\numberwithin{equation}{section}
\newcommand{\titlestring}{Complex Random Energy Model: Zeros and Fluctuations}
\newcommand{\subjectstring}{Primary: 60G50; Secondary: 82B44, 60E07, 30B20, 60F05, 60F17, 60G15}
\newcommand{\keywordsstring}{random energy model, sums of random exponentials,
zeros of random analytic functions, central limit theorem, extreme value
theory, stable distributions, logarithmic potentials}
\renewcommand{\P}{\mathbb {P}}
\newcommand{\E}{\mathbb {E}}
\newcommand{\T}{\mathbb {T}}
\newcommand{\GGG}{\mathbb {G}}
\newcommand{\WWW}{\mathbf {W}}
\newcommand{\ZZZ}{\mathcal {Z}}
\newcommand{\MMM}{\mathbb {M}}
\newcommand{\C}{\mathbb {C}}
\newcommand{\R}{\mathbb {R}}
\newcommand{\Z}{\mathbb {Z}}
\newcommand{\N}{\mathbb {N}}
\newcommand{\dd}{{\rm d}}
\newcommand{\nn}{{\textbf n}}
\newcommand{\ee}{{\rm e}}
\DeclareMathOperator{\var}{Var}
\DeclareMathOperator{\Var}{Var}
\DeclareMathOperator{\corr}{Corr}
\renewcommand{\Re}{\operatorname{Re}}
\renewcommand{\Im}{\operatorname{Im}}
\newcommand{\eps}{\varepsilon}
\newcommand{\eqdistr}{\stackrel{{d}}{=}}
\newcommand{\todistr}{\toweak}
\newcommand{\todistrnoN}{\toweaknoN}
\newcommand{\toprobabnoN}{\overset{{P}}{\underset{}\longrightarrow}}
\newcommand{\toweaknoN}{\overset{{w}}{\underset{}\longrightarrow}}
\newcommand{\toweak}{\overset{{w}}{\underset{N\to\infty}\longrightarrow}}
\newcommand{\toprobab}{\overset{{P}}{\underset{N\to\infty}\longrightarrow}}
\newcommand{\bsl}{\backslash}
\newcommand{\ind}{\mathbbm{1}}
\newtheorem{theorem}{Theorem}[section]
\newtheorem{corollary}[theorem]{Corollary}
\newtheorem{lemma}[theorem]{Lemma}
\newtheorem{proposition}[theorem]{Proposition}
\theoremstyle{remark}
\newtheorem{remark}[theorem]{Remark}
\begin{document}

\selectlanguage{english}

\begin{center}
\LARGE{\titlestring}
\end{center}
\vskip1cm
\noindent{\large Zakhar~Kabluchko}
\\
\textit{
Institute of Stochastics, Ulm University, Germany
}
\\
e-mail: \texttt{zakhar.kabluchko@uni-ulm.de}
\vskip0.5cm
\noindent{\large Anton~Klimovsky}\footnote{Research supported in part by the European
Commission (Marie Curie fellowship, project PIEF-GA-2009-251200).}
\\
\textit{
Mathematical Institute, Leiden University, The Netherlands}
\\
e-mail: \texttt{ak@aklimovsky.net}
\vskip0.5cm
\begin{center}
\today
\end{center}
\vskip0.5cm \noindent \textbf{AMS 2000 Subject Classification:} \subjectstring.
\vskip0.5cm \noindent \textbf{Key words:} \keywordsstring.
\vskip0.5cm
\begin{center}
\textbf{Abstract}
\end{center}

\noindent The partition function of the random energy model at inverse temperature $\beta$
is a sum of random exponentials $ \ZZZ_N(\beta)=\sum_{k=1}^N \exp(\beta \sqrt{n}
X_k)$, where $X_1,X_2,\ldots$ are independent real standard normal random
variables (= random energies), and $n=\log N$. We study the large $N$ limit of
the partition function viewed as an analytic function of the complex variable
$\beta$. We identify the asymptotic structure of complex zeros of the partition
function confirming and extending predictions made in the theoretical physics
literature. We prove limit theorems for the random partition function at complex
$\beta$, both on the logarithmic scale and on the level of limiting
distributions. Our results cover also the case of the sums of independent
identically distributed  random exponentials with any given correlations between
the real and imaginary parts of the random exponent.

\section{Introduction}

Since the pioneering work of Lee and Yang~\cite{lee_yang1,lee_yang2}, much
attention in the statistical physics literature has been paid to studying
partition functions of various models at \textit{complex} values of parameters
such as complex inverse temperature or complex external magnetic
field, see, e.g.,~\cite{BiskupEtAl2000,BenaEtAl2005}. These studies are
sometimes referred to as the Lee--Yang program. The motivation here is to
identify the mechanisms causing phase transitions of the model under study.
These transitions manifest themselves in the analyticity breaking of the
logarithm of the partition function which, in turn, is related to the
\emph{complex zeros} of the partition function. Phase transitions are thus
associated with the  accumulation points  of the complex zeros of the partition
function on the real axis, in the large system limit. In this respect,
complex-valued parameters provide a clean framework for
identification of phase transitions.

The main emphasis of Lee--Yang program was on the classical lattice
models of statistical mechanics. In this work, we focus on the simplest model
of a spin glass~\cite{talagrand_book2,BovierBook2006}: the \textit{random energy
model} (REM) introduced by Derrida~\cite{Derrida1980,Derrida1981}. Let
$X,X_1,X_2,\ldots$ be independent real standard normal random variables. 
The partition function of
REM at inverse temperature $\beta$ is defined by
\begin{equation}\label{eq:def_ZZZ_N}
\ZZZ_N(\beta)=\sum_{k=1}^N \ee^{\beta \sqrt n X_k}.
\end{equation}
Here, $N$ is a large integer, and we use the notation $n=\log N$. For
\textit{real} inverse temperature $\beta>0$ the asymptotic behavior of
$\ZZZ_N(\beta)$ as $N \to \infty$ (or equivalently, as $n \to \infty$) has been
extensively studied in the literature;
see~\cite{eisele,olivieri_picco,bovier_kurkova_loewe,
ben_arous_bogachev_molchanov,BovierBook2006}. Specifically, the limiting log-partition function
is given by the formula
\begin{equation}\label{eq:limiting_log_partition_function_real}
p(\beta)
:=\lim_{N \to \infty}
\frac{1}{n}
\log | \ZZZ_N(\beta)|=
\begin{cases}
1+\frac{1}{2}\beta^2,
&0\leq \beta\leq \sqrt2,\\
\sqrt 2\beta,
&\beta\geq \sqrt 2.
\end{cases}
\end{equation}
Convergence \eqref{eq:limiting_log_partition_function_real} holds both a.s.\ and in $L^q$, $q\geq 1$; see~\cite{eisele,olivieri_picco,BovierBook2006}. On the level of fluctuations, it
has been shown in~\cite{bovier_kurkova_loewe} that  upon suitable rescaling, the
random variable $\ZZZ_N(\beta)$ becomes asymptotically Gaussian, for $\beta \leq
\sqrt{2}/2$, whereas, for $\beta > \sqrt{2}/2$, it has limiting stable
non-Gaussian distribution, as $N \to \infty$.

\begin{figure}[htbp]
\includegraphics[width=0.55\textwidth]{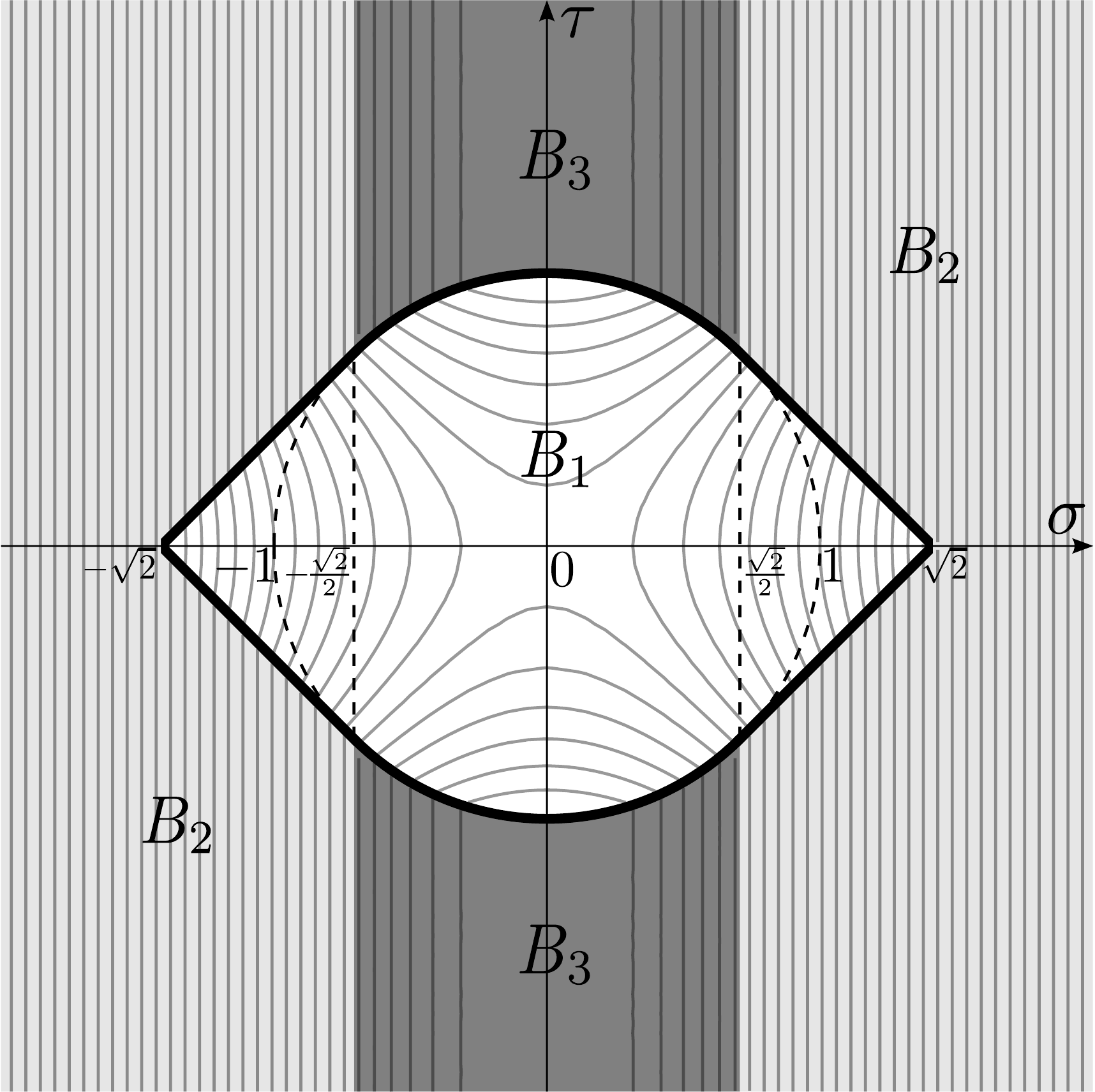}
\caption
{\small
Complex zeros of $\ZZZ_N$ in the large $N$ limit. There are three phases: $B_1$ (white, no zeros), $B_2$ (light gray, density of zeros is of order $1$), $B_3$ (dark gray, density of zeros is of order $n$). On the boundary of $B_1$ the linear density of zeros is of order $n$. The plot shows also the contour lines (gray curves and lines) of the log-partition function $p$.
} \label{fig}
\end{figure}

Using heuristic
arguments,
\citet{DerridaComplexREM1991}
studied the REM at \textit{complex} inverse temperature $\beta=\sigma+i\tau$.
He derived the following logarithmic asymptotics extending~\eqref{eq:limiting_log_partition_function_real} to the complex plane:
\begin{equation}\label{eq:limiting_log_partition_function}
p(\beta)
:=\lim_{N \to \infty}
\frac{1}{n}
\log | \ZZZ_N(\beta)|=
\begin{cases}
1+\frac{1}{2}(\sigma^2 - \tau^2),
&\beta \in \overline{B}_{1},\\
\sqrt{2} |\sigma|,
&\beta \in \overline{B}_{2},\\
\frac 12 +\sigma^2,
&\beta \in \overline{B}_{3},
\end{cases}
\end{equation}
where $B_{1}$, $B_{2}$, $B_{3}$ are three subsets of the complex plane (see Figure~\ref{fig}) defined by
\begin{align}
B_{1}
&=
\C\bsl \overline{B_2\cup B_3},
\label{eq:phases1}\\
B_{2}
&=
\{\beta\in\R^2 \colon 2\sigma^2 >  1, |\sigma|+|\tau| > \sqrt {2}\},
\label{eq:phases2}\\
B_{3}
&=
\{ \beta \in \R^2 \colon 2\sigma^2 < 1, \sigma^2+\tau^2> 1\}.
\label{eq:phases3}
\end{align}
Here,  $\bar{A}$ denotes the closure of the set $A$. Note that the limiting log-partition function $p$ is  continuous.

To derive~\eqref{eq:limiting_log_partition_function}, \citet{DerridaComplexREM1991} used an approach which can be roughly described as follows.
Instead of $\ZZZ_N(\beta)$, one can consider the truncated sum
$$
\ZZZ_N^{*}(\beta)=\sum_{k=1}^N \ee^{\beta \sqrt n X_k}\ind_{|X_k|<\sqrt{2n}}.
$$
Indeed, with high probability it holds that $\ZZZ_N^{*}(\beta)=\ZZZ_N(\beta)$
since the order of the maximum term among $|X_1|,\ldots,|X_N|$ is $\sqrt{2n}$
and the existence of an outlier satisfying $|X_k|>\sqrt{2n}$ has probability
converging to $0$. Note, however, that although $\ZZZ_N^{*}(\beta)$ and
$\ZZZ_N(\beta)$ are close in probability, their expectations (and standard
deviations) may be very different from each other, at least for some values of
$\beta$. Derrida derived an asymptotic formula for the expectation of
$\ZZZ_N^{*}(\beta)$, as $N\to\infty$, using the saddle-point method. Two cases
are possible: the expectation is dominated by the energies $X_k$ inside the
interval $(-\sqrt {2n}, \sqrt {2n})$ (equivalently, the contribution of the
saddle point dominates the expectation), or by the energies located near one of
the boundary points $\pm\sqrt{2n}$.  He also obtained two similar cases for the
standard deviation of $\ZZZ_N^{*}(\beta)$. Comparing the resulting four
formulas, Derrida discovered the three phases $B_1,B_2,B_3$. The arguments
of~\citet{DerridaComplexREM1991} are not fully rigorous, although it should be
emphasized that he did not use the replica method or other standard non-rigorous
spin glass method. In the present paper, we will make the argument of Derrida
rigorous and refine his results by deriving distributional limit theorems for
the fluctuations of $\ZZZ_N(\beta)$ (and for the fluctuations in some more
general models, see~Section~\ref{subsec:fluct}) at complex $\beta$. An essential
feature of the REM at complex temperature is the possibility of canceling of
terms in $\ZZZ_N(\beta)$ due to the presence of complex amplitudes. It is for
this reason that some standard techniques of rigorous spin glass
theory~\cite{talagrand_book2} like the concentration inequalities or the
second-moment method do not (or do not always) lead to the desired result. These
difficulties will be discussed in more detail in
Section~\ref{subsec:proof_theo_log_scale}.

Based on his formula~\eqref{eq:limiting_log_partition_function} for the limiting
log-partition function, \citet{DerridaComplexREM1991} computed the asymptotic
distribution of zeros of $\ZZZ_N$ in the complex plane. His predictions were in
a good agreement with the numerical simulations of~\citet{moukarzel_parga}.
Derrida observed that since
 $\ZZZ_N(\beta)$ is an analytic function of
$\beta$, its empirical distribution of zeros (a measure assigning to every zero
weight $1$, with multiplicities) is given by $\frac 1 {2\pi} \Delta
\log|\ZZZ_N|$, where $\Delta=\frac{\partial^2}{\partial
\sigma^2}+\frac{\partial^2}{\partial \tau^2} $ denotes the Laplace operator in
the $\beta$-plane. Taking the large $N$ limit, Derrida obtained the formula
$\frac {n}{2\pi} \Delta p$ for the asymptotic distribution of zeros of $\ZZZ_N$.
Since the function $p$ is harmonic in $B_1$ and $B_2$, Derrida predicted that
``there should be no zeros (or at least the density of zeros vanishes) in phases
$B_1$ and $B_2$''. In phase $B_3$,  ``the density of zeros is uniform'' and is
asymptotic to $\frac{n}{2\pi}$.  Also, since the normal derivative of $p$ has a
jump on the boundary of $B_1$, but has no jump on the boundary between $B_1$ and
$B_3$ ``the boundaries between phases $B_1$ and $B_2$, and between phases $B_1$
and $B_3$ are lines of zeros whereas the separation between phases $B_2$ and
$B_3$ is not''. The argument of Derrida involves interchanging the Laplace
operator and the large $N$ limit. In the present paper we justify Derrida's
approach rigorously and derive further results on the distribution of zeros of
$\ZZZ_N$. Namely, we relate the zeros of $\ZZZ_N$ to the zeros of two random
analytic functions: a Gaussian analytic function $\GGG$ (in phase $B_3$), and a
zeta-function $\zeta_P$ associated to the Poisson process (in phase $B_2$).
Also, we will clarify the local structure of the mysterious ``lines of zeros''
on the boundary of $B_1$.

For the partition function of REM, considered as a function of a complex
external magnetic field, a non-rigorous analysis similar to that
of~\citet{DerridaComplexREM1991} has been carried out
by~\citet{moukarzel_parga_analytic,moukarzel_parga_magnetic}.  For directed
polymers with complex weights on a tree, which is another related model, the
logarithmic asymptotics~\eqref{eq:limiting_log_partition_function} has been
derived in~\cite{DerridaEvansSpeer1993}; see also~\cite{biggins,koukiou}.
Recently, \citet{takahashi} and~\citet{obuchi_takahashi} studied the complex
zeros in the generalized REM and other spin glass models using the non-rigorous
replica method.  However, spin glasses at complex temperature have not been much
studied rigorously in the mathematics literature. Our aim is to fill this gap.

Substantial motivation for the setup with complex random energies comes
from quantum mechanics. There, the sums of random exponentials with
complex-valued exponents arise naturally in the models of interference in
inhomogeneous media~\cite{DerridaEvansSpeer1993,DobrinevskiLeDoussalWiese2011},
and in the studies of the quantum Monte~Carlo method~\cite{DuringKurchan2010}.

The sum of random exponentials $\ZZZ_N$ is a natural random analytic function
exhibiting, despite of its simple form, a rather non-trivial behavior. We hope
that the methods developed to study this function can be applied to other random
analytic functions, for example to random polynomials or random Taylor series.
For a recent work in this direction, we refer
to~\cite{kabluchko_zaporozhets,kabluchko12}. Also,  $\ZZZ_N$ can be interpreted
as a (normalized) characteristic function of the i.i.d.\ normal sample
$X_1,\ldots,X_N$. This connection will be discussed in
Section~\ref{sec:extensions}.

The paper is organized as follows.  After introducing some notation in
Section~\ref{subsec:notation}, we state our results on zeros and fluctuations in
Sections~\ref{subsec:zeros} and~\ref{subsec:fluct}, respectively.  Proofs can be
found in Sections~\ref{sec:proofs_fluct} and~\ref{sec:proofs_zeros}. In
Section~\ref{sec:extensions}, we discuss possible extensions and open problems
related to our results.

\section{Statement of results}\label{sec:results}
\subsection{Notation}\label{subsec:notation}
We will write the complex inverse temperature $\beta$ in the
form $\beta=\sigma+i\tau$, where $\sigma,\tau\in\R$. We use the notation $n=\log
N$, where $N$ is a large integer and the logarithm is natural. Note that in the
physics literature on the REM, it is customary to take the logarithm at basis
$2$. Replacing $\beta$ by $\beta/\sqrt {\log 2}$ in our results we can easily
switch to the physics notation.

We denote by $N_{\R}(0,s^2)$ the real Gaussian
distribution with mean zero and variance $s^2>0$. By $N_{\C}(0,s^2)$, we denote the complex Gaussian distribution with density
$$
z\mapsto \frac 1{\pi s^2} \ee^{-|z/s|^2}
$$
w.r.t.\ the Lebesgue measure on $\C$. Note that $Z\sim N_{\C}(0,s^2)$ iff $Z=X+iY$, where $X,Y\sim
N_{\R}(0,s^2/2)$ are independent. In this case, $\E Z=0$ and $\E |Z|^2=1$.  Real or complex normal distribution is referred to as standard if
$s=1$. The standard normal distribution function is denoted by $\Phi$.

Convergence in probability and weak (distributional) convergence will be denoted by $\toprobabnoN$ and $\toweaknoN$, respectively. Let $C$ be a generic positive constant whose value will change at different occurrences.

\subsection{Results on zeros}\label{subsec:zeros}
Let $\ZZZ_N$ be the partition function of the REM defined as in~\eqref{eq:def_ZZZ_N}. Note the distributional equalities
\begin{align}
\label{eq:symmetries}
\ZZZ_N(\beta)\eqdistr\ZZZ_N(-\beta), \;\;\;
\ZZZ_N(\overline{\beta})\eqdistr\overline{\ZZZ_N(\beta)}.
\end{align}
Due to \eqref{eq:symmetries}, it is often enough to consider the case $\sigma, \tau \geq 0$.
The next result describes the global structure of complex zeros of $\ZZZ_N$, as
$N\to\infty$.
Let $\Xi_3$ be the Lebesgue measure restricted to $B_3$. Also, let $\Xi_{13}$ be
the one-dimensional length measure on the boundary between $B_1$ and $B_3$
(which consists of two circular arcs). Finally, let $\Xi_{12}$ be a measure
having the density $\sqrt 2 |\tau|$ with respect to the one-dimensional length
measure restricted to the boundary between  $B_1$ and $B_2$ (which consists of
four line segments). Define a measure $\Xi=2\Xi_3+\Xi_{12}+\Xi_{13}$.

\begin{theorem}\label{theo:zeros_in_B3}
For every continuous function $f\colon\C\to \R$ with compact support,
\begin{equation}\label{eq:theo:zeros_in_B3}
\frac{1}{n} \sum_{\beta\in \C \colon \ZZZ_N(\beta)=0} f(\beta)
\toprobab
\frac{1}{2\pi}\int_{\C} f(\beta) \Xi(\dd \beta).
\end{equation}
\end{theorem}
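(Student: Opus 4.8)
The plan is to use the standard potential-theoretic principle that convergence of the empirical measure of zeros of a sequence of random analytic functions follows from convergence of the associated logarithmic potentials $\frac1n\log|\ZZZ_N|$ in an appropriate topology, together with a local uniform integrability condition that prevents mass escaping to infinity or concentrating spuriously. Concretely, for a test function $f\in C_c(\C)$ with support $K$, we integrate by parts:
\[
\frac1n\sum_{\beta\colon\ZZZ_N(\beta)=0} f(\beta)
=\frac1{2\pi n}\int_{\C} (\Delta f)(\beta)\,\log|\ZZZ_N(\beta)|\,\dd\beta,
\]
using that $\frac1{2\pi}\Delta\log|\ZZZ_N|$ is the empirical measure of zeros. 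So it suffices to show that $\frac1n\log|\ZZZ_N|\to p$ in $L^1_{\mathrm{loc}}(\C)$ in probability, and then verify that $\frac1{2\pi}\Delta p$ (understood in the distributional sense) equals $\frac1{2\pi}\Xi$. The latter is essentially Derrida's computation made rigorous: $p$ is harmonic in the open phases $B_1$ and $B_2$ (so $\Delta p=0$ there), $p(\beta)=\frac12+\sigma^2$ in $B_3$ gives $\Delta p=2$ (the factor $2$ explaining the coefficient $2\Xi_3$), and the distributional Laplacian picks up a singular part supported on the phase boundaries equal to the jump in the normal derivative of $p$ — this jump vanishes on the $B_2$–$B_3$ interface, equals $1$ (times length measure) on the $B_1$–$B_3$ interface, and equals $\sqrt2|\tau|$ on the $B_1$–$B_2$ interface. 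Carrying out this boundary-derivative bookkeeping carefully, matching the four line segments and two circular arcs, yields $\Delta p=\Xi$ as a measure.

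The real content, therefore, is the $L^1_{\mathrm{loc}}$ convergence $\frac1n\log|\ZZZ_N|\to p$ in probability. Pointwise convergence in probability, $\frac1n\log|\ZZZ_N(\beta)|\toprobab p(\beta)$ for each fixed $\beta$, is precisely the rigorous version of Derrida's logarithmic asymptotics~\eqref{eq:limiting_log_partition_function}, which I would expect to be established in the fluctuation sections of this paper (Sections~\ref{subsec:fluct} and~\ref{subsec:proof_theo_log_scale}): in phases $B_2$ and $B_3$ one has distributional limit theorems (stable laws coming from the Poisson structure of large energies in $B_2$, Gaussian-type limits in $B_3$) whose $\log|\cdot|$ is tight around $np(\beta)$, while in $B_1$ one needs the first-moment/second-moment control. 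To upgrade pointwise convergence to $L^1_{\mathrm{loc}}$, I would dominate: the easy half is the upper bound $\frac1n\log|\ZZZ_N(\beta)|\le\frac1n\log\sum_k\ee^{\sigma\sqrt n X_k}$, whose expectation over a compact $K$ is controlled by $\max_k X_k\le(1+o(1))\sqrt{2n}$, giving a uniform-in-$N$ integrable majorant on $K$. The delicate half is a uniform integrability bound from below on $\int_K(\log|\ZZZ_N|)^-\,\dd\beta$, i.e. ruling out that $\log|\ZZZ_N|$ is very negative on a non-negligible set — here the cancellation of complex terms is exactly the obstacle flagged in the introduction. For this one uses that $\ZZZ_N$ is analytic: by Jensen's formula (or the sub-mean-value property of $\log|\ZZZ_N|$ applied on small disks), $\int_K(\log|\ZZZ_N|)^-\,\dd\beta$ can be bounded in terms of the number of zeros in a slightly larger compact times a logarithmic factor, and the expected number of zeros in a compact set is $O(n)$ by an explicit first-moment (Edelman–Kostlan / Kac–Rice) estimate on $\E\,\#\{\beta\in K'\colon\ZZZ_N(\beta)=0\}$, which in turn follows from $\E|\ZZZ_N(\beta)|^2$ and $\E\log|\ZZZ_N(\beta)|$ bounds. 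Combining: $\{\frac1n\log|\ZZZ_N|\}_N$ is uniformly integrable on every compact set, so pointwise convergence in probability plus uniform integrability yields $L^1_{\mathrm{loc}}$ convergence in probability, and the integration-by-parts identity then gives~\eqref{eq:theo:zeros_in_B3}.

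The main obstacle I anticipate is precisely the lower bound / uniform-integrability step: showing $\frac1n\int_K(\log|\ZZZ_N(\beta)|)^-\,\dd\beta$ is uniformly small on average. The analyticity trick reduces it to counting zeros, but controlling $\E\,\#\{\text{zeros in }K\}$ uniformly requires care near the phase boundaries — particularly on $\partial B_1$, where the density of zeros is itself of order $n$, so one must confirm the first-moment bound is $O(n)$ with the right constant and does not blow up. A secondary subtlety is that on the boundary curves $p$ is only Lipschitz, not smooth, so the integration by parts $\int\Delta f\cdot p$ must be interpreted with $p$'s distributional Laplacian; this is harmless (one approximates $f$, or mollifies $p$ from each side and passes to the limit) but needs to be stated cleanly so that the singular boundary contributions $\Xi_{12}$ and $\Xi_{13}$ emerge with the stated densities.
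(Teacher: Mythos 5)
Your overall scheme is the same as the paper's: pass to $\frac{1}{n}\log|\ZZZ_N|$ via the identity
\[
\sum_{\beta\colon\ZZZ_N(\beta)=0} f(\beta)=\frac1{2\pi}\int_{\C}\log|\ZZZ_N(\beta)|\,\Delta f(\beta)\,\lambda(\dd\beta),
\]
identify the distributional Laplacian $\Delta p=\Xi$ by Green's identity on each phase (including the normal-derivative jumps on $\partial B_1$), and reduce the statement to $\int_{\C} (p_N-p)\Delta f\,\lambda(\dd\beta)\toprobab 0$, which is established by combining the pointwise convergence $\E|p_N(\beta)-p(\beta)|\to 0$ from Theorem~\ref{theo:log_scale} with a dominated-convergence / uniform-integrability argument. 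The paper proves this last convergence in $L^1$ directly, splits $f$ to use Theorem~\ref{theo:no_zeros_B1} for the piece near $\beta=0$, and then invokes Remark~\ref{rem:proof_zeros_B3_dominated_conv}. So far you and the paper agree.

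The genuine divergence is in how one controls the ``negative part'' of $\log|\ZZZ_N|$, i.e.\ proves that $p_N$ is not wildly negative on a non-negligible set. The paper does this probabilistically inside the proof of $L^q$-convergence of $p_N$: a smoothing trick (adding an independent $U_r$ uniform on a small disk) reduces the problem to an anti-concentration estimate
$\P[\ee^{\sqrt n(\sigma X+i\tau Y)}\in B_r(t)]\le Cr^{\eps/20}$,
which gives an exponential tail bound $\P[p_N(\beta)\le -k]\le C\ee^{-\eps k n/20}$ and hence the uniform pointwise bound $\E|p_N(\beta)|\le C$ recorded in Remark~\ref{rem:proof_zeros_B3_dominated_conv}. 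You instead propose the potential-theoretic route: bound $\int_K(\log|\ZZZ_N|)^-$ via Jensen's formula / Cartan's estimate, controlling the zero count from the sup-norm of $\log|\ZZZ_N|$ (which has a clean Gaussian-type tail coming from $\max_k|X_k|$) and the deterministic anchor $\ZZZ_N(0)=N$, hence $\log|\ZZZ_N(0)|=n$. That route is workable — in fact having a deterministic non-zero anchor at $0$ is exactly what makes Jensen's formula bite, and it also handles the neighborhood of $\beta=0$ for free, without the paper's splitting via Theorem~\ref{theo:no_zeros_B1} — but note that it gives an $L^1(K)$ bound on $p_N$ rather than the pointwise $\E|p_N(\beta)|\le C$; both suffice for dominated convergence.

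One concrete flaw: the Edelman--Kostlan / Kac--Rice suggestion as you phrase it is circular. You say the expected zero count ``follows from $\E|\ZZZ_N(\beta)|^2$ and $\E\log|\ZZZ_N(\beta)|$ bounds,'' but finiteness and regularity of $\E\log|\ZZZ_N(\beta)|$ is exactly the uniform-integrability fact you are in the middle of proving, and the Edelman--Kostlan intensity $\frac1{2\pi}\Delta\,\E\log|\ZZZ_N(\beta)|$ is essentially the quantity you want to compute. You should drop that clause entirely and rely solely on the Jensen argument: $\E\,\#\{\text{zeros in }D(0,R)\}\le\frac{1}{\log 2}\E\bigl[\sup_{D(0,2R)}\log|\ZZZ_N|-\log|\ZZZ_N(0)|\bigr]=O(n)$, which needs no first-moment formula for the zero intensity. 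With that correction the proposal is sound and constitutes a legitimate alternative to the paper's anti-concentration argument, trading a delicate probabilistic estimate for a classical complex-analysis one.
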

\begin{remark}
As a consequence, the random measure assigning a weight $1/n$ to each zero of
$\ZZZ_N$ converges weakly to the deterministic measure $\frac 1 {2\pi} \Xi$. The
limit measure $\Xi$ is related to the limiting log-partition function $p$,
see~\eqref{eq:limiting_log_partition_function}, by the formula $\Xi=\Delta p$,
in accordance with~\cite{DerridaComplexREM1991}. Here, $\Delta$ is the Laplace
operator which should be understood in the \textit{distributional} sense. The
\textit{point-wise} Laplacian of $p$ is easily seen to be $2 \Xi_3$. However, in
the distributional Laplacian there are additional terms which come from the
jumps of the normal derivative of
$p$ along the boundaries $\bar B_1\cap \bar B_2$ and $\bar B_1\cap \bar B_3$. On
the boundary $\bar B_2\cap \bar B_3$ the jump turns out to be $0$. 
that $p$ can be viewed as the two-dimensional electrostatic potential generated
by the charge distribution $\Xi$.
\end{remark}

Theorem~\ref{theo:zeros_in_B3} makes the last formula
in~\cite{DerridaComplexREM1991} rigorous. In the next theorems, we will
investigate more fine properties of the zeros of $\ZZZ_N$.  We start by
describing  the local structure of zeros of $\ZZZ_N$ in a neighborhood of area
$1/n$ of a fixed point $\beta_0\in B_3$.  Let $\{\GGG(t) \colon t\in\C\}$ be a 
Gaussian  random analytic function~\cite{nazarov_sodin} given by
\begin{equation}\label{eq:def_GGG}
\GGG(t)=\sum_{k=0}^{\infty} \xi_k \frac{t^k}{\sqrt{k!}},
\end{equation}
where $\xi_0,\xi_1,\ldots$ are independent standard complex Gaussian random
variables. The complex zeros of $\GGG$ form a remarkable point process which has
intensity $1/\pi$  and is translation invariant. Up to rescaling, this is the
only translation invariant zero set of a Gaussian analytic function;
see~\cite[Section~2.5]{peres_etal_book}. This and related zero sets have been
much studied;  see the monograph~\cite{peres_etal_book}.

\begin{theorem}\label{theo:local_zeros_small_beta}
Let $\beta_0\in B_3$ be fixed. For every continuous function $f\colon \C\to\R$
with compact support,
$$
\sum_{\beta\in \C \colon \ZZZ_N(\beta)=0} f(\sqrt n (\beta - \beta_0))
\todistr
\sum_{\beta\in\C \colon \GGG(\beta)=0} f(\beta).
$$
\end{theorem}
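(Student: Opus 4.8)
The plan is to reduce the local statement to a statement about convergence of random analytic functions, and then invoke the standard fact (Hurwitz-type theorem; see, e.g., \cite{peres_etal_book}) that zero-counting measures are continuous under locally uniform convergence of analytic functions, \emph{provided} the limit is not identically zero. Concretely, fix $\beta_0\in B_3$, and for $t$ in a large disc $\{|t|\le R\}$ set $F_N(t)=\ZZZ_N(\beta_0+t/\sqrt n)$. Writing $\beta_0=\sigma_0+i\tau_0$, a Taylor expansion of the exponent gives $\beta(\beta_0+t/\sqrt n)\sqrt n X_k = \beta_0\sqrt n X_k + t X_k$, so that
\begin{equation*}
F_N(t)=\sum_{k=1}^N \ee^{\beta_0 \sqrt n X_k}\ee^{t X_k}.
\end{equation*}
The idea is that the sum is dominated by the (order $n$ many) terms $X_k$ lying in a window of width $O(1/\sqrt n)$ around the maximizing location $x_*=\sqrt 2\,\Re\beta_0/|\beta_0|^2$ predicted by Derrida's saddle-point analysis (this is exactly the regime that produces phase $B_3$, where $2\sigma_0^2<1$). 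For such $X_k$ one has $X_k=x_*\sqrt n + O(1)$, so $\ee^{tX_k}=\ee^{t x_*\sqrt n}\ee^{t O(1)}$; after factoring out the deterministic scalar $\ee^{t x_* \sqrt n}$ (which does not affect zeros), one is left with a sum of the form $\sum_k \ee^{\beta_0\sqrt n X_k - (\text{something})}\ee^{t\,(X_k - x_*\sqrt n)}$ over the relevant window. The central limit theorem (in the complex-amplitude form that underlies Derrida's $B_3$ fluctuation result) should then show that this sum, suitably normalized, converges in distribution, as a random analytic function on $\{|t|\le R\}$, to a Gaussian analytic function whose covariance kernel is $\E[\GGG_{B_3}(s)\overline{\GGG_{B_3}(t)}] = c\,\ee^{\sigma^2_{\text{loc}} s\bar t}$ for an appropriate positive constant; rescaling $t$ by a real positive constant turns this into the flat kernel $\ee^{s\bar t}$, i.e.\ into the planar GAF $\GGG$ of \eqref{eq:def_GGG}. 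Since the zero set of $\GGG$ is translation invariant with intensity $1/\pi$, rescaling $t$ by a positive real constant leaves the zero process distributionally unchanged, and the limit process is the one in the statement.

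The steps, in order, are: (i) truncate $\ZZZ_N$ to $\ZZZ_N^*$ (energies with $|X_k|<\sqrt{2n}$), which agrees with $\ZZZ_N$ with probability $\to 1$ and hence does not change the limit of the zero process; (ii) on the scale $t/\sqrt n$, localize further to energies in a $O(1/\sqrt n)$-window around $x_*\sqrt n$, showing the complementary energies contribute negligibly \emph{uniformly in $t$ on compacts}, so that after normalization $F_N$ is within $o_P(1)$ (uniformly on $\{|t|\le R\}$) of a truncated sum; (iii) establish convergence of the finite-window normalized sum to the Gaussian analytic function, as a process, by checking convergence of finite-dimensional distributions (a multivariate CLT with complex amplitudes) together with tightness in the space of analytic functions (e.g.\ via a uniform second-moment bound on $\{|t|\le R\}$ and Montel's theorem); (iv) identify the covariance kernel and rescale $t$ to reach $\GGG$; (v) apply the continuity of the zero functional under locally uniform convergence, using that the Gaussian limit is a.s.\ not identically zero and that its zeros a.s.\ do not lie on $\{|t|=R\}$, then let $R\to\infty$ to upgrade from test functions supported in $\{|t|<R\}$ to arbitrary compactly supported $f$.

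The main obstacle is step (iii): because the amplitudes $\ee^{\beta_0\sqrt n X_k}$ are complex, massive cancellation can occur, and the naive second-moment method does not control the size of $F_N$ from below — this is precisely the difficulty the authors flag in the introduction as distinguishing the complex-temperature regime. One must therefore run the CLT carefully: after conditioning on the energies in the localizing window (equivalently, after the usual representation of the near-maximal energies via an order statistics / Poisson structure), the amplitudes are random but, crucially in $B_3$, the sum of their squared moduli grows and no single term dominates, so Lindeberg's condition holds and the complex CLT applies conditionally; one then removes the conditioning. Controlling the error from the windowing in step (ii) uniformly in $t$ over a compact disc — rather than pointwise — is the other technical point, but it follows from a routine second-moment estimate once the exponential factor $\ee^{t(X_k-x_*\sqrt n)}$ is bounded by $\ee^{R|X_k - x_*\sqrt n|/\sqrt n}$ on $\{|t|\le R\}$ and the window is taken wide enough (growing slowly) to capture the full saddle-point mass while still keeping this factor bounded. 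Tightness in step (iii) is then immediate from the uniform $L^2$ bound via Montel's theorem, since uniform $L^2$ control on a slightly larger disc yields uniform boundedness in probability on $\{|t|\le R\}$ and hence a normal family.
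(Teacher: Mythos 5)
Your high-level architecture is exactly right and matches the paper: reduce the local zero statement to weak convergence of the rescaled process $\ZZZ_N(\beta_0+t/\sqrt n)$ as a random analytic function, then apply a continuity-of-the-zero-functional lemma (the paper's Lemma~\ref{lem:random_analytic_point_proc}, which requires only $\P[U\equiv 0]=0$, automatic for the GAF). The tightness-via-second-moments-and-normal-families argument you sketch is likewise essentially the paper's Lemmas~\ref{lem:random_analytic_ineq} and~\ref{lem:random_analytic_weak_conv}.

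However, your proposed route to the process-level convergence (your steps (i)--(iii)) is more convoluted than the paper's and contains a genuine gap. The paper does \emph{not} truncate, localize to a window around a saddle point, or condition. It defines $W_{k,N}(t)=N^{-1/2}\ee^{(\beta_0\sqrt n + t)X_k-(\sigma_0\sqrt n+t)^2}$, writes $G_N(t)=\sum_k (W_{k,N}(t)-\E W_{k,N}(t))$, and applies the multidimensional Lindeberg CLT \emph{directly} to this full sum. The Lindeberg condition holds precisely because $\sigma_0^2<1/2$ (this is what phase $B_3$ is for): the normalized summands $W_{k,N}$ are uniformly small and the extreme $X_k$ contribute negligibly to the second moment, cf.~\eqref{eq:proof_lindeberg}. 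The $t$-dependent normalization $N^{\frac12+(\sigma_0+t/\sqrt n)^2}$ (which carries the $\ee^{-t^2/2}$ factor into the limit) is chosen so that $N\E[W_N(t)\overline{W_N(s)}]=\ee^{-(t-\bar s)^2/2}$ exactly, whence the limit is $\ee^{-t^2/2}\GGG(t)$ with no further rescaling of $t$ required (since $\ee^{-t^2/2}$ has no zeros, this immediately gives the zero process of $\GGG$).

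The genuine gap is the conditioning step. You write: ``after conditioning on the energies in the localizing window ... the amplitudes are random but ... the complex CLT applies conditionally; one then removes the conditioning.'' In the model of Theorem~\ref{theo:local_zeros_small_beta} one has $\rho=1$ (i.e.\ $Y_k=X_k$), so conditioning on the $X_k$ in the window determines the windowed part of $\ZZZ_N$ completely --- there is no remaining randomness to which a CLT could be applied. The concern that motivated this (controlling $|F_N|$ from below against cancellation) is also misplaced at this stage: the CLT only needs the Lindeberg condition, not a lower bound on $|F_N|$, and the lower-bound issue only surfaces at the zero-counting step, where it is resolved by the observation that the Gaussian limit is a.s.\ not identically zero. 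Two further minor issues: your saddle-point location $x_*=\sqrt 2\Re\beta_0/|\beta_0|^2$ is not the relevant quantity (the variance-dominating location is $2\sigma_0$, and in any case no explicit localization is needed); and the ``rescale $t$ by a positive constant'' step to reach the flat kernel is unnecessary once the normalization is chosen as above.
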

\begin{remark}
Equivalently, the point process consisting of the points $\sqrt n
(\beta-\beta_0)$, where $\beta$ is a zero of $\ZZZ_N$, converges weakly to the
point process of zeros of $\GGG$.
\end{remark}

\citet{DerridaComplexREM1991} predicted that the set $B_1$ should be free of
zeros. As we will see below, it is not true that the number of zeros in $B_1$
converges to $0$ in probability since with non-vanishing probability there exist
zeros very close to the boundary of $B_1$. However, a slightly weaker statement
is true.
\begin{theorem}\label{theo:no_zeros_B1}
Let $K$ be a compact subset of $B_1$. Then, there exists $\eps>0$ depending on $K$ such that
\begin{equation*}
\P[\ZZZ_N(\beta)=0, \text{ for some } \beta\in K]  =O(N^{-\eps}),\;\;\; N\to\infty.
\end{equation*}
\end{theorem}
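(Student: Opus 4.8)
The plan is to show that on a compact $K \subset B_1$, the partition function $\ZZZ_N$ is with high probability bounded away from zero, and in fact $|\ZZZ_N(\beta)|$ is comparable to its ``expected size'' $N \ee^{(\sigma^2-\tau^2)n/2}$ uniformly on $K$. The key observation is that in $B_1$ the dominant contribution to $\ZZZ_N(\beta) = \sum_{k=1}^N \ee^{\beta\sqrt n X_k}$ comes from the central part of the Gaussian sample (the energies $X_k$ with $|X_k|$ of order $\sqrt n$ or smaller), where no cancellation can make the sum collapse, while the boundary energies $X_k \approx \pm\sqrt{2n}$ — the ones responsible for the phase transition into $B_2$ — give a negligible contribution when $\beta$ is strictly inside $B_1$. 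Concretely, I would first establish that $\E[\ZZZ_N(\beta)] = N \ee^{\beta^2 n/2} = \ee^{(1 + \frac12(\sigma^2-\tau^2) + i\sigma\tau)n}$, so $|\E \ZZZ_N(\beta)| = \ee^{p(\beta) n}$ with $p$ given by the first line of~\eqref{eq:limiting_log_partition_function}, and then show the fluctuations are of strictly smaller exponential order.

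The main estimate is a second-moment (variance) bound: $\Var(\ZZZ_N(\beta)) = \sum_{k=1}^N \Var(\ee^{\beta\sqrt n X_k}) \le N\, \E|\ee^{\beta\sqrt n X_k}|^2 = N \ee^{2\sigma^2 n} = \ee^{(1 + 2\sigma^2)n}$. On $B_1$ we have $1 + 2\sigma^2 < 2(1 + \frac12(\sigma^2-\tau^2))$, i.e.\ $2\tau^2 < 1$; but this inequality need not hold on all of $B_1$ (for large $|\sigma|$ the region $B_1$ does extend to $2\tau^2 > 1$), so the crude variance bound is not enough everywhere and one must truncate. Therefore I would work with $\ZZZ_N^*(\beta) = \sum_{k=1}^N \ee^{\beta\sqrt n X_k} \ind_{|X_k| \le c\sqrt n}$ for a suitable $c = c(K) < \sqrt 2$ chosen so that, uniformly for $\beta \in K$, the truncated second moment $N\,\E[\ee^{2\sigma\sqrt n X_k}\ind_{|X_k|\le c\sqrt n}]$ has exponential order strictly below $2 p(\beta)$, while the truncated first moment $N\,\E[\ee^{\beta\sqrt n X_k}\ind_{|X_k|\le c\sqrt n}]$ still has the full order $\ee^{p(\beta)n}$ (the Gaussian integral is dominated by the saddle point $X_k = \beta\sqrt n/\ldots$, which lies in the admissible range precisely because $K\subset B_1$ keeps the relevant saddle inside $(-\sqrt 2,\sqrt 2)\sqrt n$). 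Chebyshev's inequality then gives $\P[\,|\ZZZ_N^*(\beta_0)| \le \tfrac12 \ee^{p(\beta_0)n}\,] = O(N^{-\eps_0})$ for each fixed $\beta_0$.

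From a pointwise lower bound to a uniform (over $K$) statement I would use an analytic function argument rather than a union bound. Cover $K$ by $O(N)$ disks of radius $\asymp N^{-1/2}$; a standard Cauchy-estimate bound on $\ZZZ_N'$ (whose modulus is at most $C\sqrt n N \ee^{(\sigma^2-\tau^2)n/2 + o(n)}$ on a slight enlargement of $K$, which itself holds with probability $1 - O(N^{-\eps})$ by the same second-moment reasoning applied to $\ZZZ_N'$) shows that if $|\ZZZ_N|$ is large at the centers then it cannot vanish anywhere in $K$; alternatively, apply Jensen's formula / the argument principle on each small disk to conclude there are no zeros inside once $|\ZZZ_N|$ dominates its own oscillation on the disk boundary. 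Finally, replace $\ZZZ_N^*$ by $\ZZZ_N$: the event $\{\ZZZ_N^* \ne \ZZZ_N\}$ — existence of an outlier with $|X_k| > c\sqrt n$ — has probability $N \cdot \P[|X| > c\sqrt n] = O(N^{1 - c^2/2}) = O(N^{-\eps})$ since $c$ can be taken with $c^2/2 > 1$ as long as $K$ stays away from $B_2$; wait — more carefully, one only needs $c$ close enough to $\sqrt 2$ that the central-limit/saddle estimates go through, and separately one notes that outliers with $|X_k|$ between $c\sqrt n$ and $\sqrt{2n}$ contribute at most $N \ee^{\sigma c\sqrt n \sqrt n} = \ee^{(1+\sigma c)n}$ in modulus, which is still of smaller order than $\ee^{p(\beta)n}$ for $\beta$ deep in $B_1$, so these terms are harmless, and genuine outliers with $|X_k| > \sqrt{2n}$ have total probability $O(N^{-\eps})$. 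The hard part is the bookkeeping in choosing the truncation level $c(K)$: one must simultaneously keep the first moment at full exponential order, push the second moment strictly below twice that order, and control the intermediate-outlier and the far-outlier contributions — all uniformly over the compact set $K$, using the explicit geometry of $B_1$ in~\eqref{eq:phases1}--\eqref{eq:phases3}.
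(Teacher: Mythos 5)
Your overall strategy is the same as the paper's: show that the fluctuation $\ZZZ_N(\beta)-\E\ZZZ_N(\beta)$ has strictly smaller exponential order than $|\E\ZZZ_N(\beta)|=N^{1+\frac12(\sigma^2-\tau^2)}$ uniformly on $K$, using the crude second-moment bound where it works and truncation where it does not. However, there are two points worth flagging.

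First, an arithmetic slip. The condition for the crude variance bound $\Var(\ZZZ_N)\le N^{1+2\sigma^2}$ to beat $|\E\ZZZ_N|^2=N^{2+\sigma^2-\tau^2}$ is
\[
1+2\sigma^2 < 2+\sigma^2-\tau^2 \iff \sigma^2+\tau^2<1,
\]
not $2\tau^2<1$. This changes the geometric picture: the crude bound works exactly on the part of $B_1$ inside the open unit disk (which includes points with $2\tau^2>1$, e.g.\ $\sigma=0,\tau=0.9$), and it fails on the other component of $B_1$, namely $\{1/2<\sigma^2<2,\ |\sigma|+|\tau|<\sqrt2\}$, where $\sigma^2+\tau^2\ge1$ can occur. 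The paper carves up a contour around $K$ into $B_1'(\eps)=\{\sigma^2+\tau^2<1-2\eps\}$ and $B_1''(\eps)=\{(|\sigma|-\sqrt2)^2-\tau^2>2\eps,\ 1/2<\sigma^2<2\}$ accordingly.

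Second, you explicitly leave the core estimate (``the hard part is the bookkeeping in choosing the truncation level $c(K)$'') undone; the paper resolves it cleanly in a way that sidesteps the difficulties you anticipate. It truncates at exactly $\sqrt{2n}$ (not at some carefully tuned $c<\sqrt2$), and it does not compare against a truncated expectation. Instead it keeps the centered quantity $\ZZZ_N-\E\ZZZ_N$ and splits it, up to the common factor $\ee^{\sigma\sqrt2 n}$, as $\sum_k(U_{k,N}-\E U_{k,N})+\sum_k(V_{k,N}-\E V_{k,N})$ with $U_{k,N}=\ee^{\beta\sqrt n X_k-\sigma\sqrt2 n}\ind_{X_k\le\sqrt{2n}}$ and $V_{k,N}$ the complementary tail term. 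Then Lemma~\ref{lem:saddle_weak}\,(1) (valid because $2\sigma>\sqrt2$ on $B_1''$) gives the $L^2$ bound $N\E|U_{1,N}|^2<1$, and Lemma~\ref{lem:saddle_weak}\,(2) (valid because $\sigma<\sqrt2$) gives the $L^1$ bound $2N\E|V_{1,N}|<2$; hence $\E|\ZZZ_N(\beta)-\E\ZZZ_N(\beta)|\le 3\ee^{\sigma\sqrt2 n}=3N^{\sqrt2\sigma}$, which is $N^{-\eps}$ times smaller than $|\E\ZZZ_N(\beta)|$ precisely by the defining inequality of $B_1''(\eps)$. There is no need to lower-bound a truncated first moment at all, which was the part you were worried about.

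Finally, your chaining/covering argument for passing from a pointwise to a uniform-over-$K$ statement (note that a na\"{\i}ve union bound over $O(N)$ disks with a $O(N^{-\eps_0})$ tail at each does not close) is more cumbersome than necessary. Since $\ZZZ_N(\beta)/\E\ZZZ_N(\beta)$ is a random analytic function (the denominator $N\ee^{\beta^2 n/2}$ never vanishes), the paper applies Lemma~\ref{lem:random_analytic_ineq} directly:
\[
\P[\ZZZ_N=0 \text{ somewhere on } K]
\le \E\Bigl[\sup_{\beta\in K}\Bigl|\tfrac{\ZZZ_N(\beta)}{\E\ZZZ_N(\beta)}-1\Bigr|\Bigr]
\le C\oint_\Gamma \E\Bigl|\tfrac{\ZZZ_N(w)}{\E\ZZZ_N(w)}-1\Bigr|\,|\dd w|,
\]
reducing everything to the pointwise $L^1$ estimate on a contour $\Gamma\subset B_1$ surrounding $K$. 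This is the intended use of the Cauchy-type lemma set up earlier in the paper and avoids any covering argument.
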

As a consequence, the number of zeros of $\ZZZ_N$ in $K$ converges to $0$ in
probability. It is natural to conjecture that the convergence holds a.s. The
number $\eps$, as provided by the proof of Theorem~\ref{theo:no_zeros_B1},
converges to $0$ as the distance between $K$ and the boundary of $B_1$ gets
smaller. So, the a.s.\ convergence does not follow from a Borel--Cantelli
argument.

Consider now the zeros of $\ZZZ_N$ in the set $B_2$.  We will show that in the
limit as $N\to\infty$ the zeros of $\ZZZ_N$ in $B_2$ look like  the zeros of
certain random analytic function $\zeta_{P}$. This function may be viewed as a
zeta-function associated to the Poisson process. It is defined as follows. Let
$P_1<P_2<\ldots$ be the arrival times of a unit intensity homogeneous Poisson
process on the positive half-line. That is, $P_k=\eps_1+\ldots+\eps_k$, where
$\eps_1,\eps_2,\ldots$ are i.i.d.\ standard exponential random variables, i.e., $\P[\eps_k>t]=\ee^{-t}$,
$t\geq 0$. For $T>1$, define the random process
\begin{equation}\label{eq:zeta_poi_tilde}
\tilde \zeta_P(\beta; T)=\sum_{k=1}^{\infty} \frac{1}{P_k^{\beta}}\ind_{P_k\in [0,T]} -\int_1^T t^{-\beta}\dd t,
\;\;\; \beta\in\C.
\end{equation}
\begin{theorem}\label{theo:zeta_poi_exists}
With probability $1$,  the sequence $\tilde \zeta_P(\beta; T)$ converges as
$T\to\infty$  to a limiting function denoted by $\tilde \zeta_{P}(\beta)$. The
convergence is uniform on compact subsets of the half-plane $\{\beta\in \C \colon \Re
\beta>1/2\}$.
\end{theorem}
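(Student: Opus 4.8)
\medskip
\noindent\textit{Proof proposal.}
The plan is to recognise $\tilde \zeta_P(\beta;T)$, up to an entire correction that does not depend on $T$, as a compensated Poisson integral which is a martingale in the parameter $T$, and then to promote pointwise $L^2$-convergence to almost sure convergence uniformly on compact sets by exploiting analyticity in $\beta$.

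The first step is to peel off the contribution of the (almost surely finitely many) arrival times with $P_k\le 1$: for $T\ge 1$ I would write
\begin{equation*}
\tilde \zeta_P(\beta;T)=A(\beta)+M(\beta;T),\qquad A(\beta)=\sum_{k\colon P_k\le 1}P_k^{-\beta},\qquad M(\beta;T)=\int_{(1,T]}s^{-\beta}\,(\dd N(s)-\dd s),
\end{equation*}
where $N$ is the counting process of $\{P_k\}$. Here $A$ is an a.s.\ finite sum of entire functions, hence itself entire and independent of $T$, so it is enough to treat $M(\beta;T)$. For fixed $\beta$ the process $T\mapsto M(\beta;T)$ is a martingale --- it has independent, mean-zero increments over disjoint intervals --- and, by Campbell's formula,
\begin{equation*}
\E\,\abs{M(\beta;T')-M(\beta;T)}^2=\int_T^{T'}s^{-2\Re\beta}\,\dd s,\qquad 1\le T<T'.
\end{equation*}
For $\Re\beta>1/2$ this is at most $\int_T^{\infty}s^{-2\Re\beta}\,\dd s=T^{1-2\Re\beta}/(2\Re\beta-1)$, which tends to $0$ as $T\to\infty$; thus $M(\beta;\cdot)$ is $L^2$-bounded and converges a.s.\ and in $L^2$ by the martingale convergence theorem. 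The exponent $1/2$ enters precisely as the threshold for $\int_1^{\infty}s^{-2\Re\beta}\,\dd s<\infty$.

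To upgrade this to convergence uniform on a compact set $K\subset\{\beta\in\C\colon\Re\beta>1/2\}$, I would fix $\sigma_0\in(1/2,\inf_{\beta\in K}\Re\beta)$ and $\delta>0$ so small that the closed $\delta$-neighbourhood $K_\delta$ of $K$ still lies in $\{\Re\beta\ge\sigma_0\}$. For every $T$ and every $T''\ge T$ the map $\beta\mapsto M(\beta;T'')-M(\beta;T)$ is analytic, so $\beta\mapsto\abs{M(\beta;T'')-M(\beta;T)}^2$ is subharmonic; applying the solid sub-mean-value inequality over disks of radius $\delta$ and then taking the supremum over $T''\ge T$ yields, pathwise,
\begin{equation*}
\sup_{\beta\in K}\;\sup_{T''\ge T}\abs{M(\beta;T'')-M(\beta;T)}^2\;\le\;\frac{1}{\pi\delta^2}\int_{K_\delta}\;\sup_{T''\ge T}\abs{M(w;T'')-M(w;T)}^2\,\dd A(w).
\end{equation*}
Taking expectations, interchanging with the integral, and applying Doob's $L^2$ maximal inequality to the martingale $T''\mapsto M(w;T'')-M(w;T)$ together with the second-moment identity above, the right-hand side is bounded by
\begin{equation*}
\frac{4}{\pi\delta^2}\int_{K_\delta}\int_T^{\infty}s^{-2\Re w}\,\dd s\,\dd A(w)\;\le\;\frac{4\,\abs{K_\delta}}{\pi\delta^2}\cdot\frac{T^{1-2\sigma_0}}{2\sigma_0-1}.
\end{equation*}
Specialising to $T=2^m$, these bounds are summable in $m$, so by the Borel--Cantelli lemma $\sup_{\beta\in K}\sup_{T\ge 2^m}\abs{M(\beta;T)-M(\beta;2^m)}\to 0$ almost surely. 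Hence $(M(\cdot;T))_{T\ge 1}$ is a.s.\ uniformly Cauchy on $K$ as $T\to\infty$; by completeness of $C(K)$ it converges there a.s.\ uniformly, and since $K$ is arbitrary the convergence is a.s.\ uniform on compact subsets of $\{\Re\beta>1/2\}$. The limit $\tilde\zeta_P=A+\lim_{T\to\infty}M(\cdot;T)$ is then analytic there, being a locally uniform limit of analytic functions.

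The points that still need checking are routine: measurability, so that expectation and area integral may be interchanged (by right-continuity of $T\mapsto M(\beta;T)$ the inner supremum can be taken over a countable set, which makes the relevant maps jointly measurable in $(\beta,\omega)$), and the elementary estimate $\int_T^{\infty}s^{-2\Re w}\,\dd s\le T^{1-2\sigma_0}/(2\sigma_0-1)$ valid uniformly over $w\in K_\delta$. The only genuine subtlety I anticipate is controlling the supremum simultaneously in the continuous parameter $T$ and in $\beta$; this is exactly what the two soft tools above are for --- Doob's inequality in $T$ and subharmonicity of $\abs{M(\cdot;T)}^2$ in $\beta$ --- so I do not expect an essential obstacle beyond combining them correctly.
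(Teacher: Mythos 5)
Your proof is correct, and it takes a genuinely different route from the paper. The paper discretizes: it writes (modulo the $[0,1)$ contribution) $S_k(\beta)=s_1(\beta)+\cdots+s_k(\beta)$ with $s_j$ the compensated sum over $[j,j+1)$, notes $\{S_k(\beta)\}$ is an $L^2$-bounded martingale to get pointwise a.s.\ convergence, establishes tightness of $\{S_k\}$ on $C(K)$ by Cauchy-integral estimates (their Lemma~\ref{lem:random_analytic_weak_conv}), and then invokes the It\^o--Nisio theorem to convert the resulting weak convergence in $C(K)$ into a.s.\ convergence in $C(K)$. You instead work directly with the continuous-parameter compensated Poisson integral $M(\beta;T)$, and combine Doob's $L^2$ maximal inequality in $T$ with the sub-mean-value inequality for the subharmonic function $|M(\cdot;T'')-M(\cdot;T)|^2$ in $\beta$, closing with Borel--Cantelli along dyadic scales. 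This is arguably more elementary (no appeal to It\^o--Nisio) and gives you for free the a.s.\ uniform Cauchy property in the continuous parameter $T$, which the paper's discretization to integer endpoints leaves implicit. The paper's approach is shorter given that Lemma~\ref{lem:random_analytic_weak_conv} is already in place as a general tool reused elsewhere. Your treatment of measurability (reducing the inner supremum to a countable set via right-continuity in $T$ and continuity in $\beta$) and the observation that the a.s.\ finite initial block $A(\beta)=\sum_{P_k\le1}P_k^{-\beta}$ is entire and can be peeled off are both correct and necessary steps that you handle properly.
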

\begin{corollary}\label{cor:zeta_poi_exists}
With probability $1$, the Poisson process zeta-function
\begin{equation}\label{eq:zeta_poi}
\zeta_{P}(\beta)=\sum_{k=1}^{\infty} \frac{1}{P_k^{\beta}}
\end{equation}
defined originally for $\Re \beta>1$, admits a meromorphic continuation to the
domain $\Re \beta>1/2$. The function $\tilde
\zeta_{P}(\beta)=\zeta_{P}(\beta)-\frac 1 {\beta-1}$ is a.s.~analytic in this
domain.
\end{corollary}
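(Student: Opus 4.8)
The plan is to read off the corollary directly from Theorem~\ref{theo:zeta_poi_exists}, by matching the limit function $\tilde\zeta_P$ with $\zeta_P-\tfrac{1}{\beta-1}$ on the region $\Re\beta>1$ where the defining series converges, and then using Weierstrass's convergence theorem to get analyticity of $\tilde\zeta_P$ on the full half-plane $\{\Re\beta>1/2\}$.

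First I would record two elementary facts. (i) For each fixed $T>1$ the truncated function $\tilde\zeta_P(\,\cdot\,;T)$ of~\eqref{eq:zeta_poi_tilde} is entire in $\beta$: the sum $\sum_k P_k^{-\beta}\ind_{P_k\in[0,T]}$ is a.s.\ finite with each term $\ee^{-\beta\log P_k}$ entire, and $\int_1^T t^{-\beta}\dd t$ equals $(T^{1-\beta}-1)/(1-\beta)$ for $\beta\ne1$ (with value $\log T$ at $\beta=1$), which has a removable singularity at $\beta=1$. (ii) On the almost sure event where $P_k/k\to1$ (strong law of large numbers, since $\E\eps_k=1$), one has $P_k\ge k/2$ for $k$ large, so $|P_k^{-\beta}|=P_k^{-\Re\beta}\le 2^{\Re\beta}k^{-\Re\beta}$; hence the series $\sum_k P_k^{-\beta}$ converges absolutely and locally uniformly on $\{\Re\beta>1\}$ and defines there the analytic function $\zeta_P$ of~\eqref{eq:zeta_poi}.

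Next I would let $T\to\infty$ in~\eqref{eq:zeta_poi_tilde} for $\Re\beta>1$: the truncated sum tends to $\zeta_P(\beta)$ by (ii), and $\int_1^T t^{-\beta}\dd t=(1-T^{1-\beta})/(\beta-1)\to 1/(\beta-1)$ because $T^{1-\beta}\to0$ when $\Re\beta>1$. Thus $\tilde\zeta_P(\beta;T)\to\zeta_P(\beta)-\tfrac1{\beta-1}$ pointwise on $\{\Re\beta>1\}$. On the other hand, Theorem~\ref{theo:zeta_poi_exists} gives $\tilde\zeta_P(\beta;T)\to\tilde\zeta_P(\beta)$, uniformly on compact subsets of the larger half-plane $\{\Re\beta>1/2\}$; comparing the two limits on the overlap $\{\Re\beta>1\}$ forces $\tilde\zeta_P(\beta)=\zeta_P(\beta)-\tfrac1{\beta-1}$ there. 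Moreover, since $\tilde\zeta_P$ is a locally uniform limit of the entire functions $\tilde\zeta_P(\,\cdot\,;T)$, Weierstrass's theorem shows that $\tilde\zeta_P$ is analytic on all of $\{\Re\beta>1/2\}$.

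Finally I would set $\zeta_P(\beta):=\tilde\zeta_P(\beta)+\tfrac1{\beta-1}$ for $\Re\beta>1/2$; this function is meromorphic there (analytic except for a simple pole at $\beta=1$ with residue $1$) and coincides with the original series on $\{\Re\beta>1\}$, so it is the meromorphic continuation of $\zeta_P$, unique by the identity theorem, and $\tilde\zeta_P=\zeta_P-\tfrac1{\beta-1}$ is analytic on $\{\Re\beta>1/2\}$ as asserted. I do not expect a genuine obstacle here: the substantive work is entirely in Theorem~\ref{theo:zeta_poi_exists}, and the only points needing a moment's care are the removability of the singularity at $\beta=1$ in fact~(i) and the appeal to the strong law to secure absolute convergence of the series for $\Re\beta>1$ in fact~(ii).
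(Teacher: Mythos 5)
Your argument is correct and is exactly the derivation the paper leaves implicit: Corollary~\ref{cor:zeta_poi_exists} is stated without a separate proof, being read off as an immediate consequence of Theorem~\ref{theo:zeta_poi_exists}. The details you supply (strong law of large numbers for absolute convergence on $\{\Re\beta>1\}$, removability of the singularity of $\int_1^T t^{-\beta}\,\dd t$ at $\beta=1$, matching the two $T\to\infty$ limits on the overlap, and Weierstrass's theorem for analyticity of the locally uniform limit) are precisely the steps that need to be checked, and they are all correct.
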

The next theorem describes the limiting structure of zeros of $\ZZZ_N$ in $B_2$.
The form of the process $\zeta_P$ appearing there is not surprising and can be
explained as follows.  In phase $B_2$ the process $\ZZZ_N$ is dominated by the
extremal order statistics of the sample $X_1,\ldots,X_N$. These form a Poisson
point process in the large $N$ limit, see, e.g.,
\cite[Corollary~4.19(i)]{resnick_book}, and $\zeta_P$ is some functional of this
process.
\begin{theorem}\label{theo:local_zeros_large_beta}
Let $f \colon B_2\to\R$ be a continuous function with compact support. Let
$\zeta_P^{(1)}$ and $\zeta_P^{(2)}$ be two independent copies of $\zeta_P$.
Then,
$$
\sum_{\beta\in B_2 \colon \ZZZ_N(\beta)=0} f(\beta)
\todistr
\sum_{\substack{\beta\in B_2 \colon \\ \zeta_{P}^{(1)}\left(\beta / \sqrt 2\right)=0}} f(\beta)
+
\sum_{\substack{\beta\in B_2 \colon \\ \zeta_{P}^{(2)}\left(\beta / \sqrt 2 \right)=0}} f(-\beta).
$$
\end{theorem}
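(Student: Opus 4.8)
\textit{Proof strategy.} The plan is to reduce the statement about zeros to locally uniform convergence of random analytic functions and then apply Hurwitz's theorem. Since $\supp f$ is compact inside the open set $B_2\subset\{|\sigma|>1/\sqrt2\}$, which is the disjoint union of $B_2^{+}=B_2\cap\{\sigma>0\}$ and $B_2^{-}=B_2\cap\{\sigma<0\}=-B_2^{+}$, I would first write $f=f^{+}+f^{-}$ with $f^{\pm}$ continuous and compactly supported inside $B_2^{\pm}$. It then suffices to prove the \emph{joint} distributional convergence of the two sums $\sum_{\beta\colon\ZZZ_N(\beta)=0}f^{\pm}(\beta)$ to the two (independent) sums on the right-hand side; adding coordinates gives the claim, and the weight $f(-\beta)$ in the second sum appears only at the end, after reindexing $\beta\mapsto-\beta$ the contribution coming from $B_2^{-}$. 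The distributional symmetry $\ZZZ_N(-\beta)\eqdistr\ZZZ_N(\beta)$ is consistent with this but does not by itself yield the crucial \emph{independence} of the two limits.

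The core step is to show that, for the deterministic nowhere-vanishing analytic factor $c_N(\beta)=\exp(\sqrt2\,\beta n)\,(4\pi n)^{-\beta/(2\sqrt2)}$, one has jointly and in the topology of locally uniform convergence
\[
\frac{\ZZZ_N(\beta)}{c_N(\beta)}\ \todistr\ \zeta_P^{(1)}\!\left(\frac{\beta}{\sqrt2}\right)\text{ on }B_2^{+},
\qquad
\frac{\ZZZ_N(\beta)}{c_N(-\beta)}\ \todistr\ \zeta_P^{(2)}\!\left(-\frac{\beta}{\sqrt2}\right)\text{ on }B_2^{-},
\]
with $\zeta_P^{(1)},\zeta_P^{(2)}$ independent. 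The argument: order the energies $X_{(1)}\ge\dots\ge X_{(N)}$ and use the Gaussian quantile expansion $\Phi^{-1}(1-u/N)=\sqrt{2n}-\tfrac1{\sqrt{2n}}\bigl(\log u+\tfrac12\log(4\pi n)\bigr)+o(n^{-1/2})$ together with the convergence of $\bigl(N(1-\Phi(X_{(j)}))\bigr)_{j\ge1}$ to the Poisson arrival times $(P_j)_{j\ge1}$ of Theorem~\ref{theo:zeta_poi_exists} (see~\cite[Cor.~4.19(i)]{resnick_book}), so that the $j$-th largest term of $\ZZZ_N(\beta)$ is close to $c_N(\beta)\,P_j^{-\beta/\sqrt2}$. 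Summing the near-extreme terms and matching the divergent part of $\sum_{P_j\le T}P_j^{-\beta/\sqrt2}$ against the counterterm $\int_1^Tt^{-\beta/\sqrt2}\,\dd t$ of~\eqref{eq:zeta_poi_tilde} and against the law-of-large-numbers contribution of the order statistics of moderate rank lets $\Re(\beta/\sqrt2)$ run down to $1/2$ and reproduces the regularized function $\zeta_P$ of Corollary~\ref{cor:zeta_poi_exists}. The analogous computation at the bottom of the spectrum, with the smallest $X_{(N-j+1)}$ transformed via $\bigl(N\Phi(X_{(N-j+1)})\bigr)_{j\ge1}$ — which converges to a Poisson sequence \emph{independent} of $(P_j)$, by the asymptotic independence of the upper and lower extremes of an i.i.d.\ sample — gives the $B_2^{-}$ limit together with the second, independent copy.

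To pass from finite-dimensional to locally uniform convergence I would verify tightness, i.e.\ that on each compact subset of $B_2^{+}$ (resp.\ $B_2^{-}$) the suprema of $|\ZZZ_N/c_N(\beta)|$ (resp.\ $|\ZZZ_N/c_N(-\beta)|$) form a tight family of random variables, which follows from maximal bounds for the truncated partition function $\ZZZ_N^{*}$ (equal to $\ZZZ_N$ with probability tending to $1$) after splitting it into the near-extreme part and the bulk. Hurwitz's theorem then finishes the proof: locally uniform convergence of analytic functions to a limit that is a.s.\ not identically zero forces vague convergence of the zero-counting measures, which passes to convergence in distribution by continuous mapping (cf.~\cite{peres_etal_book}). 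Since $c_N(\pm\beta)$ is nowhere zero, $\ZZZ_N$ has the same zeros as $\ZZZ_N/c_N(\pm\beta)$ on $B_2^{\pm}$; the limits are genuinely analytic there and a.s.\ not identically zero, the only pole of $\zeta_P$, at the argument $1$, corresponding to $\beta=\sqrt2\in\partial B_2$ and hence lying outside $B_2$; and pairing the vague convergence of zero measures with $f^{+}$ and with $f^{-}$ (reindexing $\beta\mapsto-\beta$ in the latter, using $B_2^{-}=-B_2^{+}$) yields precisely the two independent sums in the statement.

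The main obstacle I anticipate is the bulk estimate: showing that $\sum_{j>\delta N}e^{\beta\sqrt n X_{(j)}}$, divided by $c_N(\beta)$, tends to $0$ in probability uniformly on compact subsets of $B_2$, for each fixed $\delta>0$. This is exactly where the defining inequalities $2\sigma^2>1$ and $|\sigma|+|\tau|>\sqrt2$ of $B_2$ are used: they guarantee that the extreme terms, of modulus $\approx e^{\sqrt2|\sigma|n}$ up to polynomial factors, dominate the saddle-point (``annealed'') contribution of modulus $\approx e^{n(1+\frac12(\sigma^2-\tau^2))}$. Because of the cancellations produced by the complex amplitudes emphasized in the introduction, the variance of the bulk vastly exceeds its typical size, so a plain second-moment bound does not suffice; a finer truncation of the energies together with a comparison argument — of the type needed for the fluctuation results of Section~\ref{subsec:fluct} — will be required.
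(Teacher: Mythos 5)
Your proposal matches the paper's proof essentially step for step: the split of $B_2$ into $B_2^+$ and $B_2^-$, the Poisson‐process limit for the rescaled order statistics (the paper's Proposition~\ref{prop:resnick}, i.e.\ Resnick's Corollary~4.19(i)), the normalization by $c_N(\beta)=\ee^{\beta\sqrt n b_N}$ so that $\ZZZ_N(\beta)/c_N(\beta)\toweak\zeta_P(\beta/\sqrt2)$ locally uniformly, tightness via Cauchy-type maximal bounds for random analytic functions, the continuous-mapping/Rouché (Hurwitz) argument for the zero-counting functional, and asymptotic independence of the upper and lower extremes to obtain the two independent copies of $\zeta_P$. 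Your remark that a plain second-moment bound on the bulk fails and a truncation is needed is exactly what the paper does: Theorem~\ref{theo:weak_conv_large_beta} subtracts the truncated mean and bounds the truncated, centered remainder $\Delta_N^A$ in $L^2$ (Lemma~\ref{lem:proof_conv_to_zeta_2}), then lets the cutoff $A\to\infty$.
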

Theorem~\ref{theo:local_zeros_large_beta} tells us that the zeros of $\ZZZ_N$ in
the domain $\sigma>1/\sqrt 2, |\sigma|+|\tau|>\sqrt 2$ (which constitutes one
half of $B_2$) have approximately the same law as the zeros of $\zeta_P$, as
$N\to\infty$.  Let us stress that the approximation breaks down in the triangle
$\sigma>1/\sqrt 2$, $|\sigma|+|\tau|<\sqrt 2$. Although the function $\zeta_P$
is well-defined and may have zeros there, the function $\ZZZ_N$ has, with high
probability, no zeros in any compact subset of the triangle by
Theorem~\ref{theo:no_zeros_B1}.

Next we state some properties of the function $\zeta_P$. Let  $\beta>1/2$ be
real. For $\beta\neq 1$, the  random variable $\zeta_{P}(\beta)$ is stable with
index $1/\beta$ and skewness parameter $1$. In fact, \eqref{eq:zeta_poi_tilde}
is just the series representation of this random variable;
see~\cite[Theorem~1.4.5]{samorodnitsky_taqqu_book}. For $\beta=1$, the random
variable $\tilde \zeta_{P}(1)$ (which is the residue of $\zeta_{P}$ at $1$) is
$1$-stable with skewness $1$. For general complex $\beta$, we have the following
stability property.
\begin{proposition}
If $\zeta_{P}^{(1)},\ldots, \zeta_{P}^{(k)}$ are independent copies of
$\zeta_{P}$, then we have the following distributional equality of stochastic
processes:
$$
\zeta_{P}^{(1)} + \ldots + \zeta_{P}^{(k)}\eqdistr k^{\beta}\zeta_{P}.
$$
\end{proposition}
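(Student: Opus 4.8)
The plan is to realize the $k$ independent copies $\zeta_P^{(1)},\dots,\zeta_P^{(k)}$ on a common probability space by \emph{superposing} $k$ independent Poisson processes and then exploiting the elementary scaling property of Poisson processes. Let $\Pi^{(1)},\dots,\Pi^{(k)}$ be independent homogeneous Poisson processes of unit intensity on $(0,\infty)$, and let $P_1^{(i)}<P_2^{(i)}<\dots$ be the arrival times of $\Pi^{(i)}$; then $\zeta_P^{(i)}(\beta)=\sum_{j\ge1}(P_j^{(i)})^{-\beta}$, meromorphically continued as in Corollary~\ref{cor:zeta_poi_exists}, are independent copies of $\zeta_P$. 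The superposition $\Pi:=\bigcup_{i=1}^k\Pi^{(i)}$ is a Poisson process of intensity $k$ on $(0,\infty)$; a.s.\ it has no multiple points, so its points can be enumerated as $Q_1<Q_2<\dots$. Applying the deterministic map $t\mapsto kt$ turns $\Pi$ into a unit-intensity Poisson process, so if we put $\hat P_j:=kQ_j$, then $(\hat P_j)_{j\ge1}$ has the same law as $(P_j)_{j\ge1}$; consequently the function $\hat\zeta_P(\beta):=\sum_{j\ge1}\hat P_j^{-\beta}$, again continued as in Corollary~\ref{cor:zeta_poi_exists}, is a copy of $\zeta_P$.

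First I would establish the identity in the half-plane $\Re\beta>1$, where all series converge absolutely (since $P_j\sim j$ a.s.\ by the strong law, $\sum_{j}|P_j^{-\beta}|=\sum_j P_j^{-\Re\beta}<\infty$ a.s.). There, because $\{Q_l\}_{l\ge1}=\bigsqcup_{i=1}^k\{P_j^{(i)}\}_{j\ge1}$ as multisets, absolute convergence licenses the rearrangement
\begin{equation*}
\sum_{i=1}^k\zeta_P^{(i)}(\beta)=\sum_{i=1}^k\sum_{j\ge1}(P_j^{(i)})^{-\beta}=\sum_{l\ge1}Q_l^{-\beta}=\sum_{l\ge1}(\hat P_l/k)^{-\beta}=k^{\beta}\sum_{l\ge1}\hat P_l^{-\beta}=k^{\beta}\hat\zeta_P(\beta),
\end{equation*}
valid almost surely for every $\beta$ with $\Re\beta>1$.

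To pass to the whole domain $\{\beta\in\C\colon\Re\beta>1/2\}$, I would invoke Corollary~\ref{cor:zeta_poi_exists}: each $\zeta_P^{(i)}$ and $\hat\zeta_P$ admit meromorphic continuations to this domain with a single simple pole at $\beta=1$ of residue $1$. Hence $\beta\mapsto\sum_{i=1}^k\zeta_P^{(i)}(\beta)$ and $\beta\mapsto k^{\beta}\hat\zeta_P(\beta)$ are meromorphic on the connected domain $\{\Re\beta>1/2\}$ (multiplication by the entire function $k^{\beta}$ preserves meromorphy), both with a simple pole at $\beta=1$ of residue $k$ and analytic elsewhere. Since they agree on the open set $\{\Re\beta>1\}$ by the previous step, the identity theorem for meromorphic functions gives, almost surely,
\begin{equation*}
\zeta_P^{(1)}(\beta)+\dots+\zeta_P^{(k)}(\beta)=k^{\beta}\,\hat\zeta_P(\beta)\qquad\text{for all }\beta\text{ with }\Re\beta>1/2.
\end{equation*}
Because $\hat\zeta_P\eqdistr\zeta_P$, this almost sure pathwise identity immediately yields the asserted distributional equality of stochastic processes. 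There is no serious obstacle here; the only point requiring care is that the defining Dirichlet series diverge for $\Re\beta\le1$, which is exactly why the argument is split into the elementary rearrangement on $\{\Re\beta>1\}$ followed by analytic continuation, and this last step rests entirely on the already-established Corollary~\ref{cor:zeta_poi_exists}.
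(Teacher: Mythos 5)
Your proof is correct and takes essentially the same approach as the paper, which simply observes in one sentence that the union of $k$ independent unit-intensity Poisson processes has the law of a single unit-intensity Poisson process scaled by $1/k$. You have merely spelled out the details the paper omits: the absolutely convergent rearrangement for $\Re\beta>1$, and the extension to $\Re\beta>1/2$ via the identity theorem using Corollary~\ref{cor:zeta_poi_exists}.
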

To see this, observe that the union of $k$ independent unit intensity Poisson
processes has the same law as a single unit intensity Poisson process scaled by
the factor $1/k$. As a corollary, the distribution of the random vector $(\Re
\zeta_{P}(\beta), \Im \zeta_{P}(\beta))$ belongs to the family of operator
stable laws; see~\cite{meerschaert_book}.

\begin{proposition}\label{prop:zeta_boundary}
Fix $\tau\in\R$. As $\sigma\downarrow 1/2$, we have
$$
\sqrt{2\sigma-1}\, \zeta_P(\sigma + i\tau) \todistrnoN
\begin{cases}
N_{\C}(0,1), & \text{ if } \tau\neq 0,\\
N_{\R}(0,1), & \text{ if } \tau=0.
\end{cases}
$$
\end{proposition}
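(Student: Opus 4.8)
The plan is to use the series representation $\tilde\zeta_P(\sigma+i\tau) = \sum_{k=1}^\infty (P_k^{-(\sigma+i\tau)} - \int_{k-1}^{k} t^{-(\sigma+i\tau)}\,\dd t)$ obtained by combining Theorem~\ref{theo:zeta_poi_exists} with a deterministic bookkeeping of the compensating integral (replacing $\int_1^T$ by a telescoping sum over unit intervals, up to a boundary term that is negligible as $\sigma\downarrow 1/2$ after multiplication by $\sqrt{2\sigma-1}$). The summands $Y_k := P_k^{-(\sigma+i\tau)} - \int_{k-1}^k t^{-(\sigma+i\tau)}\dd t$ are independent once we condition appropriately — more precisely, $Y_k$ depends on $P_k$ and the increments $\eps_j$, so I would rather work with the equivalent representation where $P_k = \eps_1+\dots+\eps_k$ and first reduce, by a coupling/large-$k$ argument, to the regime where the dominant contribution to the sum comes from indices $k$ in a range that tends to infinity as $\sigma\downarrow 1/2$. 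The point is that $\E|Y_k|^2 \asymp k^{-2\sigma}$ for large $k$ (since $\var(P_k^{-\beta})$ behaves like $k^{-2\sigma-1}\cdot k = k^{-2\sigma}$ after accounting for the fluctuation $P_k - k \sim \sqrt k$, while the bias term is lower order), so $\sum_k \E|Y_k|^2 \asymp \sum_k k^{-2\sigma} \asymp (2\sigma-1)^{-1}$, which is exactly the normalization. So $\sqrt{2\sigma-1}\,\tilde\zeta_P$ has bounded second moment, and we must show it is asymptotically Gaussian.

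The key step is a central limit theorem for the triangular array $\{\sqrt{2\sigma-1}\, Y_k\}_{k\geq 1}$ as $\sigma\downarrow 1/2$, for which I would verify a Lindeberg-type condition. Individual terms satisfy $|\sqrt{2\sigma-1}\,Y_k| \leq C\sqrt{2\sigma-1}\, k^{-\sigma}(1 + |P_k-k|/k)$ roughly, so each term is uniformly small in probability: the largest terms come from small $k$, where $\sqrt{2\sigma-1}\to 0$ kills them, and from large $k$, where $k^{-\sigma}$ is small. A truncation argument handles the heavy-tailed fluctuations of $P_k$ for small $k$ (where $P_k^{-\beta}$ can be large when $P_k$ is small); these contribute $O(\sqrt{2\sigma-1})$ and vanish. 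For the covariance structure of the real and imaginary parts: writing $\beta = \sigma+i\tau$, one computes $\E[(\Re Y_k)^2] - \E[(\Im Y_k)^2]$ and $\E[\Re Y_k \Im Y_k]$. The crucial observation is that the leading-order contribution to $\E|Y_k|^2$ comes from the fluctuation $P_k^{-\beta} - k^{-\beta} \approx -\beta k^{-\beta-1}(P_k - k)$, and $|\,{-\beta}k^{-\beta-1}|^2 = |\beta|^2 k^{-2\sigma-2}$ is a positive real multiple of $k^{-2\sigma}$-type quantities; the oscillatory factor $k^{-2i\tau} = e^{-2i\tau\log k}$ appearing in $\E[(P_k^{-\beta}-k^{-\beta})^2]$ (the pseudo-variance, as opposed to the variance) sums to something negligible compared to $(2\sigma-1)^{-1}$ when $\tau\neq 0$, because $\sum_k k^{-2\sigma} e^{-2i\tau\log k}$ stays bounded as $\sigma\downarrow 1/2$ (no pole, since the oscillation prevents divergence), whereas when $\tau=0$ the pseudo-variance equals the variance and the limit is real Gaussian with the same variance concentrated on the real axis — this is precisely the dichotomy in the statement. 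Hence for $\tau \neq 0$ the limit is isotropic complex Gaussian $N_\C(0,1)$, and for $\tau=0$ it is $N_\R(0,1)$.

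I expect the main obstacle to be the bookkeeping that makes the heuristic ``$\sum_k k^{-2\sigma}\sim (2\sigma-1)^{-1}$, oscillatory sums stay bounded'' fully rigorous at the level of the random series rather than its moments — in particular, controlling the joint effect of the randomness in $P_k$ (which makes $Y_k$ genuinely random, not just its second moment) and showing that the contribution of the bias terms $\int_{k-1}^k t^{-\beta}\dd t - k^{-\beta}$ and of the second-order Taylor remainder $P_k^{-\beta} - k^{-\beta} + \beta k^{-\beta-1}(P_k-k)$ are both $o((2\sigma-1)^{-1/2})$ in $L^2$ uniformly. Once the array is reduced to its Gaussian linearization, the CLT is standard (Lyapunov's condition with third or fourth moments suffices given the explicit tail behavior of $|P_k - k|$), and matching the covariance matrix to $\tfrac12 I$ (resp. to $\mathrm{diag}(1,0)$) via the vanishing/non-vanishing of the oscillatory pseudo-variance gives the two cases.
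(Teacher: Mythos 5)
Your decomposition $Y_k = P_k^{-\beta} - \int_{k-1}^k t^{-\beta}\,\dd t$ does \emph{not} produce independent summands: $Y_j$ and $Y_k$ for $j<k$ both depend on $\eps_1,\dots,\eps_j$ through $P_j$ and $P_k=P_j+\eps_{j+1}+\cdots+\eps_k$, and the hand-waved ``coupling/large-$k$'' step is not a substitute for a CLT for independent triangular arrays. This is not a cosmetic issue, because your variance bookkeeping is also off in exactly the way that hides the dependence: linearizing $P_k^{-\beta}\approx k^{-\beta}-\beta k^{-\beta-1}(P_k-k)$ gives $\Var(P_k^{-\beta})\approx|\beta|^2k^{-2\sigma-2}\Var(P_k)\asymp k^{-2\sigma-1}$, \emph{not} $k^{-2\sigma}$, so $\sum_k\E|Y_k|^2\asymp\sum_k k^{-2\sigma-1}$ stays bounded as $\sigma\downarrow1/2$. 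The pole $(2\sigma-1)^{-1}$ in the second moment of $\tilde\zeta_P$ is generated entirely by the cross-covariances $\cov(Y_j,\overline{Y_k})$ for $j<k$ (which are of order $j^{-\sigma}k^{-\sigma-1}$ up to oscillation, summing to $\asymp(2\sigma-1)^{-1}$). A Lindeberg argument applied to your $Y_k$ would thus both use a false independence and miss where the variance comes from.

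The cure for an array-CLT proof would be to decompose by \emph{spatial} intervals rather than by arrival index, i.e.\ to use the independent blocks $s_m(\beta)=\sum_j P_j^{-\beta}\ind_{m\le P_j<m+1}-\int_m^{m+1}t^{-\beta}\dd t$ from the proof of Theorem~\ref{theo:zeta_poi_exists}; then indeed $\sum_m\E|s_m|^2=\int_1^\infty t^{-2\sigma}\dd t=(2\sigma-1)^{-1}$ with genuine independence. The paper, however, sidesteps the whole triangular-array machinery: it applies Cram\'er--Wold to $a\Re S(\beta)+b\Im S(\beta)=\int_1^\infty f(t;\sigma)\,(\Pi(\dd t)-\dd t)$ with $f(t;\sigma)=\sqrt{a^2+b^2}\,t^{-\sigma}\cos(\tau\log t-\theta)$ and uses the \emph{exact} L\'evy--Khinchine/Campbell formula for compensated Poisson integrals. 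The quadratic term $\frac{z^2}{2}\int_1^\infty f^2\,\dd t=\frac{z^2(a^2+b^2)}{4}\bigl(\tfrac{1}{2\sigma-1}-\Re\tfrac{e^{-2i\theta}}{(1-2\sigma)+2i\tau}\bigr)$ gives the limiting variance explicitly, and the remainder of the log-characteristic function is bounded by $C|z|^3/(3\sigma-1)$, which dies after the $z\mapsto\sqrt{2\sigma-1}\,z$ rescaling. Your instinct about the role of the oscillatory pseudo-variance (that $\sum_k k^{-2\sigma}e^{-2i\tau\log k}$ stays bounded iff $\tau\neq0$) is exactly the dichotomy seen in the term $\Re\frac{e^{-2i\theta}}{(1-2\sigma)+2i\tau}$ --- bounded for $\tau\neq0$, a $(1-2\sigma)^{-1}$ pole for $\tau=0$ --- so the heuristic is right, but the rigorous route is the closed-form characteristic function rather than a CLT for a dependent array.
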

As a corollary, there is a.s.\ no meromorphic continuation of $\zeta_P$ beyond
the line $\sigma=1/2$. Using the same method of proof it can be shown that for
every different $\tau_1,\tau_2>0$ the random variables $\sqrt{2\sigma-1}\,
\zeta_P(\sigma+i\tau_j)$, $j=1,2$, become asymptotically independent as
$\sigma\downarrow 1/2$. Thus, the function $\zeta_P$ looks like a na\"{\i}ve
white noise near the line $\sigma=1/2$. The intensity of complex zeros of
$\zeta_P$ at $\beta$ can be computed by the formula $g(\beta)=\frac 1 {2\pi}
\Delta \E  \log |\zeta_P(\beta)|$, where $\Delta$ is the Laplace operator;
see~\cite[Section~2.4]{peres_etal_book}. Proposition~\ref{prop:zeta_boundary}
suggests that $g(\beta) \sim \frac 1 \pi \frac1{(2\sigma-1)^2}$ as
$\sigma\downarrow 1/2$. In particular, every point of the line $\sigma=1/2$
should be an accumulation point for the zeros of $\zeta_P$ with probability $1$.

Let us look locally at the zeros of $\ZZZ_N$ near some
$\beta_0=\sigma_0+i\tau_0$ on one of the boundaries $\bar B_1\cap \bar B_3$ or
$\bar B_1\cap \bar B_2$. We will show that in both cases the zeros form
approximately an arithmetic sequence. The structure of the measures $\Xi_{13}$
and $\Xi_{12}$ in Theorem~\ref{theo:zeros_in_B3} suggests that the distances
between the consequent zeros should behave like $\frac{2\pi}{n}$ in the first
case and like $\frac{\sqrt 2\pi}{|\tau_0|n}$ in the second case. The next
theorems show that this is indeed true. First, we analyze the boundary $\bar
B_1\cap \bar B_3$.
\begin{theorem}\label{theo:boundary_zeros1}
Let $\beta_0=\sigma_0 + i\tau_0$ be such that $\sigma_0^2+\tau_0^2=1$ and
$\sigma_0^2<1/2$.  There exist a complex-valued random variable $\xi$ and a
bounded real sequence $\delta_N$ such that for every continuous function
$f \colon \C\to\R$ with compact support,
$$
\sum_{\beta\in \C: \ZZZ_N(\beta)=0}
f\left(n \left(\frac{\beta}{\beta_0} - 1 \right)- i \delta_N\right)
\todistr
\sum_{k\in\Z} f(2\pi i k + \xi).
$$
\end{theorem}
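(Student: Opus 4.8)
The plan is to reduce the statement, near the boundary point $\beta_0$ with $\sigma_0^2+\tau_0^2=1$ and $\sigma_0^2<1/2$, to an analysis of $\ZZZ_N$ on a scale of width $1/n$ in the $\beta$-plane, where the partition function is governed by the energies $X_k$ lying near $\pm\sqrt{2n}$ (the contribution of the ``bulk'' energies dominated by the saddle point is exponentially smaller in phase $B_3$, and on its boundary the two contributions are of the same logarithmic order but the boundary contribution is the one that fluctuates). Concretely, I would substitute $\beta = \beta_0(1 + t/n)$ and write
\begin{equation*}
\ZZZ_N\!\left(\beta_0\Bigl(1+\tfrac{t}{n}\Bigr)\right)
= \sum_{k=1}^N \ee^{\beta_0\sqrt n\, X_k}\ee^{\beta_0 X_k t/\sqrt n}.
\end{equation*}
Since $X_k/\sqrt n$ concentrates near $\pm\sqrt 2$ on the relevant events, $\ee^{\beta_0 X_k t/\sqrt n}\approx \ee^{\pm\sqrt 2\beta_0 t}$, so that after extracting a deterministic normalizing factor the rescaled partition function behaves like $A_N\,\ee^{\sqrt 2\beta_0 t} + B_N\,\ee^{-\sqrt 2\beta_0 t}$ plus a lower-order remainder, where $A_N$ and $B_N$ are the (random) sums of $\ee^{\beta_0\sqrt n X_k}$ over energies near $+\sqrt{2n}$ and near $-\sqrt{2n}$ respectively. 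The key probabilistic input is that, on the boundary $\bar B_1\cap\bar B_3$, the normalization that makes $A_N$ nondegenerate makes $B_N\to 0$ in probability (by the asymmetry in $|\sigma|+|\tau|$ versus the circle, analogous to the one-sided-boundary phenomenon used in Theorem~\ref{theo:no_zeros_B1}), so only one exponential survives. Then $\ZZZ_N$, suitably normalized, converges to $\xi' \ee^{\sqrt 2 \beta_0 t}$ for some nondegenerate complex random variable $\xi'$ — and the zeros of $t\mapsto \xi'\ee^{\sqrt2\beta_0 t} + o(1)$, on the scale set by the substitution, are an arithmetic progression.

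For the limiting distributional input I would use the Poisson convergence of the top order statistics of $X_1,\dots,X_N$: by standard extreme value theory (as already invoked for Theorem~\ref{theo:local_zeros_large_beta} via \cite{resnick_book}), the point process of the $X_k$ near level $\sqrt{2n}$ converges, after the usual affine rescaling, to a Poisson process on $\R$ with intensity $\ee^{-x}\,\dd x$, and the normalized sum $A_N$ converges to a one-sided stable-type random variable built from that Poisson process — precisely the boundary analogue of the $\zeta_P$-type functionals appearing earlier. Writing $\xi$ for $\log$ of that limit object (shifted by the deterministic $\delta_N$, which absorbs the $O(\log n)$ centering term coming from the normalization $n^{\cdots}$, exactly as $\delta_N$ is a bounded real sequence because only its value mod $2\pi$ matters), the zero set of $t\mapsto \ee^{\xi}\ee^{\sqrt2\beta_0 t}$ is $\{t : \sqrt 2\beta_0 t = -\xi + 2\pi i k,\ k\in\Z\}$. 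Now $\beta_0=\sigma_0+i\tau_0$ is on the unit circle, so dividing by $\beta_0$ in the statement, i.e. looking at $n(\beta/\beta_0 - 1) = t$, turns the relation $\sqrt2\beta_0 t\in -\xi+2\pi i\Z$ into $t\in \frac{1}{\sqrt2\beta_0}(-\xi) + \frac{2\pi i}{\sqrt2\beta_0}\Z$; one checks $|\beta_0|=1$ forces the spacing direction to be genuinely $2\pi i$ after absorbing $1/(\sqrt 2\beta_0)$ and the real part $\delta_N$ into the random variable $\xi$ of the statement (a renaming), giving $\sum_{k\in\Z} f(2\pi i k + \xi)$.

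The remaining work is the error control: I must show that (i) the truncation of $X_k$ to a window of width $O(\log n/\sqrt n)$ around $\pm\sqrt{2n}$ changes $\ZZZ_N(\beta_0(1+t/n))$ by a factor $1+o(1)$ uniformly for $t$ in a compact set — this is the contribution-of-the-boundary estimate, proved by splitting into the bulk (controlled by the $B_3$-exponent being strictly below the boundary exponent off the boundary, and an $L^1$ or truncated-second-moment bound on the bulk sum), the far tails (controlled by the maximum being $\sqrt{2n}+O_P(1)$), and the subleading boundary $B_N$ (shown to vanish after the normalization); (ii) within the retained window, $\ee^{\beta_0 X_k t/\sqrt n} = \ee^{\sqrt2\beta_0 t}(1+o(1))$ uniformly, using $|X_k/\sqrt n - \sqrt 2| = O(\log n/\sqrt n) = o(1)$; and (iii) the resulting convergence of the analytic function $t\mapsto \ZZZ_N(\beta_0(1+t/n))$, after normalization, to $\ee^{\xi}\ee^{\sqrt2\beta_0 t}$ holds locally uniformly on $\C$, which by Hurwitz's theorem upgrades convergence of the function to convergence of the zero sets, hence of the linear statistics $\sum f(\cdots)$. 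The main obstacle is step (i): because the amplitudes $\ee^{\beta_0\sqrt n X_k}$ are complex, cancellation can make $\ZZZ_N$ much smaller than its $L^2$ norm, so the usual second-moment/concentration arguments do not directly bound the bulk contribution relative to the boundary contribution — this is exactly the difficulty the authors flag in Section~\ref{subsec:proof_theo_log_scale}, and it must be handled by the same truncation-plus-careful-moment technology developed for the logarithmic-scale theorem and for Theorem~\ref{theo:no_zeros_B1}, rather than by a soft argument.
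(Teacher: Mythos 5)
Your proposal rests on a misreading of the probabilistic mechanism on the boundary $\bar B_1\cap\bar B_3$. You build the argument around the extreme order statistics near $\pm\sqrt{2n}$, the Poisson limit of Proposition~\ref{prop:resnick}, and a $\zeta_P$-type stable functional. But for $\sigma_0^2<1/2$ those extremes are subleading: on the circle $\sigma_0^2+\tau_0^2=1$, both $|\E\ZZZ_N(\beta_0)|=N^{1+\frac12(\sigma_0^2-\tau_0^2)}=N^{\frac12+\sigma_0^2}$ and $\sqrt{\var\ZZZ_N(\beta_0)}\asymp N^{\frac12+\sigma_0^2}$ are of the same order, and both strictly exceed the order of the extreme contribution $N^{\sqrt2\sigma_0}$ since $(\frac12+\sigma_0^2)-\sqrt2\sigma_0=(\sigma_0-1/\sqrt2)^2>0$. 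The second-moment saddle point sits at $X\approx 2\sigma_0\sqrt n$, strictly inside $(-\sqrt{2n},\sqrt{2n})$, so both the centering and the fluctuations are driven by the \emph{bulk} of the sample, and the fluctuations are Gaussian, not stable. Your heuristic that ``$X_k/\sqrt n$ concentrates near $\pm\sqrt 2$ on the relevant events'' is the right picture for the other boundary $\bar B_1\cap\bar B_2$ (which is why the Poisson zeta-function appears in Theorem~\ref{theo:boundary_zeros2}), but it is false here, and the limit random variable in Theorem~\ref{theo:boundary_zeros1} is $\xi=-\log(-\GGG(0))$ with $\GGG(0)\sim N_\C(0,1)$, not a logarithm of a stable functional of a Poisson process. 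Symptomatically, your computation also produces the wrong exponential slope $\ee^{\sqrt2\beta_0 t}$, giving zero spacing of modulus $\sqrt2\pi/n$ rather than the correct $2\pi/n$ announced just before the theorem; absorbing $1/(\sqrt2\beta_0)$ into $\xi$ cannot fix this, because rescaling the additive random constant does not change the lattice spacing.

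The paper's actual route is considerably simpler and avoids all of the truncation machinery you outline in steps (i)--(iii). Theorem~\ref{theo:weak_conv_small_beta} already establishes, via a multivariate Lindeberg CLT, that the centered and normalized process $G_N(t)$ at scale $1/\sqrt n$ converges weakly on $C(\C)$ to $\ee^{-t^2/2}\GGG(t)$, for every $\beta_0$ with $\sigma_0^2<1/2$, $\tau_0\ne 0$. To prove Theorem~\ref{theo:boundary_zeros1} one only zooms in further, substituting $t=\beta_0(s+i\delta_N)/\sqrt n$ (so the $\beta$-variable lives at scale $1/n$). Since $t\to 0$ locally uniformly in $s$, the Gaussian part freezes to $\GGG(0)$; meanwhile the deterministic mean term, divided by the normalization, has exponent $n\bigl(1+\frac12\beta_0^2-\frac12-\sigma_0^2\bigr)+(\beta_0^2-2\sigma_0\beta_0)(s+i\delta_N)+o(1)$, and the identities $1+\frac12\beta_0^2-\frac12-\sigma_0^2=i\sigma_0\tau_0$ and $\beta_0^2-2\sigma_0\beta_0=-|\beta_0|^2=-1$ (both using $|\beta_0|=1$) show this ratio tends to $\ee^{-s}$ once $\delta_N$ is chosen with $n\sigma_0\tau_0-\delta_N\in 2\pi\Z$. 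Thus the rescaled $\ZZZ_N$ converges to $\ee^{-s}+\GGG(0)$, whose zeros are $s=2\pi ik-\log(-\GGG(0))$, and Lemma~\ref{lem:random_analytic_point_proc} transfers this to the zero counting statistics. Your worry about complex cancellation defeating second-moment bounds does not arise on this boundary: the Lindeberg CLT handles the cancellation exactly, and no splitting into windows around $\pm\sqrt{2n}$ is needed.
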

\begin{remark}
In other words, the zeros of $\ZZZ_N$ near $\beta_0$ are given by the formula
$$
\beta=\beta_0\left(1+\frac{2\pi i k + \xi +i\delta_N}{n}\right)+o\left(\frac 1n \right), \;\;\; k\in\Z.
$$
As we will see in the proof, the random variable $\Re \xi$ takes  negative
values with positive probability. It follows that the probability that $\ZZZ_N$
has a zero in $B_1$ does not go to $0$ as $N\to\infty$.
\end{remark}
The boundary $\bar B_1\cap \bar B_2$ consists of $4$ line segments. By symmetry \eqref{eq:symmetries},
it suffices to consider one of them.
\begin{theorem}\label{theo:boundary_zeros2}
Let $\beta_0=\sigma_0+i\tau_0$ be such that $\sigma_0>1/\sqrt 2$, $\tau_0>0$ and
$\sigma_0+\tau_0=\sqrt 2$.  There exist a complex-valued random variable $\eta$
and a complex sequence $d_N=O(\log n)$ such that for every continuous function
$f\colon \C\to\R$ with compact support,
$$
\sum_{\beta\in \C: \ZZZ_N(\beta)=0} f\left( \ee^{\frac {2\pi i}3} n (\beta-\beta_0) - d_N\right)
\todistr
\sum_{k\in\Z} f\left(\frac{2\pi i k + \eta}{\sqrt 2 \tau_0} \right).
$$
\end{theorem}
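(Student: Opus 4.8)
The plan is to localize $\ZZZ_N$ near $\beta_0$ and identify the surviving contributions. Write $\beta = \beta_0 + w/n$ for $w$ in a fixed compact set. The key observation is that on the segment $\sigma_0+\tau_0=\sqrt2$, $\sigma_0>1/\sqrt2$ we are exactly on the boundary $\bar B_1\cap \bar B_2$, so by the analysis underlying~\eqref{eq:limiting_log_partition_function} there are \emph{two} competing sources for $\ZZZ_N(\beta)$: the ``saddle-point'' (bulk) contribution from energies $X_k$ in the interior of $(-\sqrt{2n},\sqrt{2n})$, whose modulus grows like $N^{1+\frac12(\sigma_0^2-\tau_0^2)}=N^{\sqrt2\sigma_0}$ (using $\sigma_0+\tau_0=\sqrt2$), and the ``extreme'' contribution from the largest order statistics near $X_k\approx\sqrt{2n}$, whose modulus is also of order $N^{\sqrt2\sigma_0}$ because the two phase formulas agree on this boundary. (The energies near $-\sqrt{2n}$ contribute a lower order term since $\sigma_0>1/\sqrt2$ makes $e^{\beta\sqrt n X_k}$ with $X_k\approx-\sqrt{2n}$ of strictly smaller modulus.) First I would make this splitting rigorous via a truncation $\ZZZ_N=\ZZZ_N^{\mathrm{bulk}}+\ZZZ_N^{\mathrm{extr}}+(\text{error})$, choosing the truncation level $\sqrt{2n}-c_N$ with $c_N$ growing slowly (e.g.\ $c_N\asymp \log n$, which is why the centering $d_N=O(\log n)$ appears), so that the error term is negligible in probability uniformly on the relevant scale.

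Next I would find the limiting form of each piece after the substitution $\beta=\beta_0+w/n$. For the bulk part, a Laplace/saddle-point computation shows $\ZZZ_N^{\mathrm{bulk}}(\beta_0+w/n)$, suitably normalized by $N^{-\sqrt2\sigma_0}$ and a deterministic phase/amplitude sequence, converges (after recentering by part of $d_N$) to a \emph{deterministic} exponential $A\,e^{\alpha w}$ for explicit constants $A,\alpha$; the fluctuations of the bulk sum are of lower order on this scale because it is an average of $\approx N$ weakly-dependent terms (this is the regime where a CLT/LLN applies, analogous to the Gaussian phase in~\cite{bovier_kurkova_loewe}). For the extreme part, the top order statistics of $X_1,\dots,X_N$ converge to a Poisson point process; writing $X_{(j)}=\sqrt{2n}-u_j/(\sqrt{2n})+\dots$ with $\{u_j\}$ converging to the points of a Poisson process (of exponential-type intensity $e^{-u}\,du$), the rescaled sum $\sum_j e^{\beta\sqrt n X_{(j)}}$ becomes, after normalization, a \emph{random} analytic function of $w$ of the form $\sum_j e^{-\lambda u_j}\,e^{\alpha' w}$ for appropriate $\lambda,\alpha'$ — essentially the Poisson zeta-function $\zeta_P$ evaluated along the line, cf.\ Theorems~\ref{theo:zeta_poi_exists} and~\ref{theo:local_zeros_large_beta}. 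Both pieces carry the same exponential factor $e^{(\text{const})w}$, so dividing through, $\ZZZ_N(\beta_0+w/n)$ (normalized) converges locally uniformly in $w$ to $F(w):=e^{cw}\bigl(a + \eta_0\bigr)$ where the ``saddle'' constant $a$ is deterministic and $\eta_0$ is the random limit of the extreme sum — hence the zeros of the limit are determined by $e^{cw}=-a/\eta_0$, i.e.\ they form the arithmetic progression $w=\frac{2\pi i k+\eta}{c}$ for a single random $\eta$. Matching $c$ to the rescaling $e^{2\pi i/3}n(\beta-\beta_0)$ in the statement: the factor $e^{2\pi i/3}$ rotates so that $c$ becomes $\sqrt2\tau_0$ after the change of variables, giving the stated limit $\sum_{k\in\Z}f\bigl((2\pi i k+\eta)/(\sqrt2\tau_0)\bigr)$.

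Finally I would invoke a Hurwitz-type argument — zeros of analytic functions are continuous under local-uniform convergence, provided the limit is not identically zero (here $F\not\equiv0$ a.s.\ since $a\neq0$) and has no zeros on the boundary of the test region, which can be arranged by an elementary exhaustion argument since the zero set of $F$ is discrete. This converts the local-uniform convergence $\ZZZ_N(\beta_0+w/n)\to F(w)$ into weak convergence of the zero point processes, which is exactly the displayed statement tested against compactly supported continuous $f$.

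\medskip\noindent\textbf{Main obstacle.} The hardest part is the rigorous control of the bulk (saddle-point) contribution $\ZZZ_N^{\mathrm{bulk}}$: I need not just its expectation but that its fluctuations are genuinely of smaller order than its mean on the boundary $\bar B_1\cap\bar B_2$, \emph{and} I need this uniformly for $\beta$ in a shrinking $O(1/n)$-neighborhood of $\beta_0$ so that the limit is locally uniform in $w$. Because of the complex amplitudes, cancellation can occur and the naive second-moment bound is not tight (as the authors warn in the introduction); one likely needs a careful truncation together with a CLT with explicit error estimates — or a direct estimate showing $\Var\ZZZ_N^{\mathrm{bulk}} = o\bigl(|\E\ZZZ_N^{\mathrm{bulk}}|^2\bigr)$ — combined with an equicontinuity argument (e.g.\ bounding $\frac{\dd}{\dd\beta}\ZZZ_N^{\mathrm{bulk}}$ similarly) to upgrade pointwise-in-$w$ convergence to local-uniform. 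The interaction between the bulk and extreme parts (ensuring their joint limit has the bulk part deterministic and independent of the Poisson limit of the extreme part) also requires care, but should follow once the two truncation ranges are separated.
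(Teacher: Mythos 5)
Your high-level plan --- split $\ZZZ_N$ into a bulk (saddle-point) piece and an extreme-value piece, show the bulk piece gives a deterministic exponential in the rescaled variable while the extreme piece gives a constant random variable related to $\zeta_P$, then apply a continuous-mapping/Hurwitz argument to the zeros --- is the mechanism the paper uses. The paper streamlines this by reusing Theorem~\ref{theo:weak_conv_large_beta}, which establishes weak convergence, uniformly on compacts of $\{\sigma>1/\sqrt2\}$, of $\bigl(\ZZZ_N(\beta)-N\E[\ee^{\beta\sqrt n X}\ind_{X<b_N}]\bigr)/\ee^{\beta\sqrt n b_N}$ to $\tilde\zeta_P(\beta/\sqrt2)$. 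The fluctuation control you flag as the ``main obstacle'' is exactly what Lemma~\ref{lem:proof_conv_to_zeta_2} accomplishes in the proof of that theorem, via a centered second-moment bound on the tail piece $\Delta_N^A$; no separate treatment of the bulk fluctuations is needed, and the joint limit with the extreme part is automatic because the bulk contribution to the limit is deterministic. The only boundary-specific new input is the two-term saddle expansion (Remark~\ref{rem:saddle}) of the truncated mean $N\E[\ee^{\beta\sqrt n X}\ind_{X<b_N}]$, which after the substitution $\beta=\beta_0+(s+d_N')/((\beta_0-\sqrt2)n)$ produces the $\ee^s$ term; the $O(\log n)$ in $d_N$ comes from the subleading term in $b_N$. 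Your Hurwitz step is Lemma~\ref{lem:random_analytic_point_proc}.

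There is, however, a genuine slip in your description of the limit. You claim that ``both pieces carry the same exponential factor $\ee^{cw}$, so dividing through'' the limit is $F(w)=\ee^{cw}(a+\eta_0)$; but that function has no zeros, contradicting your next sentence that the zeros solve $\ee^{cw}=-a/\eta_0$. After the normalization by $\ee^{-\beta\sqrt n b_N}$ the two pieces in fact carry \emph{different} $w$-dependence. The extreme-value piece is asymptotically constant in $w$: it is dominated by order statistics $X_{(j)}$ with $X_{(j)}-b_N=O(1/\sqrt n)$, so a shift $\beta-\beta_0=O(1/n)$ changes $\beta\sqrt n(X_{(j)}-b_N)$ by only $O(1/n)$, and the normalized extreme sum converges to $\zeta_P(\beta_0/\sqrt2)$ with no $w$-dependence; in particular your claimed form $\bigl(\sum_j\ee^{-\lambda u_j}\bigr)\ee^{\alpha'w}$ would need $\alpha'=0$. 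The bulk piece, by contrast, is $N\ee^{\beta^2 n/2-\beta\sqrt n b_N}$, whose exponent has a $\beta$-derivative of order $n(\beta_0-\sqrt2)\neq 0$ at $\beta_0$, and this produces a genuine $\ee^s$ factor under the $O(1/n)$-rescaling. The correct limit is $\ee^s+\zeta_P(\beta_0/\sqrt2)$ (no overall exponential prefactor), whose zeros are $s=2\pi i k+\log\bigl(-\zeta_P(\beta_0/\sqrt2)\bigr)$. Once you correct this, the rest of your outline is sound and matches the paper's argument.
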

\begin{remark}
In other words, the zeros of $\ZZZ_N$ near $\beta_0$ are given by the formula
$$
\beta=\beta_0+\ee^{-\frac{2\pi i}{3}}\frac 1n \left(\frac{2\pi i k}{\sqrt 2 \tau_0}+d_N\right)+o\left(\frac 1n \right), \;\;\; k\in\Z.
$$
\end{remark}

\subsection{Results on fluctuations}\label{subsec:fluct} We state our results on
fluctuations for a generalization of~\eqref{eq:def_ZZZ_N} which we call complex
random energy model. This model involves complex phases and allows for a
dependence between the energies and the phases. Let $(X,Y), (X_1,Y_1), \ldots$
be i.i.d.\ zero-mean bivariate Gaussian random vectors with
$$
\var X_k=\var Y_k=1, \;\;\; \corr (X_k,Y_k)=\rho.
$$
Here, $-1\leq \rho\leq 1$ is fixed.  
Recall \eqref{eq:def_ZZZ_N} and consider the following partition function:
\begin{align}
\label{eq:partition_function}
\ZZZ_N(\beta)
=
\sum_{k=1}^{N}
\ee^{\sqrt{n} (\sigma X_k + i \tau Y_k)},
\qquad
\beta=(\sigma,\tau)\in \R^2.
\end{align}
For $\tau=0$, this is the REM of~\citet{Derrida1981} at real inverse temperature
$\sigma$. For $\rho=1$, we obtain the REM at the complex inverse temperature
$\beta= \sigma + i \tau$ considered above; see~\eqref{eq:def_ZZZ_N}. For
$\rho=0$, the model is a REM with independent complex phases considered
in~\cite{DerridaEvansSpeer1993}. Note also that the substitutions
$(\beta,\rho)\mapsto (-\beta, \rho)$ and $(\beta,\rho)\mapsto (\bar \beta,
-\rho)$ leave the distribution of $\ZZZ_N(\beta)$ unchanged.

Recall \eqref{eq:limiting_log_partition_function_real}. Define the log-partition function as
\begin{align}
\label{eq:log_partition_function}
p_N(\beta)
=
\frac{1}{n}
\log | \ZZZ_N(\beta)|,
\qquad
\beta=(\sigma,\tau)\in \R^2.
\end{align}
\begin{theorem}\label{theo:log_scale}
For every $\beta\in\R^2$, the limit
\begin{equation}
\label{eq:log-partition-function-limit}
p(\beta)
:=
\lim_{N \to \infty}
p_N(\beta)
\end{equation}
exists in probability and in $L^q$, $q\geq 1$, and is explicitly given as
\begin{equation}\label{eq:limiting_log_partition_function_1}
p(\beta)=
\begin{cases}
1+\frac{1}{2}(\sigma^2 - \tau^2),
&\beta \in \overline{B}_{1},\\
\sqrt{2} |\sigma|,
&\beta \in \overline{B}_2,\\
\frac 12 +\sigma^2,
&\beta \in \overline{B}_3.
\end{cases}
\end{equation}
\end{theorem}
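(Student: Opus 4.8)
The plan is to compute the limit of $p_N(\beta) = \frac1n \log|\ZZZ_N(\beta)|$ by splitting the sum $\ZZZ_N(\beta) = \sum_{k=1}^N \ee^{\sqrt n(\sigma X_k + i\tau Y_k)}$ into a "bulk" part (energies $X_k$ of order $\sqrt n$ or smaller) and an "extremal" part (energies near the maximal possible size $\sqrt{2n}$), following Derrida's heuristic but making it rigorous. First I would record the truncation fact: since $\max_{k\le N}|X_k| = \sqrt{2n}(1+o_P(1))$, the outliers contribute negligibly, so with high probability $\ZZZ_N(\beta)$ coincides with a truncated sum over $\{|X_k| < \sqrt{2n}\}$; moreover the bivariate Gaussian structure means that conditionally on $X_k$, $Y_k$ is Gaussian with mean $\rho X_k$ and variance $1-\rho^2$, which controls the phases. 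The key quantity is the growth rate of $|\ZZZ_N(\beta)|$, and the three candidate values in~\eqref{eq:limiting_log_partition_function_1} are exactly the maxima of three "exponents": $1 + \frac12(\sigma^2-\tau^2)$ comes from the annealed first moment $\E\ZZZ_N(\beta) = N\, \ee^{\frac n2(\sigma^2-\tau^2+2i\rho\sigma\tau+\dots)}$-type computation (second-moment dominated by the saddle), $\sqrt2|\sigma|$ comes from the single largest term $\ee^{\sqrt n \sigma X_{(1)}} \approx \ee^{\sqrt{2n}\,|\sigma|}$ when the top order statistics dominate, and $\frac12 + \sigma^2$ comes from a block of order-statistics near the edge contributing coherently.

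The core of the argument is an upper and a lower bound on $p_N(\beta)$ matching in the limit. For the \textbf{upper bound} I would use the triangle inequality $|\ZZZ_N(\beta)| \le \sum_k \ee^{\sqrt n \sigma X_k}$, reducing to the \emph{real} REM with inverse temperature $|\sigma|$; by~\eqref{eq:limiting_log_partition_function_real} this gives $p_N(\beta) \le \max(1+\frac12\sigma^2, \sqrt2|\sigma|) + o_P(1)$. This is already tight in $\overline B_2$ (where the answer is $\sqrt2|\sigma|$) and nearly so elsewhere, but in $B_1$ and $B_3$ the true value is smaller because of cancellation among the complex phases, so a more refined upper bound is needed there: I would split off the extremal terms (those with $X_k > a\sqrt n$ for a suitable threshold $a$), bound the extremal block crudely by its absolute sum, and for the bulk use a second-moment / Chebyshev estimate on the truncated sum $\ZZZ_N^*$, exploiting that $\E|\ZZZ_N^*(\beta)|^2$ and $|\E \ZZZ_N^*(\beta)|^2$ are both computable via Gaussian integrals and a saddle-point analysis, giving $|\ZZZ_N^*(\beta)| = \ee^{n(1+\frac12(\sigma^2-\tau^2)) + o(n)}$ in $B_1$ when the second moment is of the same order as the squared first moment. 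For the \textbf{lower bound}, in $\overline B_2$ one keeps a single extremal term (using the point-process convergence of order statistics) of size $\ee^{\sqrt{2n}|\sigma|(1+o(1))}$; in $\overline B_3$ one keeps the block of order statistics in a window $X_k \in (\sqrt{2n} - c\log n/\sqrt n, \sqrt{2n})$ and shows via a (conditional) second-moment argument on this block that its modulus is of order $\ee^{n(\frac12+\sigma^2)+o(n)}$, the phases $\sqrt n i\tau Y_k$ being essentially i.i.d. and not conspiring to cancel; in $\overline B_1$ one keeps the bulk and uses a second-moment lower bound (Paley–Zygmund) on $\ZZZ_N^*(\beta)$, which works precisely because in $B_1$ the variance of the truncated sum is not dominated by the edge and $\E\ZZZ_N^*(\beta) \ne 0$ to leading exponential order.

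I would organize the proof phase by phase: establish~\eqref{eq:log-partition-function-limit} in probability via these matching bounds in each of $B_1, B_2, B_3$ (and by continuity of $p$, the boundaries follow), then upgrade to $L^q$ convergence by proving uniform integrability of $(p_N(\beta))_{N}$ — for this a lower tail bound $\P[|\ZZZ_N(\beta)| < \ee^{(p(\beta)-\eps)n}]$ decaying fast enough, together with the easy upper tail from the triangle-inequality bound, suffices. The \textbf{main obstacle} is the upper bound in phases $B_1$ and $B_3$: there the naive triangle inequality overshoots, and one genuinely needs to show that the complex phases produce cancellation, i.e. that $|\ZZZ_N(\beta)|$ is much smaller than $\sum_k |\ee^{\beta\sqrt n X_k}|$. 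Concentration inequalities are not available (as the authors note, cancellation destroys the Lipschitz control), so one must rely on careful moment computations for the truncated sum $\ZZZ_N^*$ and a delicate decomposition separating the scales at which the saddle point versus the edge dominate — essentially a rigorous saddle-point analysis of $\E \ZZZ_N^*(\beta)$ and $\E|\ZZZ_N^*(\beta)|^2$ with explicit error control, handling the transition regions near $\partial B_1$ where the two contributions are comparable.
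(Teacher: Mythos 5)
Your proposal takes a genuinely different route from the paper. The paper first establishes the distributional limit theorems for $\ZZZ_N(\beta)$ (CLT with complex Gaussian limit in $\{\sigma^2 \le 1/2\}$, stable law in $\{\sigma^2 > 1/2\}$), and then \emph{deduces} convergence in probability of $p_N(\beta)$ from these via the elementary Lemma~\ref{lem:proof_log_scale}: if $(Z_N - m_N)/v_N \toweaknoN Z$ with $Z$ atomless, then $\log|Z_N|/\log|v_N| \toprobabnoN 1$ when $|m_N| = O(|v_N|)$, and $\log|Z_N|/\log|m_N| \toprobabnoN 1$ when $|v_N| = o(|m_N|)$. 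You instead propose matching upper and lower bounds on $p_N(\beta)$ directly via moments and extreme-value theory. For phase $B_1$ your sketch (Chebyshev on the sum around its mean, yielding both bounds) is essentially the content of the unnumbered Proposition following Theorem~\ref{theo:no_zeros_B1} in the paper, so that part of your proposal is sound.

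There is, however, a concrete error in the $B_2$ treatment. You claim the triangle-inequality bound $|\ZZZ_N(\beta)| \le \sum_k \ee^{\sigma\sqrt n X_k}$ --- which by the real REM result gives $p_N(\beta) \le \max\brc{1 + \tfrac12\sigma^2,\ \sqrt2|\sigma|} + o_P(1)$ --- ``is already tight in $\overline B_2$.'' This is false on the part of $B_2$ with $1/\sqrt 2 < |\sigma| < \sqrt 2$: there the maximum equals $1 + \tfrac12 \sigma^2$, which is strictly larger than the asserted limit $\sqrt 2|\sigma|$ since $1 + \tfrac12\sigma^2 - \sqrt2|\sigma| = \tfrac12(|\sigma| - \sqrt 2)^2 > 0$. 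In other words, in that portion of $B_2$ the bulk of the sum, whose absolute values total $\approx N^{1+\frac12\sigma^2}$, must cancel down to the smaller order $N^{\sqrt2|\sigma|}$ of the extremal term; your proposal offers no mechanism for proving that cancellation, so the upper bound in $B_2$ is missing. (The paper gets it automatically from Theorem~\ref{theo:fluct_large_sigma} and Remark~\ref{rem:truncated_moment}, which identify $\ee^{\sigma\sqrt n b_N} \approx N^{\sqrt 2\sigma}$ as the true scale.) Note also that the same kind of phase cancellation is what you must establish in $B_3$, so the ``main obstacle'' you flag for $B_1$ and $B_3$ is in fact present in a large slice of $B_2$ as well.

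On the $L^q$ upgrade: invoking ``a lower tail bound $\P[|\ZZZ_N(\beta)| < \ee^{(p(\beta)-\eps)n}]$ decaying fast enough'' is not quite sufficient. Uniform integrability of $|p_N(\beta)|^q$ requires controlling arbitrarily small values of $|\ZZZ_N(\beta)|$ (recall the zeros of $\ZZZ_N$ become dense in $B_3$, so $p_N$ has a heavy lower tail), i.e.\ one needs a small-ball estimate $\P[|\ZZZ_N(\beta)| \le r]$ decaying in $r$ uniformly in $N$. The paper achieves this via a smoothing argument: adding an independent uniform random variable $U_r$ on the disc $B_r(0)$ and bounding the density of $\ee^{\sqrt n(\sigma X + i\tau Y)} + U_r$, which yields $\P[|\ZZZ_N(\beta)| \le r] \le C r^{\eps/20}$. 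This is the technically delicate part of the $L^q$ proof, and it is glossed over in your proposal.
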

Note that the limit in~\eqref{eq:limiting_log_partition_function_1} does not depend on $\rho$. However, we will see below that the fluctuations of $\ZZZ_N(\beta)$ do depend on $\rho$. The next theorem shows that $\ZZZ_N(\beta)$ satisfies the central limit theorem in the domain $\sigma^2<1/2$.
\begin{theorem}\label{theo:fluct_small_sigma} 
If $\sigma^2<1/2$ and $\tau\neq 0$, then
\begin{equation}\label{eq:fluct_small_sigma}
\frac{\ZZZ_N(\beta)-N^{1+\frac 1 2 (\sigma^2-\tau^2)+ i\sigma\tau\rho}}{N^{\frac 12+ \sigma^2}}\todistr N_{\C}(0,1).
\end{equation}
\end{theorem}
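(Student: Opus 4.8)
The plan is to view $\ZZZ_N(\beta)$, given by~\eqref{eq:partition_function}, as a sum of i.i.d.\ complex random variables and to apply a Lyapunov central limit theorem for triangular arrays; the whole point will be that the required moment condition holds precisely because $\sigma^2<1/2$. Put $W_k=\ee^{\sqrt n(\sigma X_k+i\tau Y_k)}$, so that $\ZZZ_N(\beta)=\sum_{k=1}^N W_k$ with $W_1,\dots,W_N$ i.i.d.; note this is a genuine triangular array since the law of $W_k$ depends on $N$ through $n=\log N$. Using the analytic continuation of the bivariate Gaussian moment generating function, $\E\,\ee^{aX_k+bY_k}=\ee^{\frac12(a^2+2ab\rho+b^2)}$ for $a,b\in\C$, one gets with $(a,b)=(\sqrt n\,\sigma,\,i\sqrt n\,\tau)$ that $\E W_k=N^{\frac12(\sigma^2-\tau^2)+i\sigma\tau\rho}$, hence $\E\ZZZ_N(\beta)=N\,\E W_1$ is exactly the centering constant in~\eqref{eq:fluct_small_sigma}. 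Likewise $(a,b)=(2\sqrt n\,\sigma,0)$ gives $\E|W_k|^2=N^{2\sigma^2}$ and $(a,b)=(2\sqrt n\,\sigma,\,2i\sqrt n\,\tau)$ gives $\E W_k^2=N^{2(\sigma^2-\tau^2)+4i\sigma\tau\rho}$, so
\[
\E|W_k-\E W_k|^2=N^{2\sigma^2}-N^{\sigma^2-\tau^2},\qquad
\E(W_k-\E W_k)^2=N^{2(\sigma^2-\tau^2)+4i\sigma\tau\rho}-N^{(\sigma^2-\tau^2)+2i\sigma\tau\rho}.
\]

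Dividing the sum of $N$ i.i.d.\ terms by $(N^{\frac12+\sigma^2})^2=N^{1+2\sigma^2}$, the normalized variance tends to $1$ (using $\sigma^2+\tau^2>0$), while the modulus of the normalized pseudo-variance is at most $N^{-2\tau^2}+N^{-\sigma^2-\tau^2}\to0$ since $\tau\neq0$. Passing to real and imaginary parts via $\Var(\Re W_k)=\tfrac12\big(\E|W_k-\E W_k|^2+\Re\,\E(W_k-\E W_k)^2\big)$ and the companion identities for $\Var(\Im W_k)$ and $\cov(\Re W_k,\Im W_k)$, this is exactly the statement that the covariance matrix of the real and imaginary parts of $\big(\ZZZ_N(\beta)-\E\ZZZ_N(\beta)\big)/N^{\frac12+\sigma^2}$ converges to $\tfrac12 I_2$, the covariance structure of $N_{\C}(0,1)$.

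It remains to verify a Lyapunov condition. Fix $\delta>0$; by convexity $\E|W_k-\E W_k|^{2+\delta}\le 2^{1+\delta}\big(\E|W_k|^{2+\delta}+|\E W_k|^{2+\delta}\big)$, and since $\E|W_k|^{2+\delta}=\E\,\ee^{(2+\delta)\sqrt n\,\sigma X_k}=N^{\frac12(2+\delta)^2\sigma^2}$ dominates $|\E W_k|^{2+\delta}$, we get $\E|W_k-\E W_k|^{2+\delta}=O\big(N^{\frac12(2+\delta)^2\sigma^2}\big)$. The Lyapunov ratio is therefore of order $N\cdot N^{\frac12(2+\delta)^2\sigma^2}\big/\,N^{(1+2\sigma^2)(1+\delta/2)}$, whose exponent simplifies to $\delta\big(\sigma^2(1+\tfrac{\delta}{2})-\tfrac12\big)$; when $\sigma^2<1/2$ this is strictly negative for all small enough $\delta>0$, so the ratio tends to $0$. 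The same bound controls the Lyapunov ratio for every real linear combination $\Re\big(\ee^{-i\theta}(W_k-\E W_k)\big)$, since the latter is at most $|W_k-\E W_k|$ in absolute value. Combining this with the second-moment computations above and the Cramér--Wold device (applied through the real triangular-array Lyapunov CLT and Slutsky's lemma) yields $\big(\ZZZ_N(\beta)-\E\ZZZ_N(\beta)\big)/N^{\frac12+\sigma^2}\todistr N_{\C}(0,1)$, which is~\eqref{eq:fluct_small_sigma}.

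The main obstacle, and the reason $\sigma^2<1/2$ is sharp, is precisely this moment bookkeeping: the summands $|W_k|=\ee^{\sqrt n\,\sigma X_k}$ are log-normally heavy-tailed, with all moments of polynomial size in $N$, so it is not obvious a priori that a second-moment based CLT can work; it does, but only because the exponent $\delta\big(\sigma^2(1+\delta/2)-\tfrac12\big)$ becomes negative for small $\delta$ exactly when $\sigma^2<1/2$. At $\sigma^2=1/2$ the Lyapunov condition fails, consistent with the stable (non-Gaussian) behavior known for the real REM beyond $\beta=\sqrt2/2$ recalled after~\eqref{eq:limiting_log_partition_function_real}. One should also remember throughout that the array is genuinely triangular, so the triangular-array versions of the CLT and of the moment conditions must be invoked rather than the classical i.i.d.\ ones.
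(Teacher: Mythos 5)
Your proof is correct and takes essentially the same route as the paper: view $\ZZZ_N(\beta)$ as a sum of i.i.d.\ complex summands forming a triangular array, compute $\E W_k$, $\E|W_k|^2$ and $\E W_k^2$ from the bivariate Gaussian moment generating function, conclude that the covariance matrix of $(\Re,\Im)$ of the normalized sum tends to $\tfrac12 I_2$, and then invoke a triangular-array CLT. The only substantive divergence is in the verification that the large summands are negligible: the paper checks the Lindeberg condition by truncating $|W_N|$ at a fixed $\eps$ and applying its Lemma~3.4 (which gives $\lim_N N\,\E[|W_N|^2\ind_{|W_N|>\eps}]=0$ under $\sigma^2<1/2$), whereas you check a Lyapunov $(2+\delta)$-moment condition via $\E|W_k|^{2+\delta}=N^{\frac12(2+\delta)^2\sigma^2}$ and observe that the ratio exponent $\delta\big(\sigma^2(1+\delta/2)-\tfrac12\big)$ is negative for small $\delta$ iff $\sigma^2<\tfrac12$, then pass to the $\R^2$-valued statement via Cram\'er--Wold. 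Both checks are standard; the Lyapunov route is slightly more explicit and makes the sharpness of the threshold $\sigma^2<1/2$ transparent without any truncation, at the cost of invoking moments of non-integer order. Your exponent bookkeeping is correct (including the identification $(\tfrac12+\sigma^2)(2+\delta)=(1+2\sigma^2)(1+\delta/2)$), and the Cram\'er--Wold step is legitimate because $|\Re(\ee^{-i\theta}(W_k-\E W_k))|\le|W_k-\E W_k|$ and the limiting variance of every nontrivial linear combination is strictly positive by the covariance computation.
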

\begin{remark}
If $\sigma^2<1/2$ and $\tau=0$, then the limiting distribution is real normal, as was shown in~\cite{bovier_kurkova_loewe}.
\end{remark}
\begin{remark}\label{rem:clt_simplification_B3}
If in addition to $\sigma^2<1/2$ we have $\sigma^2+\tau^2>1$, then $N^{1 + \frac
1 2 (\sigma^2-\tau^2)}=o(N^{\frac 12 + \sigma^2})$ and, hence, the theorem
simplifies to
\begin{equation}\label{eq:clt_simplification_B3}
\frac{\ZZZ_N(\beta)}{N^{\frac 12 +\sigma^2}}\todistr N_{\C}(0,1).
\end{equation}
Eq.~\eqref{eq:clt_simplification_B3} explains the difference between phases
$B_1$ and $B_3$: in phase $B_1$ the expectation of $\ZZZ_N(\beta)$ is of larger
order than the mean square deviation, in phase $B_3$ otherwise.
\end{remark}

In the boundary case $\sigma^2=1/2$, the limiting distribution is normal, but it has  truncated variance.
\begin{theorem}\label{theo:fluct_crit_sigma}  
If $\sigma^2=1/2$ and $\tau\neq 0$, then
$$
\frac{\ZZZ_N(\beta)-N^{1+\frac 1 2 (\frac 12-\tau^2)+ i\sigma \tau\rho}}{N}\todistr N_{\C}(0,1/2).
$$
\end{theorem}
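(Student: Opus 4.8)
The plan is to adapt the truncation-plus-CLT argument behind Theorem~\ref{theo:fluct_small_sigma}; the one new feature at the critical value $\sigma^2=1/2$ is that the natural truncation level $\sqrt{2n}$ now coincides with the saddle point of the Gaussian integral controlling $\E|\ZZZ_N(\beta)|^2$, so this integral loses exactly half of its mass, which is the origin of the variance $1/2$ rather than $1$. By the symmetries listed after~\eqref{eq:partition_function} we may take $\sigma=1/\sqrt2>0$. Set $W_k^*=\ee^{\sqrt n(\sigma X_k+i\tau Y_k)}\ind_{|X_k|<\sqrt{2n}}$ and $\ZZZ_N^*=\sum_{k=1}^N W_k^*$. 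Since $\P[\,|X_k|\ge\sqrt{2n}\ \text{for some}\ k\le N\,]\le 2N(1-\Phi(\sqrt{2n}))=O(n^{-1/2})$, we have $\P[\ZZZ_N\ne\ZZZ_N^*]\to0$, so by Slutsky's lemma it suffices to prove the statement with $\ZZZ_N$ replaced by the i.i.d.\ sum $\ZZZ_N^*$.

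Everything rests on three moment estimates, each obtained by completing the square in the density of $X_k$ and, for the mixed moments, using $Y_k\mid X_k=x\sim N_{\R}(\rho x,1-\rho^2)$. First, $\E W_k^*=N^{\frac12(\sigma^2-\tau^2)+i\sigma\tau\rho}+o(1)$ — here completing the square places the real part of the saddle at $\sqrt n\,\sigma=\sqrt{n/2}$, a distance of order $\sqrt n$ below $\sqrt{2n}$, so the truncation costs only a Gaussian tail, and after multiplication by the prefactor the truncation error in $\E W_k^*$ is $O(n^{-1/2}N^{-\tau^2(1-\rho^2)/2})=o(1)$, whence $\E\ZZZ_N^*=N^{1+\frac12(\frac12-\tau^2)+i\sigma\tau\rho}+o(N)$. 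Second, and crucially,
$$
\E|W_k^*|^2=\E\bigl[\ee^{2\sigma\sqrt n X_k}\ind_{|X_k|<\sqrt{2n}}\bigr]=\ee^{2n\sigma^2}\,\Phi\bigl(\sqrt{2n}-2\sigma\sqrt n\bigr)\,(1+o(1))=\tfrac12\,N\,(1+o(1)),
$$
because $2\sigma\sqrt n=\sqrt{2n}$ exactly when $\sigma^2=1/2$, so the truncated Gaussian integral equals $\Phi(0)=1/2$ (for $\sigma^2<1/2$ one instead gets $\Phi\to1$, hence variance $1$, which is the proof of Theorem~\ref{theo:fluct_small_sigma}). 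Third, the pseudo-variance is negligible: writing $\E[(W_k^*)^2]=\E[W_k^2]-\E[W_k^2\ind_{|X_k|\ge\sqrt{2n}}]$, the first term equals $N^{2(\sigma^2-\tau^2)}N^{4i\sigma\tau\rho}=N^{1-2\tau^2}N^{4i\sigma\tau\rho}=o(N)$ since $\tau\ne0$, while the truncation correction is $o(N)$ as well — immediate from the conditional-Gaussian damping factor $\ee^{-2n\tau^2(1-\rho^2)}$ when $|\rho|<1$, and in the borderline case $|\rho|=1$ (the complex REM~\eqref{eq:def_ZZZ_N}) it follows from the oscillatory-integral estimate $\bigl|\int_0^\infty\ee^{-u^2/2+icu}\,\dd u\bigr|=O(1/|c|)$ with $c=2\sqrt n\,\tau$, giving a correction of order $N\,n^{-1/2}$.

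Combining these with $|\E W_k^*|^2=O(N^{1/2-\tau^2})=o(N)$ gives $\tfrac1{N^2}\E|\ZZZ_N^*-\E\ZZZ_N^*|^2\to\tfrac12$ and $\tfrac1{N^2}\E[(\ZZZ_N^*-\E\ZZZ_N^*)^2]\to0$, i.e.\ $\Re\ZZZ_N^*$ and $\Im\ZZZ_N^*$ each have asymptotic variance $N^2/4$ and asymptotically vanishing covariance — precisely the covariance of $N$ times an $N_{\C}(0,1/2)$ variable. To turn this into weak convergence I would invoke the Lyapunov CLT for triangular arrays of i.i.d.\ $\R^2$-valued vectors with exponent $2+\delta=4$: since $\E|W_k^*|^4=\ee^{8n\sigma^2}\Phi(\sqrt{2n}-4\sigma\sqrt n)(1+o(1))=N^4\,\Phi(-\sqrt{2n})(1+o(1))=O(N^3n^{-1/2})$,
$$
\frac{N\,\E|W_k^*-\E W_k^*|^4}{(\var\ZZZ_N^*)^2}=\frac{O(N^4 n^{-1/2})}{O(N^4)}\longrightarrow 0,
$$
so $(\ZZZ_N^*-\E\ZZZ_N^*)/N\todistr N_{\C}(0,1/2)$, and Slutsky's lemma together with the reduction above finishes the proof. (The same ratio for the \emph{untruncated} $W_k$ is of order $N$, because $\E|W_k|^4=\ee^{8n\sigma^2}=N^4$ has no decaying tail factor — this is why the truncation cannot be dropped.) The only genuinely delicate points — and the reason this boundary case is stated separately — are the exact constant $1/2$ in the second estimate and the oscillatory cancellation needed in the third when $|\rho|=1$; everything else is a routine adaptation of Theorem~\ref{theo:fluct_small_sigma}.
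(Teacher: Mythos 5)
Your proof is correct, but it takes a genuinely different route from the paper's. The paper centers its argument on Theorem~\ref{theo:gnedenko} (Gnedenko--Kolmogorov/Rva\v{c}eva convergence of triangular arrays to infinitely divisible laws): it keeps the untruncated variables $W_{k,N}=N^{-1}\ee^{\sqrt n(\sigma X_k+i\tau Y_k)}$, observes that the classical Lindeberg condition fails (the tail contributes exactly $1/2$ to the second moment), truncates at a fixed small level $\eps$, shows the L\'evy measure $\nu$ vanishes while the $\eps$-truncated covariance converges to $\tfrac14 I_2$, and reads off the Gaussian limit from the L\'evy--Khintchine formula. You instead truncate at the extreme-value scale $|X_k|<\sqrt{2n}$, so that $\P[\ZZZ_N\neq\ZZZ_N^*]\to0$, compute the centered second moments directly (getting the same $\Phi(0)=1/2$ and the same $o(N)$ pseudo-variance), and then close with a fourth-moment Lyapunov CLT for the bounded truncated array; you correctly note that this only works because the truncation kills the $\ee^{8n\sigma^2}=N^4$ fourth moment, replacing it by $O(N^3n^{-1/2})$. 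Both routes are valid; the paper's has the advantage of re-using exactly the same machinery as in Theorems~\ref{theo:fluct_small_sigma} and~\ref{theo:fluct_large_sigma} (so the borderline nature of $\sigma^2=1/2$ shows up cleanly in whether $\nu$ vanishes or not), while yours is more elementary and self-contained (no appeal to the infinitely-divisible framework), at the cost of the extra fourth-moment computation and the preliminary reduction to the truncated sum. One small inaccuracy in your side remarks: for $\sigma^2<1/2$ the paper does not truncate at all but applies the plain Lindeberg CLT, so your parenthetical "$\Phi\to1$ ... which is the proof of Theorem~\ref{theo:fluct_small_sigma}" describes a heuristic rather than that proof; and the reduction from $\P[\ZZZ_N\neq\ZZZ_N^*]\to0$ to replacing $\ZZZ_N$ by $\ZZZ_N^*$ is not literally Slutsky's lemma but the standard observation that two sequences agreeing with probability tending to one share the same distributional limits. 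Neither affects correctness.
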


Next, we describe the fluctuations of $\ZZZ_N(\beta)$ in the domain
$\sigma^2>1/2$. Due to \eqref{eq:symmetries}, it is not a restriction of
generality to assume that $\sigma>0$. Let $b_N$ be a sequence such that $\sqrt
{2\pi}b_N \ee^{b_N^2/2} \sim N$ as $N\to\infty$. We can take
\begin{equation}\label{eq:def_wN}
b_N=\sqrt {2n}- \frac {\log (4\pi n)}{2\sqrt {2 n}}. 
\end{equation}
\begin{theorem}\label{theo:fluct_large_sigma}   
Let $\sigma>1/\sqrt 2$, $\tau\neq 0$, and $|\rho|<1$. Then,
\begin{equation}\label{eq:theo:fluct_large_sigma}
\frac{\ZZZ_N(\beta)-N\E[\ee^{\sqrt n(\sigma X + i\tau Y)}\ind_{X<b_N}]}{\ee^{\sigma \sqrt{n}b_N}}
\todistr
S_{\sqrt {2}/\sigma},
\end{equation}
where $S_{\alpha}$ denotes a complex isotropic $\alpha$-stable random
variable with a characteristic function of the form $\E [\ee^{i \Re
(S_{\alpha}\overline z)}]=\ee^{-\mathrm{const} \cdot |z|^{\alpha}}$, $z\in\C$.
\end{theorem}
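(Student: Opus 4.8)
The plan is to establish Theorem~\ref{theo:fluct_large_sigma} by the classical route for convergence to stable laws: truncate the summands, identify the tail behaviour of a single truncated term, and invoke the general limit theorem for triangular arrays (or, equivalently, work directly with characteristic functions). Write $W_k = \ee^{\sqrt n(\sigma X_k + i\tau Y_k)}$ and introduce the truncation at the level $b_N$ appearing in the statement, i.e. split $W_k = W_k \ind_{X_k < b_N} + W_k \ind_{X_k \geq b_N}$. First I would check that the ``large'' part is negligible: since $\P[X_k \geq b_N] \sim 1/N$ by the choice of $b_N$, with high probability \emph{no} index has $X_k \geq b_N$, and hence with probability tending to $1$ the sum $\ZZZ_N(\beta)$ coincides with its truncated version $\ZZZ_N^{*}(\beta) = \sum_{k=1}^N W_k \ind_{X_k < b_N}$; this is exactly Derrida's truncation argument and it reduces the problem to a sum of i.i.d.\ bounded-modulus terms.

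The core computation is the tail of the complex random variable $W := \ee^{\sqrt n(\sigma X + i\tau Y)}\ind_{X < b_N}$, or rather its centred version $W - \E W$. Conditioning on $X = x$, the variable $Y$ is Gaussian with mean $\rho x$ and variance $1-\rho^2$, so $\E[\ee^{i\tau\sqrt n Y}\mid X=x] = \ee^{i\tau\rho\sqrt n\, x}\ee^{-\tau^2(1-\rho^2)n/2}$; the key point, and the reason the hypothesis $|\rho|<1$ is needed, is that this factor has modulus $\ee^{-\tau^2(1-\rho^2)n/2}$, strictly less than $1$, which kills the ``coherent'' contribution and leaves a genuinely oscillating sum. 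I would then compute, for a test point $z \in \C$, the characteristic-function increment $\E[\ee^{i\Re(z\overline{(W-\E W)}/\ee^{\sigma\sqrt n b_N})}] - 1$ and show, via a saddle-point / Laplace analysis of the integral over $x$ near $x = b_N$ (the dominant region precisely because $\sigma > 1/\sqrt 2$ forces the boundary of the truncation interval to dominate), that $N$ times this increment converges to $-\mathrm{const}\cdot|z|^{\sqrt 2/\sigma}$. The exponent $\alpha = \sqrt 2/\sigma$ arises because near $x = b_N \approx \sqrt{2n}$ one has $|W| = \ee^{\sigma\sqrt n x}$ ranging up to $\ee^{\sigma\sqrt n b_N} \approx N^{\sigma\sqrt 2}$ while the Gaussian density contributes $\ee^{-x^2/2} \approx 1/N$, giving a power-law tail $\P[|W| > r] \asymp r^{-\sqrt 2/\sigma}$ for the relevant range of $r$; isotropy of the limit follows because the phase $\ee^{i(\sigma\tau\rho + \tau\sqrt n\,(Y-\rho X))\sqrt n}$ is, after the truncation, asymptotically uniformly distributed on the circle (here the mismatch between the scales of $X$ and $Y$ in the exponent and the non-degeneracy $|\rho|<1$ enter again). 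Standard theory (e.g.\ \cite[Theorem~1.4.5 and the surrounding discussion]{samorodnitsky_taqqu_book} together with the array CLT for stable domains of attraction) then yields \eqref{eq:theo:fluct_large_sigma}, with the centring constant $N\E[W]$ exactly as written.

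The main obstacle I anticipate is the saddle-point analysis of the complex integral $\E[\ee^{i\Re(z\overline W)}]$ uniformly in $z$ over compact sets, together with the bookkeeping of the centring term $N\E W$: one must show that subtracting $N\E W$ removes precisely the non-stable (divergent or drift) part of the sum without disturbing the stable fluctuation, which requires matching the truncated mean against the principal-value-type correction implicit in the stable limit. A secondary technical point is justifying the passage from $\ZZZ_N$ to $\ZZZ_N^{*}$ at the level of \emph{distributional} convergence after centring and scaling — the two differ by the rare large terms, and one needs that on the high-probability event where they coincide the scaled difference is genuinely zero, so that Slutsky's lemma applies. I would also need to handle the oscillatory cancellation carefully: unlike the real case, $|\E W|$ may be much smaller than $\E|W|$, so naive second-moment bounds are useless and the argument must proceed through the characteristic function throughout, as flagged in the introduction.
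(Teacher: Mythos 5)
Your overall route --- identifying the power-law tail of a single scaled summand by conditioning on $X$, using $|\rho|<1$ to force the conditional variance of $Y$ to be positive so that the phase $\tau\sqrt n Y$ becomes asymptotically uniform modulo $2\pi$ (whence isotropy), and then invoking the general limit theory for triangular arrays in a stable domain of attraction --- is essentially the paper's proof (which verifies the two conditions of the classical Gnedenko--Kolmogorov/Rva\v{c}eva theorem, stated as Theorem~\ref{theo:gnedenko}). The Lévy-measure computation via the log-normal density asymptotics and the uniform-angle argument match what the paper does.

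However, the opening reduction step is a genuine error. You claim that since $\P[X_k\geq b_N]\sim 1/N$, ``with high probability no index has $X_k\geq b_N$,'' so that $\ZZZ_N=\ZZZ_N^*$ and one is reduced to bounded summands. This does not follow: $b_N$ is chosen so that $N\P[X>b_N]\to 1$, so the number of exceedances converges in law to a Poisson$(1)$ variable and $\P[\ZZZ_N\neq\ZZZ_N^*]\to 1-\ee^{-1}$, which is bounded away from $0$. (You are conflating $b_N$ with Derrida's truncation level $\sqrt{2n}$, for which $N\P[X>\sqrt{2n}]\to 0$; but even at $\sqrt{2n}$ the scaled summands are of order $(4\pi n)^{\sigma/(2\sqrt 2)}$, not bounded.) Worse, the exceedances are not negligible even on the event that they occur: after scaling by $\ee^{\sigma\sqrt n b_N}$ a term with $X_k>b_N$ has modulus $>1$ and contributes macroscopically; discarding it would change the limiting Lévy measure from the full isotropic $\alpha$-stable measure to its restriction to the unit disc, hence give the wrong limit. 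So Slutsky cannot rescue this step.

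The fix is simply to drop the truncation of the sum entirely. Apply the triangular-array theorem to the un-truncated normalized sum $\sum_k W_{k,N}$ with $W_{k,N}=\ee^{\sqrt n(\sigma X_k+i\tau Y_k-\sigma b_N)}$. That theorem already centers by the truncated means $\E[W_{k,N}\ind_{|W_{k,N}|<R}]$ (take $R=1$), which after unwinding the scaling is precisely $N\E[\ee^{\sqrt n(\sigma X+i\tau Y)}\ind_{X<b_N}]$ --- exactly the centering that appears in the statement. One then only has to verify condition~(1) (convergence of $N\P[W_N\in\cdot]$ to the isotropic stable Lévy measure, via your log-normal $\times$ uniform-angle calculation) and condition~(2) (vanishing truncated covariance, via the bound $N\E[|W_N|^2\ind_{|W_N|\leq\eps}]\to C\eps^{2-\sqrt 2/\sigma}$ using Lemma~\ref{lem:saddle_point}). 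No separate negligibility argument, and no characteristic-function saddle point analysis of the full $\E[\ee^{i\Re(z\overline W)}]$, is needed.
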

\begin{remark}\label{rem:fluct_large_sigma_rho1}
If $\sigma>1/\sqrt 2$  and $\tau=0$, then  the limiting distribution is real totally skewed $\alpha$-stable; see~\cite{bovier_kurkova_loewe}. If $\sigma>1/\sqrt 2$  and $\rho=1$ (resp., $\rho=-1$), then it follows from Theorem~\ref{theo:weak_conv_large_beta} below that
\begin{equation}\label{eq:theo:fluct_large_sigma_rho_1}
\frac{\ZZZ_N(\beta)-N\E[\ee^{\beta \sqrt n X}\ind_{X<b_N}]}{\ee^{\beta \sqrt{n} b_N}}
\todistr
\tilde \zeta_{P} \left(\frac{\beta}{\sqrt 2}\right)
\;\;\;
\left(\text{resp., } \tilde \zeta_{P} \left(\frac{\bar \beta}{\sqrt 2}\right) \right).
\end{equation}

\end{remark}
\begin{remark}\label{rem:truncated_moment}
We will compute asymptotically the truncated expectation on the left-hand side of~\eqref{eq:theo:fluct_large_sigma} in Section~\ref{sec:proofs_fluct_sub} below.  We will obtain that under the assumptions of Theorem~\ref{theo:fluct_large_sigma},
\begin{align}
\frac{\ZZZ_N(\beta)}{\ee^{\sigma \sqrt{n}b_N}}
\todistr
S_{\sqrt {2}/\sigma}, &\;\;\; \text{ if } \sigma+|\tau|>\sqrt 2,\label{eq:trunc_expectation1}\\
\frac{\ZZZ_N(\beta) - N^{1+\frac 12 (\sigma^2-\tau^2)+ i\sigma\tau\rho}}{\ee^{\sigma \sqrt{n}b_N}}
\todistr
S_{\sqrt {2}/\sigma}, &\;\;\; \text{ if } \sigma+|\tau|\leq \sqrt 2. \label{eq:trunc_expectation2}
\end{align}
Similarly, if $\sigma>1/\sqrt 2$, but $\rho=1$, then we have
\begin{align}
\frac{\ZZZ_N(\beta)}{\ee^{\beta \sqrt{n}b_N}}
\todistr
\zeta_{P} \left(\frac{\beta}{\sqrt 2}\right), &\;\;\; \text{ if } \sigma+|\tau|>\sqrt 2,\label{eq:trunc_expectation1a}\\
\frac{\ZZZ_N(\beta) - N^{1+\frac 12 (\sigma^2-\tau^2)+ i\sigma\tau}}{\ee^{\beta \sqrt{n}b_N}}
\todistr
\zeta_{P} \left(\frac{\beta}{\sqrt 2}\right), &\;\;\; \text{ if } \sigma+|\tau|\leq \sqrt 2,\; \sigma\neq \sqrt 2. \label{eq:trunc_expectation2a}
\end{align}
For $\rho=-1$, we have to replace $\beta$ by $\bar \beta$.
\end{remark}

\subsection{Discussion, extensions and open questions}\label{sec:extensions}
The results on fluctuations are closely related, at least on the heuristic level, to
the results on the zeros of $\ZZZ_N$.  In
Section~\ref{subsec:fluct}, we claimed that regardless of the value of $\beta\neq 0$ we can
find normalizing constants $m_N(\beta)\in\C$, $v_N(\beta)>0$ such that
$$
\frac{\ZZZ_N(\beta)-m_N(\beta)}{v_N(\beta)}\todistr Z(\beta)
$$
for some non-degenerate random variable $Z(\beta)$. It turns out that in phase
$B_1$ the sequence $m_N(\beta)$ is of larger order than $v_N(\beta)$, which
suggests that there should be no zeros in this phase. In phases $B_2$ and $B_3$,
 the sequence $m_N(\beta)$ is of smaller order than $v_N(\beta)$, which does not
rule out the possibility of zeros in these phases. One way to guess the density
of zeros in phases $B_2$ and $B_3$ is to look more closely at the correlations
of the process $\ZZZ_N$. In phase $B_3$, it can be seen from
Theorem~\ref{theo:weak_conv_small_beta} below that $\ZZZ_N(\beta_1)$ and
$\ZZZ_N(\beta_2)$ become asymptotically decorrelated if the distance between
$\beta_1$ and $\beta_2$ is of order larger than $1/\sqrt n$. This suggests that
the distances between the close zeros in phase $B_3$ should be of order $1/\sqrt
n$ and hence, the density of zeros  should be of order $n$. Similarly, in phase
$B_2$ the variables $\ZZZ_N(\beta_1)$ and $\ZZZ_N(\beta_2)$ remain non-trivially
correlated at distances of order  $1$ by Theorem~\ref{theo:weak_conv_large_beta}
below, which suggests that the density of zeros in this phase should be of order
$1$.

An additional motivation for studying $\ZZZ_N$ comes from its connection to the
empirical characteristic function. Given an i.i.d.\ standard normal sample
$X_1,\ldots,X_N$, the empirical characteristic function is defined by
$c_N(\beta)=\sum_{k=1}^N \ee^{i \beta X_k}$. We have $\ZZZ_N(\beta)=c_N(-i\sqrt n
\beta)$. The limit behavior of the stochastic process $\{c_N(\beta) \colon
\beta\in\R\}$ without rescaling $\beta$ by the factor $\sqrt n$ has been much
studied; see, e.g.,~\cite{feuerverger_mureika,csoergoe}. There has been
also interest in the behavior of $R_N=\inf \{\beta>0 \colon \Re c_N(\beta)=0\}$, the
first real zero of $\Re c_N$; see~\cite{heathcote_huesler,huesler}. In
particular, it has been shown in~\cite[Corollary~4.5]{heathcote_huesler} that, for all $t\in \R$,
$$
\lim_{N\to\infty}\P[R_N^2-n < 2t]=\Phi(-\sqrt 2 e^{-t}).
$$
Hence, the first real zero of $\Re \ZZZ_N(\beta)$ restricted to $\beta\in i \R$ is located near $i$ with high probability. This is exactly the point where the imaginary axis meets the set $B_3$.

It is possible to extend or strengthen our results in several directions. The
statements of Theorem~\ref{theo:zeros_in_B3} and Theorem~\ref{theo:log_scale}
should hold almost surely, although it seems difficult to prove this.
Several authors considered models  involving sums of random exponentials
generalizing the REM; see~\cite{ben_arous_bogachev_molchanov, bogachev_weibull,
janssen, cranston_molchanov}. They analyze the case of real $\beta$ only. We
believe that our results (both on zeros and on fluctuations) can be extended, with
appropriate modifications, to these models.

\section{Proofs of the results on fluctuations} \label{sec:proofs_fluct}

\subsection{Truncated exponential moments} We will often need estimates for the
truncated exponential moments of the normal distribution. In the next lemmas, we
denote by $X$ a real standard normal random variable. Let $\Phi(z)=\frac{1}{\sqrt{2\pi}}\int_{-\infty}^z \ee^{-\frac{x^2}2}\dd x$ be the distribution function of $X$. It is well-known that
\begin{equation}\label{eq:Phi_asympt}
\Phi(z)
=
-\frac {1+o(1)} {\sqrt {2\pi} z} \ee^{-\frac {z^2}2},
\quad
z\to -\infty.
\end{equation}
The normal distribution function $\Phi$ can be extended as an analytic function to the entire complex plane. We need an extension of~\eqref{eq:Phi_asympt} to the complex case. 
\begin{lemma}\label{lem:Phi_asympt_complex}
Fix some $\eps>0$. The following holds as $|z|\to\infty$, $z\in\C$:
\begin{equation}\label{eq:Phi_asympt_complex}
\Phi(z)=
\begin{cases}
-\frac{1+o(1)}{\sqrt{2\pi}z} \ee^{-\frac{z^2}2},
& \text{ if } |\arg z|>\frac{\pi}{4}+\eps,\\
1-\frac{1+o(1)}{\sqrt{2\pi}z} \ee^{-\frac{z^2}2},
& \text{ if } |\arg z|<\frac{3\pi}{4}-\eps.
\end{cases}
\end{equation}
In particular, $\Phi(z)\to 1$ if $|z|\to\infty$ and $|\arg z|<\frac{\pi}{4}-\eps$.
\end{lemma}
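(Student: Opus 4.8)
The plan is to reduce the statement to the reflection identity $\Phi(z)+\Phi(-z)=1$ together with the classical asymptotic expansion of the error function, and to read off the ``in particular'' clause from the sign of $\Re(z^2)$.

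First I would record that $\Phi(z)+\Phi(-z)=1$ for all $z\in\C$: both sides are entire and agree on $\R$, hence everywhere. The map $z\mapsto -z$ sends the first sector $\{|\arg z|>\frac{\pi}{4}+\eps\}$ into the second sector $\{|\arg z|<\frac{3\pi}{4}-\eps\}$ (if $\arg z\in(\frac{\pi}{4}+\eps,\pi]$ then $\arg(-z)=\arg z-\pi\in(-\frac{3\pi}{4}+\eps,0]$, and symmetrically for $\arg z\in[-\pi,-\frac{\pi}{4}-\eps)$). So it suffices to prove the second line of \eqref{eq:Phi_asympt_complex}; the first line then follows from $\Phi(z)=1-\Phi(-z)$, since $1-\bigl(1-\frac{1+o(1)}{\sqrt{2\pi}(-z)}\ee^{-z^2/2}\bigr)=-\frac{1+o(1)}{\sqrt{2\pi}z}\ee^{-z^2/2}$.

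For the second line I would use $1-\Phi(z)=\tfrac12\operatorname{erfc}(z/\sqrt2)$ and the classical expansion $\operatorname{erfc}(\zeta)=\frac{\ee^{-\zeta^2}}{\sqrt\pi\,\zeta}(1+o(1))$ as $\zeta\to\infty$ with $|\arg\zeta|\le\frac{3\pi}{4}-\eps$; taking $\zeta=z/\sqrt2$ gives $1-\Phi(z)=\frac{1+o(1)}{\sqrt{2\pi}\,z}\ee^{-z^2/2}$, which is the claim. For a self-contained argument the same estimate follows from a single integration by parts: $1-\Phi(z)=\frac{1}{\sqrt{2\pi}}\int_z^\infty\ee^{-w^2/2}\,\dd w=\frac{1}{\sqrt{2\pi}}\Big(\frac{\ee^{-z^2/2}}{z}-\int_z^\infty\frac{\ee^{-w^2/2}}{w^2}\,\dd w\Big)$, where the contour runs from $z$ to $\infty$ along a ray $w=z+t\ee^{i\theta_0}$, $t\ge0$, with $\theta_0=\theta_0(\arg z)$ chosen so that $\Re(\ee^{i\theta_0}z)\ge0$ and $|\theta_0|\le\frac{\pi}{4}-\eps'$; such a $\theta_0$ exists precisely because $|\arg z|<\frac{3\pi}{4}-\eps$. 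Along this ray $|\ee^{-w^2/2}|\le\ee^{-\Re(z^2)/2}\ee^{-c t^2}$ and $|w|\ge c'|z|$, so the remainder integral is $O(|z|^{-2}\ee^{-\Re(z^2)/2})=o(|z|^{-1}\ee^{-\Re(z^2)/2})$, negligible against the main term $\ee^{-z^2/2}/z$; Cauchy's theorem (deforming between two such rays across an arc at infinity, where $\Re w^2\to+\infty$) shows the ray integral is independent of $\theta_0$, hence analytic in the sector and equal to $1-\Phi(z)$ because it is so for $z>0$.

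Finally, if $|\arg z|<\frac{\pi}{4}-\eps$ then $\Re(z^2)=|z|^2\cos(2\arg z)\ge|z|^2\sin(2\eps)\to\infty$, so $|\ee^{-z^2/2}/z|=\ee^{-\Re(z^2)/2}/|z|\to0$ and the second line yields $\Phi(z)\to1$. The only genuine obstacle lies in the self-contained route: making the choice $\theta_0(\arg z)$ uniform over the whole sector, which needs care near $\arg z=\pm\frac{\pi}{2}$ (where horizontal rays no longer work) and near $\arg z=\pm(\frac{3\pi}{4}-\eps)$ (where the admissible ray is almost tangent to a level line of $\Re w^2$). If one instead quotes the $\operatorname{erfc}$ expansion, essentially nothing remains to be done.
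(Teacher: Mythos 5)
Your proof is correct and rests on the same two ingredients as the paper's: the classical asymptotic expansion of $\operatorname{erfc}$ on a sector $|\arg\zeta|\le\frac{3\pi}{4}-\eps$ (Abramowitz--Stegun, Eq.~7.1.23), together with the reflection identity $\Phi(z)=1-\Phi(-z)$. The paper invokes the expansion for the first case and derives the second by reflection, whereas you do the reverse and also sketch a self-contained integration-by-parts variant, but the substance is the same.
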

\begin{remark}
We take the principal value of the argument, ranging in $(-\pi,\pi]$ and having
a jump on the negative half-axis. In the domain $\frac{\pi}{4}+\eps <|\arg
z|<\frac{3\pi}{4}-\eps$ both asymptotics in~\eqref{eq:Phi_asympt_complex} can be
applied. To see that they give the same result, note that
$|\frac{1}{z}\ee^{-\frac {z^2}2}|\to\infty$ there.
\end{remark}
\begin{proof}[Proof of Lemma~\ref{lem:Phi_asympt_complex}]
For the first case of~\eqref{eq:Phi_asympt_complex}, see~\citep[Eq.~7.1.23 on
p.~298]{abramowitz_stegun}. The second case
of~\eqref{eq:Phi_asympt_complex} follows from the identity $\Phi(z)=1-\Phi(-z)$.
\end{proof}

In the next lemmas, we record several simple facts on the truncated exponential moments which we will often use  later. Note that $\E [\ee^{wX}]=\ee^{\frac{w^2}2}$ for all $w\in\C$.
\begin{lemma}\label{lem:trunc_moment_Phi}
Let $w\in \C$, $a\in\R$. Then, $\E [\ee^{w X}\ind_{X < a}]=\ee^{\frac{w^2}2} \Phi(a-w)$.
\end{lemma}
\begin{proof}
For $w\in \R$, we have
$$
\E [\ee^{w X}\ind_{X < a}]
=
\frac{1}{\sqrt {2\pi}} \int_{-\infty}^{a} \ee^{wz-\frac{z^2}2}\dd z
=
\frac{1}{\sqrt {2\pi}} \ee^{\frac{w^2}2} \int_{-\infty}^{a} \ee^{-\frac{(z-w)^2}2}\dd z
=
\ee^{\frac{w^2}2} \Phi(a-w).
$$
For $w\in\C$, this holds by analytic continuation.
\end{proof}

\begin{lemma}\label{lem:saddle_weak}
Let $w,a\in\R$. The following estimates hold.
\begin{enumerate}
\item \label{lem:saddle_weak1} If $w>a$, then $\E [\ee^{w X}\ind_{X < a}]<\ee^{aw-\frac{a^2}{2}}$.
\item \label{lem:saddle_weak2} If $w<a$, then $\E [\ee^{w X}\ind_{X > a}]<\ee^{aw-\frac{a^2}{2}}$.
\end{enumerate}
\end{lemma}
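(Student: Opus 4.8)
The plan is to reduce both estimates to a Gaussian integral over a half-line and exploit that the truncation boundary $a$ lies on the ``wrong side'' of the saddle point $w$ of the exponent $z\mapsto wz-\frac{z^2}{2}$, which on the half-line in question is monotone.

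For the first estimate, write
$$
\E[\ee^{wX}\ind_{X<a}]=\frac{1}{\sqrt{2\pi}}\int_{-\infty}^{a}\ee^{wz-\frac{z^2}{2}}\dd z,
$$
and complete the square around the endpoint $a$ rather than around the saddle $w$. The algebraic identity $wz-\frac{z^2}{2}=\left(wa-\frac{a^2}{2}\right)+(w-a)(z-a)-\frac{(z-a)^2}{2}$ gives, after the substitution $u=z-a$,
$$
\E[\ee^{wX}\ind_{X<a}]=\frac{\ee^{wa-a^2/2}}{\sqrt{2\pi}}\int_{-\infty}^{0}\ee^{(w-a)u-\frac{u^2}{2}}\dd u.
$$
Since $w>a$ and $u<0$ on the domain of integration, the linear term $(w-a)u$ is strictly negative, so the integrand is strictly dominated by $\ee^{-u^2/2}$; hence the integral is at most $\int_{-\infty}^{0}\ee^{-u^2/2}\dd u=\sqrt{2\pi}/2<\sqrt{2\pi}$, which yields the claimed strict inequality.

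For the second estimate I would simply apply the first to $-X$, which is again standard normal: $\E[\ee^{wX}\ind_{X>a}]=\E[\ee^{(-w)X}\ind_{X<-a}]$, and the hypothesis $w<a$ is exactly $-w>-a$, so the bound becomes $\ee^{(-a)(-w)-a^2/2}=\ee^{aw-a^2/2}$. Alternatively one repeats the computation above verbatim, the only change being that now $u>0$ while $w-a<0$, so again $(w-a)u<0$ and the same domination by $\ee^{-u^2/2}$ applies.

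There is no genuine obstacle here; the only point worth flagging is that one must complete the square at the boundary $a$ and not at the saddle $w$ — completing at $w$ reproduces the exact identity of Lemma~\ref{lem:trunc_moment_Phi} involving $\Phi$, whereas the clean exponential bound $\ee^{aw-a^2/2}$, valid precisely because the saddle sits outside the truncation region, is the whole content of this ``weak saddle'' estimate.
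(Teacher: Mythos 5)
Your proof is correct, and it is essentially the same computation as the paper's, made explicit. The paper first rewrites $\E[\ee^{wX}\ind_{X<a}]$ as $\ee^{w^2/2}\Phi(a-w)$ (Lemma~\ref{lem:trunc_moment_Phi}) and then invokes the standard Gaussian tail bound $\Phi(z)<\ee^{-z^2/2}$ for $z\le 0$; you instead complete the square at the truncation point $a$ and bound the resulting Gaussian integral over a half-line directly. Unwinding the two computations, they are the same identity $wz-\tfrac{z^2}2 = (wa-\tfrac{a^2}2)+(w-a)(z-a)-\tfrac{(z-a)^2}2$, so no new idea is involved; what you gain is that your argument is self-contained and does not presuppose the tail bound, while the paper's version routes through its already-stated $\Phi$-identity, which it reuses elsewhere. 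Your reduction of case (2) to case (1) via $X\mapsto-X$ coincides with the paper's.
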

\begin{proof}
Consider the case $w>a$. By Lemma~\ref{lem:trunc_moment_Phi}, $\E [\ee^{w X}\ind_{X < a}]=\ee^{\frac{w^2}2} \Phi(a-w)$.
Using the inequality $\Phi(z)<\ee^{-\frac{z^2}2}$ valid for $z\leq 0$, we obtain the statement of case~(1). Case~(2) can be reduced to case~(1) by the substitution $(X,w,a)\mapsto(-X,-w,-a)$.
\end{proof}
\begin{lemma}\label{lem:saddle_point}
Let $F(n)=\E [\ee^{w \sqrt n  X}\ind_{X<\sqrt n a(n)}]$, where  $w=u+iv\in\C$, $n>0$, and $a(n)$ is a real-valued function with $\lim_{n\to \infty}a(n)=a$.
The following hold, as $n\to \infty$:
\begin{equation}\label{eq:lemma_saddle}
F(n)
\sim
\begin{cases}
\frac{1}{\sqrt{2\pi n}(w-a)}\,
\ee^{n (a(n) w -\frac 12 a^2(n))},
& \text{ if } u+|v|>a,\\
\ee^{\frac 12  w^2 n},
& \text{ if } u+|v|<a.
\end{cases}
\end{equation}
If $w\in\R$ and $a(n)=w+\frac{c}{\sqrt n}+o(\frac 1 {\sqrt n})$, for some $c\in\R$, then
\begin{equation}\label{eq:lemma_saddle_crit_real}
F(n)
\sim
\Phi(c)\, \ee^{\frac 12 w^2 n}, \;\;\;n\to \infty.
\end{equation}
\end{lemma}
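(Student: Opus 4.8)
The plan is to use Lemma~\ref{lem:trunc_moment_Phi}, which gives the exact formula $F(n) = \ee^{\frac 12 w^2 n}\Phi(\sqrt n a(n) - \sqrt n w)$, and then to insert the complex asymptotics for $\Phi$ from Lemma~\ref{lem:Phi_asympt_complex}. Write $z = z(n) = \sqrt n(a(n) - w) = \sqrt n(a(n) - u) - i\sqrt n v$. The key is to read off $|\arg z|$ as $n\to\infty$: since $a(n)\to a$, the real part of $z/\sqrt n$ tends to $a - u$ and the imaginary part to $-v$. First I would treat the two non-critical cases. If $u + |v| < a$, then $a - u > |v| \geq 0$, so $z$ has positive real part of order $\sqrt n$ and $|\arg z| < \pi/4 - \eps$ eventually; by the last sentence of Lemma~\ref{lem:Phi_asympt_complex}, $\Phi(z)\to 1$, and $F(n)\sim \ee^{\frac 12 w^2 n}$, which is the second line of~\eqref{eq:lemma_saddle}. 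If $u + |v| > a$, I claim $|\arg z| > \pi/4 + \eps$ eventually: indeed $(a-u) < |v|$ means the point $(a-u, -v)$ lies strictly outside the cone $\{|\Im| \le \Re\}$, so its argument is bounded away from $[-\pi/4,\pi/4]$ (this uses $v \ne 0$ when $a - u \geq 0$, and is automatic when $a - u < 0$). Then the first case of~\eqref{eq:Phi_asympt_complex} gives $\Phi(z) = -\frac{1+o(1)}{\sqrt{2\pi}\, z}\ee^{-z^2/2}$, and substituting $z = \sqrt n(a(n) - w)$ and simplifying $\ee^{\frac 12 w^2 n}\ee^{-z^2/2} = \ee^{n(a(n)w - \frac12 a(n)^2)}$ yields the first line of~\eqref{eq:lemma_saddle}.

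For the critical real case $w = u \in \R$, $v = 0$, with $a(n) = w + c/\sqrt n + o(1/\sqrt n)$, the argument is even more direct: now $z(n) = \sqrt n(a(n) - w) = c + o(1)$ stays bounded, so no asymptotic expansion of $\Phi$ is needed — continuity of $\Phi$ gives $\Phi(z(n)) \to \Phi(c)$, and hence $F(n) \sim \Phi(c)\ee^{\frac 12 w^2 n}$, which is~\eqref{eq:lemma_saddle_crit_real}.

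The only genuinely delicate point is verifying the argument condition $|\arg z(n)| > \pi/4 + \eps$ in the case $u + |v| > a$ when $v \ne 0$ and $a - u$ could be positive but smaller than $|v|$: here one must check the point $(a-u, -|v|)$ lies a fixed angular distance outside the closed cone $|\Im| \le \Re$. Since $a - u < |v|$ strictly and both quantities are the limits of $\Re z/\sqrt n$ and $|\Im z|/\sqrt n$, for large $n$ we have $|\Im z| \geq \Re z + \delta\sqrt n$ for some fixed $\delta > 0$, which forces $\arg z$ into a region where $|\arg z| \geq \pi/4 + \eps$ for a suitable $\eps$; I would spell this elementary geometry out but it is routine. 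A minor bookkeeping remark: in the overlap region $\pi/4 + \eps < |\arg z| < 3\pi/4 - \eps$ both branches of~\eqref{eq:Phi_asympt_complex} are available and, as noted in the remark following Lemma~\ref{lem:Phi_asympt_complex}, agree because the correction term blows up; so there is no ambiguity in which formula to use. Everything else is algebraic simplification of exponents.
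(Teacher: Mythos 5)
Your proof is correct and follows essentially the same route as the paper: write $F(n)=\ee^{\frac12 w^2 n}\Phi(z(n))$ via Lemma~\ref{lem:trunc_moment_Phi}, note $z(n)\sim(a-u-iv)\sqrt n$, and split on the sign of $u+|v|-a$ to choose the branch of Lemma~\ref{lem:Phi_asympt_complex}. The only (harmless) variation is in the case $u+|v|<a$: the paper first applies the two-term expansion of $\Phi$ valid in the wider cone $|\arg z|<\tfrac{3\pi}{4}-\eps$ (so as to also record Remark~\ref{rem:saddle}), and then shows the correction is dominated, whereas you observe directly that $|\arg z|<\tfrac{\pi}{4}-\eps$ and so $\Phi(z)\to1$; both give the second line of~\eqref{eq:lemma_saddle}.
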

\begin{remark}\label{rem:saddle}
The second line in~\eqref{eq:lemma_saddle} can be generalized to the following formula valid in the case $u-|v|<a$:
\begin{equation}\label{eq:lem_saddle_complex_two_terms}
F(n)=\ee^{\frac 12 w^2 n}+\frac{1+o(1)}{\sqrt{2\pi n}(w-a)}\,
\ee^{n (a(n) w -\frac 12 a^2(n))}, \;\;\;n\to \infty.
\end{equation}
\end{remark}
\begin{proof}[Proof of Lemma~\ref{lem:saddle_point}]
Let  $z(n)=\sqrt n a(n)- w \sqrt n$. By Lemma~\ref{lem:trunc_moment_Phi}, we have
\begin{equation}\label{eq:F_lambda_Phi}
F(n)
=
\ee^{\frac12 w^2 n }  \Phi(\sqrt n a(n)-w\sqrt n)
=
\ee^{\frac12 w^2 n }  \Phi(z(n)).
\end{equation}
Note that $z(n)\sim (a-u-iv)\sqrt n$, as $n\to \infty$.

\noindent \textit{Case 1.} If $u+|v|>a$, then $|\arg z(n)|>\frac {\pi}{4}+\eps$,
for some $\eps>0$, and all sufficiently large $n$. Applying the first line
of~\eqref{eq:Phi_asympt_complex}, we arrive at the first line
of~\eqref{eq:lemma_saddle}.

\noindent\textit{Case 2.} If $u-|v|<a$, then $|\arg z(n)|<\frac {3\pi}{4}-\eps$,
for some $\eps>0$, and all sufficiently large $n$. Applying the second line
of~\eqref{eq:Phi_asympt_complex}, we get~\eqref{eq:lem_saddle_complex_two_terms}.

\noindent\textit{Case 2a.} If even the stronger condition $u+|v|<a$ holds, then
$\frac 12 (u^2-v^2)>au-\frac 12 a^2$ and hence, the first term
in~\eqref{eq:lem_saddle_complex_two_terms} asymptotically dominates the second
one. We obtain the second line of~\eqref{eq:lemma_saddle}.

\noindent\textit{Case 3.} If $a=w\in\R$ and $a(n)=w+\frac{c}{\sqrt n}+o(\frac 1
{\sqrt n})$, for some $c\in\R$, then $\lim_{n\to\infty} z(n)=c$ and we arrive
at~\eqref{eq:lemma_saddle_crit_real}.

\end{proof}

\begin{lemma}\label{lem:moments_Wn}
If $(X,Y)$ is a real Gaussian vector with standard marginals and correlation $\rho$, then, for $s,a\in\R$,
$$
\E[\ee^{s(\sigma X + i \tau Y)} \ind_{X<a} ] = \ee^{-s^2\tau^2(1-\rho^2)/2} \E [\ee^{s(\sigma+i\tau\rho)X}\ind_{X<a}].
$$
In particular, $\E[\ee^{s(\sigma X + i \tau Y)}]=\ee^{s^2(\sigma^2-\tau^2+2i\sigma \tau \rho)/2}$.
\end{lemma}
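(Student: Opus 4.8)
The plan is to reduce the bivariate computation to the univariate truncated moment identity of Lemma~\ref{lem:trunc_moment_Phi} by conditioning on $X$. First I would write $Y = \rho X + \sqrt{1-\rho^2}\, Z$, where $Z \sim N_{\R}(0,1)$ is independent of $X$; this is the standard Gaussian regression decomposition, valid because $(X,Y)$ is centered bivariate Gaussian with unit marginals and correlation $\rho$. Substituting into the exponent gives
\[
s(\sigma X + i\tau Y) = s\big((\sigma + i\tau\rho)X + i\tau\sqrt{1-\rho^2}\, Z\big),
\]
so that $\ee^{s(\sigma X + i\tau Y)} = \ee^{s(\sigma+i\tau\rho)X} \cdot \ee^{is\tau\sqrt{1-\rho^2}\, Z}$, and the indicator $\ind_{X<a}$ depends on $X$ only.

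Next I would take expectations, using independence of $Z$ and $X$ to factor:
\[
\E[\ee^{s(\sigma X + i\tau Y)}\ind_{X<a}]
= \E[\ee^{s(\sigma+i\tau\rho)X}\ind_{X<a}] \cdot \E[\ee^{is\tau\sqrt{1-\rho^2}\, Z}].
\]
The second factor is the Gaussian moment generating function evaluated at the purely imaginary argument $w = is\tau\sqrt{1-\rho^2}$, which by the analytic continuation of $\E[\ee^{wZ}] = \ee^{w^2/2}$ (noted before Lemma~\ref{lem:trunc_moment_Phi}) equals $\ee^{-s^2\tau^2(1-\rho^2)/2}$. This yields exactly the claimed identity. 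For the ``in particular'' statement, one sets $a = +\infty$ (i.e., drops the indicator), so the remaining factor is $\E[\ee^{s(\sigma+i\tau\rho)X}] = \ee^{s^2(\sigma+i\tau\rho)^2/2}$, and expanding $(\sigma+i\tau\rho)^2 = \sigma^2 - \tau^2\rho^2 + 2i\sigma\tau\rho$ and combining with the prefactor $\ee^{-s^2\tau^2(1-\rho^2)/2}$ gives the exponent $s^2(\sigma^2 - \tau^2 + 2i\sigma\tau\rho)/2$.

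There is essentially no obstacle here: the only point requiring a word of care is the justification that $\E[\ee^{is\tau\sqrt{1-\rho^2}\, Z}] = \ee^{-s^2\tau^2(1-\rho^2)/2}$, which is just the characteristic function of a standard normal (equivalently, the identity $\E[\ee^{wZ}]=\ee^{w^2/2}$ extended from real to complex $w$, which holds by analytic continuation since both sides are entire in $w$ and agree on $\R$). The edge cases $\rho = \pm 1$ cause no trouble: then $\sqrt{1-\rho^2}=0$, the auxiliary variable $Z$ disappears, and both sides reduce trivially to $\E[\ee^{s(\sigma\pm i\tau)X}\ind_{X<a}]$.
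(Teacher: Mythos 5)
Your proof is correct and follows the same route as the paper: decompose $Y=\rho X+\sqrt{1-\rho^2}\,Z$ with $Z$ independent of $X$, factor the expectation by independence, and evaluate the $Z$-factor via $\E[\ee^{wZ}]=\ee^{w^2/2}$ at the imaginary argument $w=is\tau\sqrt{1-\rho^2}$. The extra remarks on analytic continuation, the ``in particular'' derivation, and the $\rho=\pm1$ edge cases are consistent with the paper, just spelled out more fully.
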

\begin{proof}
We have a distributional equality $(X,Y)\overset{d}{=}(X, \rho X+\sqrt{1-\rho^2} W)$, where $(X,W)$ are independent standard normal real random variables. It follows that
\begin{align*}
\E[\ee^{s(\sigma X + i \tau Y)} \ind_{X<a} ]
&=
\E [\ee^{s(\sigma+i\tau\rho)X+ is\tau \sqrt{1-\rho^2}W} \ind_{X<a} ]\\
&=
\ee^{-s^2\tau^2(1-\rho^2)/2} \E [\ee^{s(\sigma+i\tau\rho)X} \ind_{X<a} ],
\end{align*}
where we have used that $\E [\ee^{tW}]=\ee^{t^2/2}$ and that $W$ and $X$ are independent.
\end{proof}

\subsection{Proof of Theorems~\ref{theo:fluct_small_sigma},
\ref{theo:fluct_crit_sigma}, \ref{theo:fluct_large_sigma}}
\label{sec:proofs_fluct_sub} The main tool to prove the results on the
fluctuations is the  summation theory of triangular arrays of random vectors;
see~\cite{gnedenko_kolmogorov_book} and~\cite{meerschaert_book}. The following
theorem can be found in~\cite[\S25]{gnedenko_kolmogorov_book} in the
one-dimensional setting and in~\cite{rvaceva} or in~\cite[Theorem~3.2.2]{meerschaert_book} in the $d$-dimensional setting. Denote by
$|\cdot|$ the Euclidean norm and by $\langle\cdot,\cdot\rangle$ the Euclidean
scalar product.
\begin{theorem}\label{theo:gnedenko}
For every $N\in\N$, let a series $W_{1,N},\ldots,W_{N,N}$ of independent random
$d$-dimensional vectors be given. 
 Assume that, for some locally finite measure $\nu$ on $\R^d\bsl \{0\}$, and
some positive semidefinite matrix $\Sigma$, the following conditions hold:
\begin{enumerate}
\item \label{cond:gned1} $\lim_{N\to\infty} \sum_{k=1}^N \P[W_{k,N}\in
B]=\nu(B)$, for every Borel set $B\subset \R^d\bsl\{0\}$ such that $\nu(\partial
B)=0$.

\item \label{cond:gned2} The following limits exist:
$$
\Sigma
=
\lim_{\eps\downarrow 0} \limsup_{N\to\infty} \sum_{k=1}^N \Var [W_{k,N} \ind_{|W_{k,N}|<\eps}]
=
\lim_{\eps\downarrow 0} \liminf_{N\to\infty} \sum_{k=1}^N \Var [W_{k,N} \ind_{|W_{k,N}|<\eps}].
$$
\end{enumerate}
Then, the random vector $S_N:=\sum_{k=1}^N (W_{k,N}-\E [W_{k,N}
\ind_{|W_{k,N}|<R}])$ converges, as $N\to\infty$, to an infinitely divisible
random vector $S$ whose characteristic function is given by the
L\'evy--Khintchine representation
$$
\log \E [\ee^{i \langle t, S\rangle}]=-\frac 12 \langle t, \Sigma t\rangle + \int_{\R^d} (\ee^{i \langle t, s \rangle}-1- i \langle t, s \rangle \ind_{|s|<R}) \nu(\dd s),
\qquad
t\in\R^d.
$$
Here, $R>0$ is any number such that $\nu$ does not charge the set $\{s\in\R^d \colon |s|=R\}$.
\end{theorem}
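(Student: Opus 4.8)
The plan is to prove the convergence $S_N\todistrnoN S$ by showing pointwise convergence of characteristic functions and invoking the L\'evy continuity theorem. Since the hypotheses force $\nu$ to be a L\'evy measure (integrating $1\wedge|s|^2$) and $\Sigma$ to be symmetric positive semidefinite, the candidate limit
$$
\psi(t)=-\tfrac12\langle t,\Sigma t\rangle+\int_{\R^d}\brc{\ee^{i\langle t,s\rangle}-1-i\langle t,s\rangle\ind_{|s|<R}}\,\nu(\dd s)
$$
is a genuine L\'evy--Khintchine exponent, hence continuous at $0$; so it suffices to prove $\E[\ee^{i\langle t,S_N\rangle}]\to\ee^{\psi(t)}$ for every fixed $t\in\R^d$.

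First I would record the uniform asymptotic negligibility of the array, $\max_{1\le k\le N}\P[|W_{k,N}|>\delta]\to 0$ for every $\delta>0$; in the references this is either assumed or extracted from \eqref{cond:gned1}--\eqref{cond:gned2}. Next, fix a small $\eps\in(0,R)$ with $\nu(\{|s|=\eps\})=0$ and split each summand as $W_{k,N}=W_{k,N}'+W_{k,N}''$ with $W_{k,N}'=W_{k,N}\ind_{|W_{k,N}|\le\eps}$, so that $S_N=T_N'+T_N''-c_{\eps,N}$, where $T_N'=\sum_k(W_{k,N}'-\E W_{k,N}')$, $T_N''=\sum_k W_{k,N}''$, and $c_{\eps,N}=\sum_k\E[W_{k,N}\ind_{\eps<|W_{k,N}|<R}]$ is deterministic. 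The large-jump part I would handle through \eqref{cond:gned1}: by a standard thinning argument the point process $\sum_k\ind_{\{W_{k,N}''\ne 0\}}\delta_{W_{k,N}''}$ converges to a Poisson point process on $\{|s|>\eps\}$ with intensity $\nu$, so $T_N''$ converges to a random vector with log-characteristic function $\int_{|s|>\eps}(\ee^{i\langle t,s\rangle}-1)\,\nu(\dd s)$, while $c_{\eps,N}\to\int_{\eps<|s|<R}s\,\nu(\dd s)$, again by \eqref{cond:gned1}. The centred small-jump part $T_N'$, whose summands are bounded by $2\eps$, I would treat by a Lindeberg-type central limit theorem for triangular arrays: combined with uniform asymptotic negligibility this gives asymptotic normality with covariance $\sum_k\Var[W_{k,N}']$, which by \eqref{cond:gned2} is close to $\Sigma$ once $\eps$ is small and $N$ large. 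Since $T_N'$ and $T_N''$ are independent, characteristic functions multiply, and after subtracting $c_{\eps,N}$ the large-jump exponent assembles into the compensated form $\int_{|s|>\eps}(\ee^{i\langle t,s\rangle}-1-i\langle t,s\rangle\ind_{|s|<R})\,\nu(\dd s)$.

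Finally I would let $\eps\downarrow 0$. Here the delicate point is that \eqref{cond:gned2} controls only the iterated limit $\lim_{\eps\downarrow 0}\limsup_N$, not $\lim_N$ for fixed $\eps$; the standard remedy is a diagonal choice $\eps=\eps_N\downarrow 0$ (tending to $0$ slowly enough), for which one must check that the point-process convergence and the Lindeberg CLT still apply. Along such a choice the small-jump covariance tends to $\Sigma$ and, because the compensated integrand is $O(|s|^2)$ near the origin with $\int_{|s|\le 1}|s|^2\,\nu(\dd s)<\infty$, the large-jump exponent over $\{|s|>\eps_N\}$ converges to the full integral in $\psi(t)$. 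A converging-together argument then yields $\E[\ee^{i\langle t,S_N\rangle}]\to\ee^{\psi(t)}$, and L\'evy continuity completes the proof. The main obstacle is precisely this joint handling of the two limits: one needs the error in the small-jump CLT to be $o(1)$ as $N\to\infty$ \emph{uniformly enough in $\eps$} that it survives the passage $\eps=\eps_N\to 0$, and one needs \eqref{cond:gned2} to be leveraged both for the covariance $\Sigma_\eps\to\Sigma$ and for the integrability $\int_{|s|\le 1}|s|^2\,\nu(\dd s)<\infty$. These are exactly the estimates carried out in \cite[\S25]{gnedenko_kolmogorov_book} in dimension one and in \cite[Theorem~3.2.2]{meerschaert_book} and \cite{rvaceva} in dimension $d$; for the present purposes I would invoke those references rather than reproduce the computations.
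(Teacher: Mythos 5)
The paper gives no proof of this theorem at all---it is stated as a quoted classical result with references to Gnedenko--Kolmogorov, Rva\v{c}eva, and Meerschaert--Scheffler. Your sketch is the standard truncation/compound-Poisson/Lindeberg argument that those references carry out, and you correctly identify the one genuinely delicate point (that condition~(\ref{cond:gned2}) controls only an iterated limit, necessitating a diagonal choice $\eps=\eps_N$ and a converging-together argument); since you too ultimately defer to the same references for the detailed estimates, your proposal is consistent with the paper's treatment.
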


\begin{proof}[Proof of Theorem~\ref{theo:fluct_small_sigma}]
For $k=1,\ldots,N$, define
$$
W_{k,N} = N^{-\frac 12-\sigma ^2} \ee^{\sqrt {n} (\sigma X_k+i \tau Y_k)}.
$$
Let $W_N$ be a random variable having the same law as the $W_{k,N}$'s.
 Note that $N\E[W_{N}] = N^{(1-\sigma ^2-\tau^2+2i\sigma \tau\rho)/2}$ by
Lemma~\ref{lem:moments_Wn}. To prove the theorem, we need to show that
$$
\sum_{k=1}^N (W_{k,N}-\E W_{k,N})
\todistr
N_{\C}(0,1).
$$
The proof is based on the two-dimensional Lindeberg central limit
theorem. We consider $W_{k,N}$ as an $\R^2$-valued random vector $(\Re W_{k,N},
\Im W_{k,N})$. Let $\Sigma_N$ be the covariance matrix of this vector. First,
we show that
\begin{equation}\label{eq:clt_cov_limit}
\lim_{N\to\infty} N\Sigma_N
=
\left(
\begin{matrix}
1/2 & 0\\
0   & 1/2
\end{matrix}
\right).
\end{equation}
We have
\begin{equation}\label{eq:proof_clt1}
N\E[(\Re W_N)^2+(\Im W_N)^2]=N\E [|W_N|^2]=N^{-2\sigma ^2} \E [\ee^{2\sigma \sqrt n  X}]=1.
\end{equation}
Also, we have $N \E [W_N^2]=N^{-2\tau^2 +4i \sigma \tau\rho}$ by
Lemma~\ref{lem:moments_Wn}. Since we assume that $\tau\neq 0$, this implies that
$\lim_{N\to\infty}N \E [W_N^2]=0$. By taking real and imaginary parts, we obtain
that
\begin{equation}\label{eq:proof_clt2}
\lim_{N\to\infty} N\E[(\Re W_N)^2-(\Im W_N)^2]=
\lim_{N\to\infty} N\E [(\Re W_N) (\Im W_N)]=0.
\end{equation}
Combining~\eqref{eq:proof_clt1} and~\eqref{eq:proof_clt2}, we get
\begin{equation}\label{eq:proof_clt4}
\lim_{N\to\infty} N\E [(\Re W_N)^2]=\lim_{N\to\infty} N\E [(\Im W_N)^2]=1/2.
\end{equation}
Also, by Lemma~\ref{lem:moments_Wn}, we have
\begin{equation}\label{eq:proof_clt3}
\lim_{N\to\infty} \sqrt{N} \E[W_N]
=
\lim_{N\to\infty} N^{-(\sigma ^2+\tau^2-2i\sigma \tau\rho)/2}=0.
\end{equation}
It follows from~\eqref{eq:proof_clt2}, \eqref{eq:proof_clt4}, \eqref{eq:proof_clt3} that~\eqref{eq:clt_cov_limit} holds.
Fix an arbitrary $\eps>0$. We complete the proof of the theorem by verifying the Lindeberg condition
\begin{equation}\label{eq:clt_lindeberg}
\lim_{N\to\infty} N \E [|W_N-\E W_N|^2 \ind_{|W_N-\E W_N|>\eps}]=0.
\end{equation}
Assume first that $\sigma\neq 0$, say $\sigma>0$. Write
$
a_N=\sigma + \frac 1 {2\sigma}+ \frac {\log \eps} {\sigma  n}
$.
Then, $\lim_{N\to\infty} a_N=\sigma + \frac 1 {2\sigma}>2\sigma$ by the assumption $\sigma^2<1/2$.
Hence, by Part~\ref{lem:saddle_weak2} of Lemma~\ref{lem:saddle_weak}, we have
\begin{equation}\label{eq:proof_lindeberg}
\lim_{N\to\infty} N \E [|W_N|^2 \ind_{|W_N|>\eps}]
=
\lim_{N\to\infty} \ee^{-2\sigma ^2 n}\E [\ee^{2\sigma \sqrt n  X}\ind_{X> \sqrt n a_N}]
=
0.
\end{equation}
This also trivially holds for $\sigma=0$.
Together with~\eqref{eq:proof_clt3}, \eqref{eq:proof_lindeberg} implies~\eqref{eq:clt_lindeberg}.
\end{proof}

\begin{proof}[Proof of Theorem~\ref{theo:fluct_crit_sigma}]
Without loss of generality, let $\sigma=1/\sqrt 2$. For $k=1,\ldots,N$, define
$$
W_{k,N}=N^{-1} \ee^{\sqrt n (\sigma X_k +i \tau Y_k)}.
$$
Let $W_N$ be a random variable with the same distribution as the $W_{k,N}$'s.
To prove the theorem, we need to verify that
$$
\sum_{k=1}^N (W_{k,N}-\E W_{k,N})
\todistr
N_{\C}(0,1/2).
$$
As we will see in equation~\eqref{eq:proof_crit_1} below, the Lindeberg
condition~\eqref{eq:clt_lindeberg} is not satisfied. We are going to apply
Theorem~\ref{theo:gnedenko} instead. Fix $\eps>0$ and let $a_N=\sqrt
{2}+\frac{\sqrt {2}\log \eps}{n}$. By Lemma~\ref{lem:saddle_point},
equation~\eqref{eq:lemma_saddle_crit_real} with $c=0$, we have
\begin{equation}\label{eq:proof_crit_1}
\lim_{N\to\infty} N\E[|W_{N}|^2 \ind_{|W_N|<\eps}]
=
\lim_{N\to\infty} N^{-1}\E[ \ee^{\sqrt{2n} X} \ind_{ X<\sqrt n a_N }]
=
1/2.
\end{equation}
If $|\rho|\neq 1$, then by Lemma~\ref{lem:moments_Wn} and~\eqref{eq:proof_crit_1},
\begin{equation}\label{eq:proof_crit_2}
\lim_{N\to\infty} N\E[ W_{N}^2 \ind_{|W_N|<\eps} ]
\leq
\lim_{N\to\infty} \ee^{-2n (1-\rho^2) \tau^2} N^{-1} \E [ \ee^{\sqrt {2n} X} \ind_{ X<\sqrt n a_N } ]
=0.
\end{equation}
The result of~\eqref{eq:proof_crit_2} continues to hold for $|\rho|=1$ since in 
this case,  Lemma~\ref{lem:moments_Wn} and Lemma~\ref{lem:saddle_point} (first
part of~\eqref{eq:lemma_saddle}) yield, as $N\to\infty$,
\begin{align*}
N\E[ W_{N}^2 \ind_{|W_N|<\eps} ]
=
N^{-1} \E [ \ee^{2\sqrt n (\sigma+i\tau\rho)X} \ind_{ X<\sqrt n a_N } ]
=
o(N^{-1}\ee^{n(\sqrt 2a_N-\frac12 a_N^2)})
\to 0.
\end{align*}
By Remark~\ref{rem:saddle} we have, as $N\to\infty$, 
$$
\E[\ee^{\sqrt n \sigma X}\ind_{X>\sqrt n a_N}]
\sim \frac{1}{\sqrt{\pi n}} \ee^{n \left(\frac{1}{\sqrt 2}a_N-\frac 12 a_N^2\right)}
\sim \frac{1}{\eps \sqrt{\pi n}}.
$$
It follows that
\begin{equation}\label{eq:proof_crit_3a}
\lim_{N\to\infty} N \E[|W_N|\ind_{|W_N|>\eps}]
=
\lim_{N\to\infty} \E[\ee^{\sqrt n \sigma X}\ind_{X>\sqrt n a_N}]
= 0.
\end{equation}
We consider $W_N$ as an $\R^2$-valued  random vector $(\Re W_N, \Im W_N)$. It follows from~\eqref{eq:proof_crit_1}, \eqref{eq:proof_crit_2}, \eqref{eq:proof_crit_3a} that the covariance matrix $\Sigma_N:=\Var[W_N \ind_{|W_N|<\eps}]$  satisfies
\begin{equation}\label{eq:clt_cov_limit_sigma_crit}
\lim_{N\to\infty} N\Sigma_N
=
\left(
\begin{matrix}
1/4 & 0\\
0   & 1/4
\end{matrix}
\right).
\end{equation}
It follows from~\eqref{eq:proof_crit_3a} that $\lim_{N\to\infty} N \P[|W_N|>\eps] = 0$. Therefore, the
conditions of Theorem~\ref{theo:gnedenko} are satisfied with $\nu=0$ and
$\Sigma$ given by the right-hand side of~\eqref{eq:clt_cov_limit_sigma_crit}.
Applying Theorem~\ref{theo:gnedenko}, we obtain the required statement.
\end{proof}

\begin{proof}[Proof of Theorem~\ref{theo:fluct_large_sigma}]
Recall that
$\alpha=\sqrt {2}/\sigma\in (0,2)$. For $k=1,\ldots,N$, define random variables
$$
W_{k,N}=\ee^{\sqrt {n} (\sigma X_k+i\tau Y_k-\sigma b_N)}.
$$
Let $W_N$ be a random variable having the same law as the $W_{k,N}$'s. We will
verify the conditions of Theorem~\ref{theo:gnedenko}.  To verify the first
condition, fix $0<r_1<r_2$, $0<\theta_1<\theta_2<2\pi$ and consider the set
$$
B=\{z\in \C \colon r_1<|z|<r_2, \theta_1<\arg z<\theta_2\}.
$$
We will show that
\begin{equation}\label{eq:proof_stable_cond1}
\lim_{N\to\infty} N\P[W_N\in B] =
\left(\frac 1 {r_1^{\alpha}}-\frac 1 {r_2^{\alpha}}\right)
\cdot \frac{\theta_2-\theta_1}{2\pi}.
\end{equation}
Define a set
$$
A_N=\bigcup_{j\in\Z} \left(\frac{2\pi j+\theta_1}{\tau \sqrt n}, \frac{2\pi j+\theta_2}{\tau \sqrt n}\right)\subset \R.
$$
We have
\begin{align*}
\P[W_N\in B]
&=
\P[\ee^{\sigma \sqrt {n}  (X -  b_N)} \in (r_1,r_2),  Y \in A_N]\\
&=
\int_{r_1}^{r_2} \P\left[Y\in A_N \mid \sigma \sqrt {n}  (X - b_N)=\log r\right]  f_N(r) \dd r.
\end{align*}
Here, $f_N(r)$ is the density of the log-normal random variable $\ee^{\sqrt {n} \sigma (X - b_N)}$:
\begin{equation}\label{eq:log_normal_dens_asympt}
f_N(r)
=
\frac{1}{\sqrt{2\pi n}\sigma  r} \exp\left\{-\frac{1}{2}\left(\frac{\log r}{\sigma  \sqrt {n}}+ b_N\right)^2\right\}
\sim
\frac 1N \alpha r^{-(1+\alpha)},
\;\;\;
N\to\infty,
\end{equation}
where the asymptotic equivalence holds uniformly in $r\in [r_1,r_2]$. To
prove~\eqref{eq:log_normal_dens_asympt}, recall that $\sqrt
{2\pi}b_N\ee^{b_N^2/2}\sim N$ and $b_N\sim \sqrt{2n}$. Conditionally on $\sigma \sqrt {n}  (X - b_N)=\log
r$, the random variable $Y$ is normal with mean $\mu_N=\rho (\frac{\log r}{\sigma
\sqrt{n}}+b_N)$ and variance $\sqrt{1-\rho^2}$. The variance is strictly
positive by the assumption $|\rho|\neq 1$. It follows easily that
$$
\lim_{N\to\infty} \P[Y \in A_N  \mid \sigma \sqrt {n}  (X - b_N)=\log r] = \frac{\theta_2-\theta_1}{2\pi}.
$$
Bringing everything together, we arrive at~\eqref{eq:proof_stable_cond1}. So,
the first condition of Theorem~\ref{theo:gnedenko} holds with
$$
\nu(\dd x \dd y)=\frac{\alpha}{2\pi}\cdot \frac{\dd x\dd y}{r^{2+\alpha}}, \;\;\; r=\sqrt{x^2+y^2}.  
$$
To verify the second condition of Theorem~\ref{theo:gnedenko} with $\Sigma=0$, it suffices to show that
\begin{equation}\label{eq:proof_stable_cond2}
\lim_{\eps\downarrow 0} \limsup_{N\to\infty} N\E [|W_N|^2 \ind_{|W_N|\leq \eps}]=0.
\end{equation}
Condition $|W_N|\leq \eps$ is equivalent to $X<a_N$, where $a_N=b_N+\frac{\log
\eps}{\sigma \sqrt n}\sim \sqrt {2n}$.  By Lemma~\ref{lem:saddle_point} (first case of~\eqref{eq:lemma_saddle}) with
$\lambda=\sqrt n$, $w=2\sigma$, we have
$$
\E [\ee^{2\sigma \sqrt {n} X}\ind_{X<a_N}]\sim C n^{-1/2} \ee^{2\sigma \sqrt n a_N- a_N^2/2}\sim CN^{-1}\ee^{2\sigma \sqrt n b_N}\eps^{2-\sqrt 2/\sigma}, \;\;\; N\to\infty,
$$
where we have again used that $\sqrt {2\pi}b_N \ee^{b_N^2/2}\sim N$.
We obtain that
$$
\lim_{N\to\infty} N\E [|W_N|^2 \ind_{|W_N|\leq \eps}]
=
\lim_{N\to\infty} N \ee^{-2\sigma \sqrt {n} b_N} \E [\ee^{2\sigma \sqrt {n} X}\ind_{X<a_N}]
=
C\eps^{2-\sqrt 2/\sigma}.
$$
Recalling that $2>\sqrt 2/\sigma$, we arrive at~\eqref{eq:proof_stable_cond2}.
By Theorem~\ref{theo:gnedenko},
$$
\sum_{k=1}^N (W_{k,N}-\E [W_{N}\ind_{|W_N|<1}])\todistr S_{\alpha},
$$
where the limiting
random vector $S_{\alpha}$ is infinitely divisible with a characteristic function  given by
$$
\psi(z):=\log \E [\ee^{i \langle S_\alpha, z \rangle}] = \frac{\alpha}{2\pi} \int_{\R^2} (\ee^{i \langle u, z \rangle}-1-i \langle u, z \rangle \ind_{|u|<1})
\frac{\dd x \dd y}{|u|^{2+\alpha}},
\;\;\;
z\in\C.
$$
Here, $u=x+iy$ and $\langle u, z \rangle= \Re (u \bar z)$. Clearly, $\psi(z)$
depends on $|z|$ only and satisfies $\psi(\lambda z)=\lambda^{\alpha}\psi(z)$
for every $\lambda>0$. It follows that $\psi(z)=\mathrm{const}\cdot |z|^{\alpha}$.
\end{proof}

\begin{proof}[Proof of Remark~\ref{rem:truncated_moment}]
We prove~\eqref{eq:trunc_expectation1} and~\eqref{eq:trunc_expectation2} first. Let $\sigma>1/\sqrt2$, $\tau\neq 0$, and $|\rho|<1$.  By Lemma~\ref{lem:moments_Wn}, we have
\begin{equation}\label{eq:comp_mN}
m_N
:=
N\E[\ee^{\sqrt {n}(\sigma X + i\tau Y)}\ind_{X<b_N}]
=
N^{1-\frac 12 \tau^2(1-\rho^2)} \E[\ee^{\sqrt {n}(\sigma+i\tau\rho)X}\ind_{X<b_N}].
\end{equation}
Write $w=\sigma+i\tau\rho$. Recall from~\eqref{eq:def_wN} that
\begin{equation}\label{eq:recall_bN}
\sqrt {2\pi}b_N \ee^{b_N^2/2}
\sim N,
\;\;\;
b_N\sim \sqrt{2n}, \;\;\; N\to\infty.
\end{equation}
Applying Lemma~\ref{lem:saddle_point}, we obtain
\begin{equation}\label{eq:comp_mN1}
\E[\ee^{\sqrt {n}(\sigma+i\tau\rho)X}\ind_{X<b_N}]
\sim
\begin{cases}
\frac{1}{(w/\sqrt 2)-1} N^{-1}\ee^{w\sqrt n b_N}, &\sigma+|\tau\rho|>\sqrt 2,\\
 \ee^{\frac 12 w^2n}, & \sigma+|\tau \rho| \leq \sqrt 2.
\end{cases}
\end{equation}
In the case $\sigma+|\tau \rho| = \sqrt 2$, we applied Remark~\ref{rem:saddle}
and noted that the first term in~\eqref{eq:lem_saddle_complex_two_terms}
dominates the second one.

\begin{proof}[Proof of~\eqref{eq:trunc_expectation1}]
Assume that $\sigma+|\tau|>\sqrt 2$.
If even the stronger condition $\sigma+|\tau\rho|>\sqrt 2$ is satisfied, then it follows from~\eqref{eq:comp_mN}  and the first line of~\eqref{eq:comp_mN1} that
\begin{equation}\label{eq:tech_fluct1}
|m_N|\sim C\ee^{\sigma \sqrt n b_N - \frac 12 \tau^2(1-\rho^2)n}=o(\ee^{\sigma \sqrt n b_N }).
\end{equation}
The last step follows from $\tau\neq 0$ and $|\rho|<1$.  If $\sigma+|\tau|>\sqrt 2$ but $\sigma+|\tau\rho|\leq \sqrt 2$, then it follows from~\eqref{eq:comp_mN} and the second line of~\eqref{eq:comp_mN1} that
\begin{equation}\label{eq:tech_fluct2}
|m_N|\sim N^{1+\frac 12 (\sigma^2-\tau^2)}=o(\ee^{\sigma\sqrt n b_N}).
\end{equation}
The last step follows from the inequality $1+\frac 12 (\sigma^2-\tau^2)<\sqrt 2 \sigma$. It follows from~\eqref{eq:tech_fluct1} and~\eqref{eq:tech_fluct2} that we can rewrite Theorem~\ref{theo:fluct_large_sigma} in the form~\eqref{eq:trunc_expectation1}.
\end{proof}

\begin{proof}[Proof of~\eqref{eq:trunc_expectation2}]
Assume that $\sigma+|\tau|\leq \sqrt 2$. Then, $\sigma-|\tau\rho|<\sqrt 2$ and it follows from~\eqref{eq:comp_mN} and Remark~\ref{rem:saddle} that
$$
m_N = N^{1-\frac 12 \tau^2(1-\rho^2)} (\ee^{\frac 12 w^2n}+ O(N^{-1}\ee^{w\sqrt n b_N}))
= N^{1+\frac 12 (\sigma^2-\tau^2)+ i\sigma \tau \rho} + o(\ee^{\sigma \sqrt n b_N}).
$$
The last step follows from $\tau\neq 0$ and $|\rho|<1$. Hence,  we can rewrite Theorem~\ref{theo:fluct_large_sigma} in the form~\eqref{eq:trunc_expectation2}.
\end{proof}

We now proceed to the proof of~\eqref{eq:trunc_expectation1a} and~\eqref{eq:trunc_expectation2a}. Let $\sigma>1/\sqrt 2$ and $\rho=1$. Our starting point is~\eqref{eq:theo:fluct_large_sigma_rho_1}.

\begin{proof}[Proof of~\eqref{eq:trunc_expectation1a}]
Assume that $\sigma+|\tau|>\sqrt2$ and $\sigma\neq \sqrt 2$. By Lemma~\ref{lem:saddle_point}, first line of~\eqref{eq:lemma_saddle}, we have
$$
m_N
:=
N\E[\ee^{\beta \sqrt {n} X}\ind_{X<b_N}]
\sim
\frac{N}{\sqrt {2\pi n} (\beta-\sqrt 2)}\ee^{\beta \sqrt n b_N- \frac 12 b_N^2}
\sim
\frac{\sqrt 2}{\beta-\sqrt 2}\ee^{\beta \sqrt n b_N},
,
$$
where we have used~\eqref{eq:recall_bN}.
Recall that
\begin{equation}\label{eq:zeta_zeta_tilde}
\tilde \zeta_P\left(\frac{\beta}{\sqrt 2}\right) + \frac{\sqrt 2}{\beta-\sqrt 2} = \zeta_P\left(\frac{\beta}{\sqrt 2}\right).
\end{equation}
It follows that we can rewrite~\eqref{eq:theo:fluct_large_sigma_rho_1} in the form~\eqref{eq:trunc_expectation1a}.
\end{proof}

\begin{proof}[Proof of~\eqref{eq:trunc_expectation2a}]
Assume that $\sigma+|\tau|\leq \sqrt2$.
By Remark~\ref{rem:saddle},
$$
m_N
:=
N\E[\ee^{\beta \sqrt {n} X}\ind_{X<b_N}]
=
N^{1+\frac 12\beta^2} + \frac{\sqrt 2+o(1)}{\beta-\sqrt 2}\ee^{\beta \sqrt n b_N}.
$$
It follows that we can rewrite~\eqref{eq:theo:fluct_large_sigma_rho_1} in the form~\eqref{eq:trunc_expectation2a}.
\end{proof}

\end{proof}

\subsection{Proof of Theorem~\ref{theo:log_scale}}
\label{subsec:proof_theo_log_scale} We will deduce the stochastic convergence of
the log-partition function $p_N(\beta)=\frac 1 n\log |\ZZZ_N(\beta)|$   from the
weak convergence of $\ZZZ_N(\beta)$. This will be done via
Lemma~\ref{lem:proof_log_scale} stated below. One may ask whether there exists a
more direct way to prove the convergence of $p_N(\beta)$. A standard method to
handle such questions for real $\beta$ is to use the Gaussian concentration inequality;
see~\cite[Theorem~1.3.4]{talagrand_book2}. To apply it we
need to verify that the function $p_N(\beta)$ is Lipschitz in the variables
$X_1,\ldots,X_N$. This is easy to do in the real setting, but if $\beta$ is
complex, we have $\ZZZ_N(\beta)=0$ for some non-empty set of tuples $X_1,\ldots,X_N$.
Thus, $p_N(\beta)$ is not even finite, so that the Lipschitz property does not hold.
The possibility of having infinite $p_N(\beta)$ is not just a technical
difficulty, especially in view of the fact  that the zeros of $\ZZZ_N$ become
dense in $B_3$ in the large $N$ limit.

\begin{lemma}\label{lem:proof_log_scale}
Let $Z,Z_1,Z_2,\ldots$ be  random variables with values in $\C$ and let $m_N\in\C$, $v_N\in \C\bsl \{0\}$ be sequences of normalizing  constants  such that
\begin{equation}\label{eq:proof_log_scale1}
\frac{Z_N-m_N}{v_N}\todistr Z.
\end{equation}
The following two statements hold:
\begin{enumerate}
\item If $|v_N|=o(|m_N|)$ and $|m_N|\to\infty$ as $N\to\infty$, then $\frac{\log |Z_N|}{\log |m_N|}\toprobab 1$.
\item If $|m_N|=O(|v_N|)$, $|v_N|\to \infty$ as $N\to\infty$ and $Z$ has no atoms, then $\frac{\log |Z_N|}{\log |v_N|}\toprobab 1$.
\end{enumerate}
\end{lemma}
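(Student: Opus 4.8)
The plan is to prove the two statements of Lemma~\ref{lem:proof_log_scale} separately, in each case combining the weak convergence~\eqref{eq:proof_log_scale1} with an elementary estimate on $\log|Z_N|$ sandwiched between $\log|m_N|$ and $\log|v_N|$. The key reduction is that $\frac{\log|Z_N|}{\log|a_N|}\toprobab 1$ for a sequence $a_N\to\infty$ is equivalent to: for every $\delta>0$, $\P[|Z_N|>|a_N|^{1+\delta}]\to 0$ and $\P[|Z_N|<|a_N|^{1-\delta}]\to 0$. Both bounds will follow from the fact that the rescaled variable $W_N:=(Z_N-m_N)/v_N$ converges weakly to $Z$, hence is tight, and (in case (2)) that $|W_N|$ does not concentrate near any particular value because $Z$ has no atoms.

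\textbf{Case (1): $|v_N|=o(|m_N|)$, $|m_N|\to\infty$.} Here I would write $Z_N=m_N+v_N W_N$ so that $|Z_N|=|m_N|\cdot|1+(v_N/m_N)W_N|$. Since $v_N/m_N\to 0$ and $(W_N)$ is tight, for any $\eps>0$ there is $R$ with $\P[|W_N|>R]<\eps$ for all large $N$, and on the complementary event $|(v_N/m_N)W_N|\le R|v_N|/|m_N|\to 0$; hence $|1+(v_N/m_N)W_N|\in(1/2,2)$ with probability $\ge 1-\eps$ for large $N$. Therefore $\log|Z_N|=\log|m_N|+O(1)$ with probability $\ge 1-\eps$, and since $\log|m_N|\to\infty$, dividing through gives $\frac{\log|Z_N|}{\log|m_N|}\to 1$ in probability.

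\textbf{Case (2): $|m_N|=O(|v_N|)$, $|v_N|\to\infty$, $Z$ atomless.} Now write $|Z_N|=|v_N|\cdot|W_N+m_N/v_N|$, where $m_N/v_N$ is a bounded sequence. For the upper bound: tightness of $(W_N)$ and boundedness of $m_N/v_N$ give, for any $\eps>0$, some $R$ with $\P[|W_N+m_N/v_N|>R]<\eps$ for large $N$; hence $|Z_N|\le R|v_N|$ with high probability, so $\log|Z_N|\le\log|v_N|+\log R$ and the ratio is eventually $\le 1+\delta$. For the lower bound one must rule out $|Z_N|$ being too small, i.e.\ show $\P[|W_N+m_N/v_N|<\eta]\to 0$ as first $N\to\infty$ then $\eta\downarrow 0$. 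This is where the atomlessness of $Z$ enters. By passing to a subsequence along which $m_N/v_N\to c$ for some $c\in\C$ (possible by boundedness), $W_N+m_N/v_N\todistr Z+c$, whose law is again atomless, so $\P[|Z+c|\le\eta]\to 0$ as $\eta\downarrow 0$; the portmanteau theorem applied to the closed set $\{|z+c|\le\eta\}$ (whose boundary has $(Z+c)$-measure zero for all but countably many $\eta$) then gives $\limsup_N\P[|W_N+m_N/v_N|\le\eta]\le\P[|Z+c|\le\eta]$. A standard subsequence argument upgrades this to $\limsup_N\sup$ over the original sequence. Consequently $|Z_N|\ge\eta|v_N|$ with probability $\to 1$ as $\eta\downarrow 0$, giving $\log|Z_N|\ge\log|v_N|-\log(1/\eta)$ and the ratio eventually $\ge 1-\delta$.

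\textbf{Main obstacle.} The only genuinely delicate point is the lower bound in case~(2), namely the uniform-in-$N$ anti-concentration of $W_N+m_N/v_N$ near $0$. The issue is that the shift $m_N/v_N$ need not converge, only be bounded, so one cannot directly invoke $W_N+m_N/v_N\todistr Z+c$; the subsequence-extraction argument (together with the fact that $Z+c$ is atomless for every fixed $c$) handles this, but it is the step that requires care and is the reason the atomlessness hypothesis on $Z$ is needed. Everything else is soft — tightness plus the continuous-mapping/portmanteau theorem — and I expect no computational difficulties.
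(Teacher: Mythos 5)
Your proposal is correct and follows essentially the same route as the paper's own proof. In case (1) both arguments rest on the observation that $(Z_N-m_N)/v_N$ is tight while $|m_N|/|v_N|\to\infty$, so $|Z_N|$ is pinned to within a bounded multiplicative factor of $|m_N|$ with high probability; your factorization $|Z_N|=|m_N|\,|1+(v_N/m_N)W_N|$ is just a rephrasing of the paper's inequality $\P[|m_N|^{1-\eps}<|Z_N|<|m_N|^{1+\eps}]\geq\P[|W_N|<\tfrac12|m_N|/|v_N|]$. In case (2) the upper bound is again tightness, and for the lower bound you and the paper both extract a subsequence along which $m_N/v_N$ converges, apply the portmanteau theorem to shift the limiting law, and invoke atomlessness of $Z$; the paper phrases it as a contradiction argument ("assume the probability stays above $\delta$ along infinitely many $N$"), while you phrase it as an iterated limit $\lim_{\eta\downarrow 0}\limsup_N=0$, but the subsequence-extraction step that you flag as the delicate point is exactly the paper's mechanism. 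One small remark: your parenthetical about the boundary $\{|z+c|=\eta\}$ having $(Z+c)$-measure zero for all but countably many $\eta$ is unnecessary — the portmanteau inequality $\limsup_N\P[W_N\in F]\leq\P[Z\in F]$ holds for every closed set $F$, no boundary condition required.
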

\begin{proof}[Proof of (1)]
Fix $\eps>0$. For sufficiently large $N$, we have $|m_N|>1$ and, hence,
\begin{align*}
\P\left[1-\eps<\frac{\log |Z_N|}{\log |m_N|}<1+\eps\right]
&=
\P[|m_N|^{1-\eps}<|Z_N|<|m_N|^{1+\eps}]\\
&\geq
\P\left[\left|\frac{Z_N-m_N}{v_N}\right|<\frac 12 \frac{|m_N|}{|v_N|}\right].
\end{align*}
The right-hand side converges to $1$ by our assumptions.
\end{proof}
\begin{proof}[Proof of (2)]
Fix $\eps>0$. For sufficiently large $N$,
$$
\P\left[\frac{\log |Z_N|}{\log |v_N|}>1+\eps\right]
=
\P\left[\frac{|Z_N|}{|v_N|}>|v_N|^{\eps}\right]
\leq
\P\left[\left|\frac {Z_N-m_N}{v_N}\right|>\frac 12 |v_N|^{\eps}\right].
$$
The right hand-side converges to $0$ by our assumptions. Consider now
$$
\P\left[\frac{\log |Z_N|}{\log |v_N|}<1-\eps\right]
=
\P\left[\frac{|Z_N|}{|v_N|}<|v_N|^{-\eps}\right]
=
\P\left[ \left|\frac {Z_N-m_N}{v_N}+\frac {m_N}{v_N}\right|<|v_N|^{-\eps}\right].
$$
Assume that there is $\delta>0$ such that the right-hand side is greater than
$\delta$ for infinitely many $N$'s. Recall that $m_N/v_N$ is bounded.  Taking a
subsequence, we may assume that $-m_N/v_N$ converges to some $a\in\C$. Recall
that  $|v_N|\to\infty$. But then, for every $\eta>0$,
$$
\P[|Z-a|<\eta]\geq \limsup_{N\to\infty}\P\left[\left|\frac {Z_{N}-m_{N}}{v_{N}}-a\right|<\frac{\eta}{2}\right]>\delta.
$$
This contradicts the assumption that $Z$ has no atoms.
\end{proof}

\begin{proof}[Proof of Theorem~\ref{theo:log_scale}: Convergence in probability]
Let $p(\beta)$ be defined by~\eqref{eq:limiting_log_partition_function_1}. Note
that $p$ is a continuous function. We are going to  prove that for every
$\beta\in\C$, $\lim_{N\to\infty} p_N(\beta)=p(\beta)$ in probability. We may
assume that $\tau\neq 0$ since otherwise the result is known from
equation~\eqref{eq:limiting_log_partition_function_real}; see~\cite{eisele,olivieri_picco}. It follows from Theorems~\ref{theo:fluct_small_sigma},
\ref{theo:fluct_crit_sigma}, \ref{theo:fluct_large_sigma},  and
Remark~\ref{rem:fluct_large_sigma_rho1} that
condition~\eqref{eq:proof_log_scale1} is satisfied with $Z_N=\ZZZ_N(\beta)$ and
an appropriate choice of normalizing sequences $m_N,v_N$. We will now verify
that  Lemma~\ref{lem:proof_log_scale} is applicable.

\noindent \textit{Case 1.} Let $\sigma^2\leq 1/2$. In this case, $m_N$ and $v_N$
are given by Theorems~\ref{theo:fluct_small_sigma}, \ref{theo:fluct_crit_sigma};
see also Remark~\ref{rem:clt_simplification_B3}. Namely, $ |m_N|=N^{1+\frac 12
(\sigma^2-\tau^2)} $ and $v_N=N^{\frac 12+\sigma^2}. $

\noindent \textit{Case 1a.} If in addition to $\sigma^2\leq 1/2$ we have
$\sigma^2+\tau^2<1$, then $1+\frac 12 (\sigma^2-\tau^2)>\frac 12+\sigma^2$ and
we obtain $|v_N|=o(|m_N|)$.

\noindent \textit{Case 1b.} If in addition to $\sigma^2\leq 1/2$ we have
$\sigma^2+\tau^2\geq 1$, then $1+\frac 12 (\sigma^2-\tau^2)\leq \frac
12+\sigma^2$ and we obtain $|m_N|=O(|v_N|)$.

\noindent \textit{Case 2.} Let $\sigma^2>1/2$ and, without restriction of
generality, $\sigma>1/\sqrt 2$. Then, $m_N$ and $v_N$ are given by
Remark~\ref{rem:truncated_moment}. Namely, $|v_N|=\ee^{\sigma\sqrt n b_N}$ and
the formula for $m_N$ depends on whether $\sigma+|\tau|>\sqrt 2$ or not.

\noindent \textit{Case 2a.} If $\sigma>1/\sqrt2$ and $\sigma+ |\tau|>\sqrt 2$,
then $m_N=0$, see~\eqref{eq:trunc_expectation1} and~\eqref{eq:trunc_expectation1a}. Thus, $|m_N|=o(|v_N|)$ is satisfied.

\noindent \textit{Case 2b.} If $\sigma>1/\sqrt2$ and $\sigma+ |\tau|< \sqrt 2$,
then $|m_N|=N^{1+\frac 12 (\sigma^2-\tau^2)}$; see~\eqref{eq:trunc_expectation2}
and~\eqref{eq:trunc_expectation2a}. From the inequality $1+\frac 12
(\sigma^2-\tau^2)>\sqrt 2 \sigma$ it follows that  $|v_N|=o(|m_N|)$.

\noindent \textit{Case 2c.} If $\sigma>1/\sqrt2$ and $\sigma+ |\tau| = \sqrt 2$,
then $1+\frac 12 (\sigma^2-\tau^2)=\sqrt 2 \sigma$. However, since $\sqrt n
b_N-\sqrt{2}n\to -\infty$ by~\eqref{eq:def_wN}, we still have $|v_N|=o(|m_N|)$.

To summarize, the normalizing constants $m_N$ and $v_N$ satisfy the first
condition of Lemma~\ref{lem:proof_log_scale} if $\beta\in B_1$ or $\beta$
belongs to one of four (open) line segments on the boundary of $B_1$ and $B_2$.
Otherwise, $m_N$ and $v_N$ satisfy the second condition of
Lemma~\ref{lem:proof_log_scale}. Note that we need also to verify that the random
variable $\zeta_P(\beta/\sqrt 2)$ has no atoms if $\sigma>1/\sqrt 2$. This will
be done in Lemma~\ref{lem:zeta_no_atoms}, below. Applying
Lemma~\ref{lem:proof_log_scale}, we obtain that $p_N(\beta)\to p(\beta)$ in
probability.
\end{proof}

\begin{lemma}\label{lem:zeta_no_atoms}
If $\sigma>1/2$, then the random variable $\zeta_P(\beta)$ has no atoms in $\C$.
\end{lemma}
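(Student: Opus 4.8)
The plan is to exploit the series representation \eqref{eq:zeta_poi} (or its regularized version \eqref{eq:zeta_poi_tilde}) together with the independence structure of the Poisson arrival times. Recall $P_k = \eps_1 + \ldots + \eps_k$ with i.i.d.\ standard exponential increments. The key idea is a conditioning argument: I would split off the contribution of the \emph{first} arrival time $P_1 = \eps_1$ from the rest. Observe that conditionally on $\eps_2, \eps_3, \ldots$, the random variable $\zeta_P(\beta)$ is a function of the single continuous random variable $\eps_1$, namely
$$
\zeta_P(\beta) = \eps_1^{-\beta} + \sum_{k=2}^{\infty} P_k^{-\beta} = \eps_1^{-\beta} + (\text{a quantity depending only on } \eps_1, \eps_2, \eps_3, \ldots).
$$
This is a bit delicate because the tail sum also depends on $\eps_1$ through the partial sums $P_k = \eps_1 + (\eps_2 + \ldots + \eps_k)$. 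To decouple cleanly, I would instead condition on the \emph{increments} $\eps_2, \eps_3, \ldots$ and write $P_k = \eps_1 + Q_k$ where $Q_k = \eps_2 + \ldots + \eps_k$ ($Q_1 = 0$) is measurable with respect to the conditioning $\sigma$-field. Then $\zeta_P(\beta) = \sum_{k\geq 1}(\eps_1 + Q_k)^{-\beta}$ is, conditionally, a deterministic analytic function $g(\eps_1)$ of the real variable $\eps_1 \in (0,\infty)$, where $g$ depends on the (conditioned) sequence $(Q_k)$.

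Next I would argue that for a.e.\ realization of $(\eps_2, \eps_3, \ldots)$, the conditional law of $\zeta_P(\beta) = g(\eps_1)$ is atomless. Since $\eps_1$ has an atomless (exponential) law, it suffices to show that $g$ is non-constant on every subinterval of $(0,\infty)$ — equivalently, that $g$ is not locally constant. This follows from analyticity: $g(t) = \sum_{k\geq 1}(t + Q_k)^{-\beta}$ extends to an analytic function of $t$ in a neighborhood of the positive real axis (the series converges locally uniformly since $\Re\beta = \sigma > 1$ on the domain of absolute convergence, and by Theorem~\ref{theo:zeta_poi_exists} the regularized version continues analytically to $\sigma > 1/2$), and $g$ is non-constant because, e.g., $g(t) \to 0$ as $t \to +\infty$ (for $\sigma > 1$) while $g(t) \to \infty$ as $t \downarrow 0$ — so $g$ cannot be constant, hence by the identity theorem its level sets are discrete, hence $\eps_1 \mapsto g(\eps_1)$ pushes the atomless exponential law forward to an atomless law. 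For the range $1/2 < \sigma \le 1$ one argues with the analytic function $\tilde\zeta_P$ instead, using that $t \mapsto \tilde g(t)$ (the conditional version of $\tilde\zeta_P$) is again analytic and non-constant — non-constancy here can be read off from Proposition~\ref{prop:zeta_boundary} or, more elementarily, from the fact that adding the term $\eps_1^{-\beta}$ already makes the map depend genuinely and non-trivially on $\eps_1$ near $\eps_1 = 0$, where $\eps_1^{-\beta}$ blows up while the remaining terms stay bounded.

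Finally I would integrate out: for any fixed $z \in \C$,
$$
\P[\zeta_P(\beta) = z] = \E\big[\,\P[\,g(\eps_1) = z \mid \eps_2, \eps_3, \ldots\,]\,\big] = \E[0] = 0,
$$
since the inner conditional probability vanishes for a.e.\ realization of the conditioning variables by the previous paragraph. As $z$ was arbitrary, $\zeta_P(\beta)$ has no atoms. The main obstacle I anticipate is the justification that the conditional map $t \mapsto g(t)$ is genuinely analytic and non-constant across the \emph{whole} strip $1/2 < \sigma \le 1$, where one only has the conditionally convergent representation \eqref{eq:zeta_poi_tilde}; this requires care in interchanging the $T \to \infty$ limit with the analytic dependence on $\eps_1$, but it should follow from the locally uniform convergence asserted in Theorem~\ref{theo:zeta_poi_exists} together with Hurwitz's theorem (a locally uniform limit of analytic functions that are eventually non-constant is non-constant, or identically constant only in degenerate cases excluded by the blow-up of $\eps_1^{-\beta}$).
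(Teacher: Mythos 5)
Your approach is genuinely different from the paper's and the overall strategy is sound in spirit: condition on $\eps_2,\eps_3,\ldots$, write $\zeta_P(\beta)=g(\eps_1)$, show $g$ is analytic and non-constant so its level sets are discrete, and push forward the atomless law of $\eps_1$. For $\sigma>1$ this is unproblematic. However, the fix you propose for $1/2<\sigma\le 1$ does not work as stated: Theorem~\ref{theo:zeta_poi_exists} gives locally uniform convergence of $\tilde\zeta_P(\beta;T)$ as a function of $\beta$, not of $\eps_1$, and in fact for fixed $T$ the truncated sum $\sum_k P_k^{-\beta}\ind_{P_k\le T}$ is not even continuous in $\eps_1$ (a term enters or leaves the sum each time $\eps_1+Q_k$ crosses $T$), so Hurwitz's theorem cannot be applied to these approximants. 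To close the gap one needs a separate argument that $g$ is a.s.\ analytic on a complex neighborhood of $(0,\infty)$. One way: by the law of the iterated logarithm $Q_k-k=O(\sqrt{k\log\log k})$ a.s., so $(t+Q_k)^{-\beta}-k^{-\beta}=O(k^{-\sigma-1/2+\delta})$ locally uniformly in $t$, hence $\sum_{k\ge 2}\bigl[(t+Q_k)^{-\beta}-k^{-\beta}\bigr]$ converges absolutely and defines an analytic function of $t$; adding back the deterministic $\sum_k k^{-\beta}$ (itself suitably regularized) and the term $t^{-\beta}$ recovers $g$. Only with such a lemma in hand does your argument go through on the full range $\sigma>1/2$.

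The paper's proof sidesteps analyticity in the hidden variable entirely. It uses the concentration function $Q(Y)=\sup_{y}\P[Y=y]$ and the inequality $Q(Y_1+Y_2)\le Q(Y_1)$ for independent $Y_1,Y_2$. Decompose $\zeta_P(\beta)=\tilde\zeta_P(\beta;T)+R(\beta;T)$, the two summands being independent because they involve Poisson points in $[0,T]$ and in $(T,\infty)$ respectively. Conditioning on the number $m$ of points in $[0,T]$: on $\{m=0\}$ (probability $\ee^{-T}$) the variable $\tilde\zeta_P(\beta;T)$ is constant; on $\{m\ge 1\}$ it is, up to a constant, $\sum_{k=1}^m U_k^{-\beta}$ with $U_k$ i.i.d.\ uniform on $[0,T]$, hence atomless since already $U_1^{-\beta}$ is. This gives $Q(\tilde\zeta_P(\beta;T))\le\ee^{-T}$, hence $Q(\zeta_P(\beta))\le\ee^{-T}$ for every $T$, i.e., $\zeta_P(\beta)$ is atomless. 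This route is shorter, pointwise in $\beta$, and uses nothing about analytic or LIL-type regularity; your route, once fully justified, gives a structurally appealing picture (atomlessness as a consequence of analytic non-degeneracy in a single hidden exponential variable) but requires more work precisely in the delicate strip $1/2<\sigma\le 1$.
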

\begin{proof}
For a random variable $Y$ with values in $\C$, let $Q(Y)=\sup_{y\in\C}
\P[Y=y]$ be the weight of the maximal atom of $Y$. Note that $Q$ is a special
case of the concentration function; see~\cite[p.~22]{petrov_book}. For
independent random variables $Y_1$ and $Y_2$, the convolution formula implies
that
\begin{equation}\label{eq:ineq_Q_concentration_func}
Q(Y_1+Y_2)\leq \max(Q(Y_1),Q(Y_2)).
\end{equation}
Also,  $Q(Y+c)=Q(Y)$ for every $c\in\C$. Let $P_1<P_2<\ldots$ be the points of a unit intensity Poisson point process on the positive half-line. For $T>1$, we write
\begin{equation}\label{eq:zeta_P_decomp}
\zeta_P(\beta)=\tilde \zeta_P(\beta; T)+ R(\beta; T),
\end{equation}
where $R(\beta; T)$ is a rest term and $\tilde \zeta_P(\beta;T)$ is defined as in~\eqref{eq:zeta_poi_tilde}, that is
\begin{equation}\label{eq:zeta_poi_tilde_repeat}
\tilde \zeta_P(\beta;T)=\sum_{k=1}^{\infty} \frac 1 {P_k^{\beta}}\ind_{P_k\in[0,T]}- \int_1^Tt^{-\beta}\dd t.
\end{equation}
Note that $\tilde \zeta_P(\beta;T)$ and $R(\beta; T)$ are independent random variables since $R(\beta;T)$ depends only on those points of the Poisson process which are in the interval $(T,\infty)$.

We will show that $Q(\tilde \zeta_P(\beta;T))\leq \ee^{-T}$ for every $T>1$.
By~\eqref{eq:ineq_Q_concentration_func} and~\eqref{eq:zeta_P_decomp} this
implies that $Q(\zeta_P(\beta))=0$, which is the desired result. Consider the
random events $A_m(T)=\{\sum_{k=1}^{\infty} \ind_{P_k\in[0,T]}=m\}$, $m\in\N_0$.
That is, $A_m(T)$ occurs if there are exactly $m$ points of the Poisson point
process in $[0,T]$.  Note that $\P[A_0(T)]=\ee^{-T}$ and $\tilde
\zeta_P(\beta;T)$ is  constant on the event $A_0(T)$. Let $z\in\C$. By the total
probability formula,
\begin{equation}\label{eq:zeta_tilde_eq_z_est}
\P[\tilde \zeta_P(\beta;T)=z]
\leq
\ee^{-T}+\sum_{m=1}^{\infty} \P[\tilde \zeta_P(\beta;T)=z | A_m].
\end{equation}
Conditionally on $A_m$, where $m\in\N$, the points $P_1,\ldots,P_m$ have the
same distribution as the increasingly reordered independent random variables
$U_1,\ldots,U_m$ distributed uniformly on $[0,T]$. It follows that, for every
$m\in\N$,
\begin{equation}\label{eq:zeta_tilde_eq_z_est2}
\P[\tilde \zeta_P(\beta;T)=z | A_m]
\leq
Q\left(\sum_{k=1}^m U_k^{-\beta}\right)
\leq
Q( U_1^{-\beta})
=0,
\end{equation}
where the last line follows from the fact that the random variable
$U_1^{-\beta}$ has no atoms. It follows from~\eqref{eq:zeta_tilde_eq_z_est}
and~\eqref{eq:zeta_tilde_eq_z_est2} that $\P[\tilde \zeta_P(\beta;T)=z]\leq
\ee^{-T}$ for every $z\in\C$, $T>1$. This implies that $Q(\tilde
\zeta_P(\beta;T))\leq \ee^{-T}$ and completes the proof.
\end{proof}

\begin{proof}[Proof of Theorem~\ref{theo:log_scale}: Convergence in $L^q$]
We are going to show that $p_N(\beta)\to p(\beta)$ in $L^q$, where $q\geq 1$ is fixed.
From the fact that $p_N(\beta)\to p(\beta)$ in probability, and since $p(\beta)>0$, for every
$\beta\in\C$, we conclude that, for every $C>p(\beta)$,
$$
\lim_{N\to\infty} p_N(\beta)\ind_{0 \leq p_N(\beta) \leq C+1}=p(\beta) \text { in } L^q.
$$
For every $u\in\R$, we have
$$
\P[p_N(\beta)\geq u]
\leq \ee^{-n u} \E |\ZZZ_N(\beta)|
\leq \ee^{-n u} N \E[\ee^{\sigma \sqrt n X}]
=\ee^{n(C-u)},
$$
where $C=1+\sigma^2/2$. From this, we conclude that
\begin{align*}
\E \left[|p_N(\beta)|^q \ind_{p_N(\beta)>C+1}\right]
&=
\sum_{k=1}^{\infty} \E[|p_N(\beta)|^q\ind_{C+k < p_N(\beta) \leq C+k+1}]\\
&\leq
\sum_{k=1}^{\infty} \ee^{-nk}(C+k+1)^q,
\end{align*}
which converges to $0$, as $N\to\infty$.
To complete the proof, we need to show that
\begin{equation}\label{eq:Lq_bound_small_pN}
\lim_{N\to\infty} \E \left[|p_N(\beta)|^q \ind_{p_N(\beta)<0}\right]=0.
\end{equation}
The problem is to bound the probability of small values of $\ZZZ_N(\beta)$,
where the logarithm has a singularity and $|p_N(\beta)|$ becomes large. This is
non-trivial because of the presence of complex amplitudes in the definition of
$\ZZZ_N(\beta)$; see~\eqref{eq:partition_function}. We have to show that there
is not much cancellation among the terms in~\eqref{eq:partition_function}.  Fix a
small $\eps>0$. Clearly,
\begin{equation}\label{eq:Lq_bound_small_pN_step1}
\E \left[|p_N(\beta)|^q \ind_{-\eps \sigma^2 \leq p_N(\beta)\leq 0}\right]\leq (\eps \sigma^2)^q.
\end{equation}
To prove~\eqref{eq:Lq_bound_small_pN}, we would like to estimate from above the
probability $\P[|\ZZZ_N(\beta)|\leq r]$ for $0<r<\ee^{-\eps \sigma^2 n}$. Recall
that $\ZZZ_N(\beta)$ is a sum of $N$ independent copies of the random variable
$\ee^{\sqrt n (\sigma X+ i\tau Y)}$. Unfortunately, the distribution of the
latter random variable does not possess nice regularity properties. For example,
in the most interesting case $\rho=1$ it has no density.  This is why we need a
smoothing argument. Denote by $B_r(t)$ the disc of radius $r$ centered at $t\in
\C$.  Fix a large $A>1$. We will show that uniformly in $t\in\C$, $1/A<|\beta|
<A$, $n> (2A)^2$, and $0<r<\ee^{-\eps \sigma^2 n}$,
\begin{equation}\label{eq:bound_density_small_r}
\P[\ee^{\sqrt n (\sigma X+ i\tau Y)} \in  B_r(t)] < C r^{\frac{\eps}{20}}.
\end{equation}
This inequality is stated in a form which will be needed later in the proof of Theorem~\ref{theo:zeros_in_B3}.

Let $|t| \geq \sqrt r$ and $\tau \geq 1/(2A)$. The argument $\arg t$ of a
complex number $t$ is considered to have values in the circle $\T=\R/2\pi \Z$. Let
$P:\R\to\T$ be the canonical projection. Denote by $I_r(t)$ the sector
$\{z\in \C \colon |\arg z-\arg t|\leq 2\sqrt r\}$, where we take the geodesic
distance between the arguments. A simple geometric argument shows that the disc $B_r(t)$ is
contained in the sector $I_r(t)$. The density of the random variable $P(\tau
\sqrt n Y)$ at $\theta \in \T$ is given by
$$
\P[P(\tau \sqrt n Y)\in \dd\theta] = \frac{1}{\sqrt {2\pi n} \tau}\sum_{k\in\Z} \ee^{-(\theta+2\pi k)^2/(2\tau^2 n)}\dd\theta.
$$
 By considering the right-hand side as a Riemann sum and recalling that $\tau\geq 1/(2A)$, we
see that the density converges to $1/(2\pi)$ uniformly in $\theta\in\T$ as
$N\to\infty$. We have
$$
\P[\ee^{\sqrt n (\sigma X+ i\tau Y)}\in B_r(t)]
\leq
\P[\ee^{\sqrt n (\sigma X+ i\tau Y)}\in I_r(t)]
<
C\sqrt r,
$$
which implies~\eqref{eq:bound_density_small_r}.

Let now $|t|<\sqrt r$. Then, recalling that $\log r<-\eps \sigma^2 n$, we obtain
$$
\P[ \ee^{\sqrt n (\sigma X+ i\tau Y)} \in B_r(t)]
\leq
\P[ \ee^{\sigma \sqrt n X} < r^{1/4}]
=
\P\left[X < \frac {\log r}{ 4 \sigma \sqrt n}\right]
<
\ee^{-\frac{(\log r)^2}{16\sigma^2 n}}
<
r^{\frac{\eps}{16}}.
$$

It remains to consider the case $t \geq \sqrt r$, $|\sigma| \geq 1/(2A)$.
The density of the random variable $\ee^{\sigma \sqrt n X}$ is given by
$$
g(x)=\frac{1}{\sqrt {2\pi n}\sigma x}\ee^{-\frac{(\log x)^2}{2\sigma^2 n}},
\qquad
x>0.
$$
It attains its maximum at $x_0=\ee^{-\sigma^2 n}$. The maximum is equal to
$g(x_0)=\frac {1}{\sqrt {2\pi n}\sigma} \ee^{\sigma^2 n/2}$. Let $r \leq (2\pi
n)\sigma^2 \ee^{-\sigma^2 n}$. Then,
$$
\P[\ee^{\sqrt n (\sigma X+ i\tau Y)} \in B_r(t)]
\leq
\P[t-r\leq \ee^{\sigma \sqrt n X}\leq t+r]\leq \frac {Cr}{\sqrt {n}\sigma} \ee^{\sigma^2 n/2} \leq  C r^{1/2}.
$$
Let $r \geq (2\pi n)\sigma^2 \ee^{-\sigma^2 n}$, which, together with
$|\sigma|>1/(2A)$ and $n>(2A)^2$, implies that $r>\ee^{-\sigma^2 n}$. 
Using the unimodality of the density $g$ and the inequality $t-r>r$, we get
$$
\P[\ee^{\sqrt n (\sigma X+ i\tau Y)} \in B_r(t)]
\leq \P[t-r\leq \ee^{\sigma \sqrt n X}\leq t+r]
< 2r g(r)
< \ee^{-\frac {(\log r)^2}{2\sigma^2 n}}
< r^{\frac{\eps}{2}}.
$$
The last inequality follows from $r<\ee^{-\sigma^2 n}$. This completes the proof of~\eqref{eq:bound_density_small_r}.

Now we are in position to complete the proof of~\eqref{eq:Lq_bound_small_pN}.
Let $U_r$ be a random variable distributed uniformly on the disc $B_r(0)$
and independent of all variables considered previously. It follows
from~\eqref{eq:bound_density_small_r} that the density of the random variable
$\ee^{\sqrt n (\sigma X+ i\tau Y)}+U_r$ is bounded from above by $Cr^{-2+(\eps/20)}$.
Hence, the density of $\ZZZ_N(\beta)+U_r$ is bounded by the same term
$Cr^{-2+(\eps/20)}$. With the notation $r=\ee^{-kn}$ it follows that, for every
$k\geq \eps \sigma^2$,
$$
\P[p_N(\beta) \leq -k]
=
\P[|\ZZZ_N(\beta)|\leq \ee^{-kn}]
\leq
\P[|\ZZZ_N(\beta)+U_{r}| \leq 2r ]
\leq
Cr^{\frac{\eps}{20}}.
$$
From this, we obtain that
$$
\E [|p_N(\beta)|^q \ind_{p_N(\beta)\in [-k-1,-k]}]
\leq
C (k+1)^{q} \ee^{-\frac{\eps kn}{20}}.
$$
Taking the sum over all $k=\eps \sigma^2+l$, $l=0,1,\ldots$, we get
$$
\E [|p_N(\beta)|^q \ind_{p_N(\beta)<-\eps \sigma^2}]
\leq
C \ee^{-\frac{\eps^2 \sigma^2 n}{20}} \sum_{l=1}^{\infty}  l^{q} \ee^{-\frac{\eps ln}{20}}
\leq
C \ee^{-\frac{\eps^2 \sigma^2 n}{20}}.
$$
Recalling~\eqref{eq:Lq_bound_small_pN_step1}, we arrive at~\eqref{eq:Lq_bound_small_pN}.
\end{proof}

\begin{remark}\label{rem:proof_zeros_B3_dominated_conv}
As a byproduct of the proof, we have the following statement. For every
$A>0$, there is a constant $C=C(A)$ such that $\E |p_N(\beta)| < C$, for all
$1/A<|\beta|<A$ and sufficiently large $N$.
\end{remark}

\section{Proofs of the results on zeros}\label{sec:proofs_zeros}
\subsection{Convergence of random analytic functions}

In this section, we collect some lemmas on weak convergence of stochastic
processes whose sample paths are analytic functions. As we will see, the
analyticity assumption simplifies the things considerably. For a metric space
$M$, denote by $C(M)$ the space of complex-valued continuous functions on $M$
endowed with the  topology of uniform convergence on compact sets. Let $D\subset
\C$ be a simply connected domain.

\begin{lemma}\label{lem:random_analytic_ineq}
Let $\{U(t)\colon t\in D\}$ be a random analytic function defined on $D$. Let
$\Gamma\subset D$  be a closed differentiable contour and let $K$ be a compact
subset located strictly inside $\Gamma$.  Then, for every $p\in\N_0$, there is a
constant $C=C_p(K,\Gamma)$ such that
$$
\E \left[\sup_{t\in K} |U^{(p)}(t)|\right] \leq C \oint_{\Gamma} \E |U(w)| |\dd w|.
$$
\end{lemma}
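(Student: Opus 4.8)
The plan is to use the Cauchy integral formula to express the derivatives $U^{(p)}$ on $K$ as contour integrals against $U$ along $\Gamma$, and then take expectations, moving the supremum inside by a uniform bound on the Cauchy kernel.

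First I would recall the Cauchy integral formula for derivatives: for any $t$ strictly inside $\Gamma$,
\begin{equation*}
U^{(p)}(t)=\frac{p!}{2\pi i}\oint_{\Gamma}\frac{U(w)}{(w-t)^{p+1}}\,\dd w.
\end{equation*}
Since $K$ is a compact subset lying strictly inside $\Gamma$, the distance $d:=\dist(K,\Gamma)$ is strictly positive, so $|w-t|\geq d$ for all $t\in K$, $w\in\Gamma$. Hence, for every $t\in K$,
\begin{equation*}
|U^{(p)}(t)|\leq \frac{p!}{2\pi d^{p+1}}\oint_{\Gamma}|U(w)|\,|\dd w|,
\end{equation*}
and the right-hand side no longer depends on $t$. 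Taking the supremum over $t\in K$ and then the expectation (using Tonelli/Fubini, which is justified because $|U|$ is nonnegative and measurable as a function of $(w,\omega)$, the path being continuous in $w$) gives
\begin{equation*}
\E\left[\sup_{t\in K}|U^{(p)}(t)|\right]\leq \frac{p!}{2\pi d^{p+1}}\oint_{\Gamma}\E|U(w)|\,|\dd w|,
\end{equation*}
so the claim holds with $C_p(K,\Gamma)=\frac{p!}{2\pi\,(\dist(K,\Gamma))^{p+1}}$.

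There is no serious obstacle here; the only points requiring a word of care are the joint measurability of $(w,\omega)\mapsto |U(w)|$ needed to apply Tonelli (which follows from continuity of sample paths and measurability of $U(w)$ for each fixed $w$), and the fact that the bound is vacuous unless $\oint_\Gamma \E|U(w)||\dd w|<\infty$, in which case the conclusion is trivially true. If one wants the estimate to be genuinely useful one should also note that the hypothesis that $U$ is a \emph{random} analytic function on $D$ guarantees that each $U^{(p)}$ is again a random analytic function, so the supremum over the compact set $K$ is a well-defined random variable. The mild subtlety worth flagging is the passage from "analytic on $D$" to applicability of Cauchy's formula: this requires $\Gamma$ together with its interior to be contained in $D$, which is part of the hypothesis (that $\Gamma\subset D$ is a closed contour with $K$ strictly inside), combined with $D$ being simply connected so that the interior of $\Gamma$ lies in $D$.
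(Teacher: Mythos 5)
Your proof is correct and follows essentially the same route as the paper's: both apply the Cauchy integral formula for derivatives, bound the kernel uniformly over $t\in K$ using $\dist(K,\Gamma)>0$, take the supremum, and then pass to expectations via Tonelli. You simply make the constant explicit and spell out the measurability and geometric hypotheses that the paper leaves implicit.
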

\begin{proof}
By the Cauchy formula, $|U^{(p)}(t)|\leq C \oint_{\Gamma} |U(w)| |\dd w|$, for all $t\in K$. Take the supremum over $t\in K$ and then the expectation.
\end{proof}
It is easy to
check that a sequence of stochastic processes with paths in $C(D)$ is tight (resp., weakly convergent) if
and only if it is tight (resp., weakly convergent) in  $C(K)$, for every compact set $K\subset D$.
\begin{lemma}\label{lem:random_analytic_weak_conv}
Let $U_1,U_2,\ldots$ be random analytic functions on $D$.
Assume that there is a continuous function $f\colon D\to \R$ such that  $\E |U_N(t)| < f(t)$, for all $t\in D$, and all $N\in\N$.
Then, the sequence $U_N$ is tight on $C(D)$.
\end{lemma}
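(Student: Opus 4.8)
The plan is to reduce tightness on $C(D)$ to tightness on $C(K)$ for each compact $K\subset D$, and then to verify the standard Arzelà--Ascoli-type tightness criterion for continuous functions using the analyticity of the $U_N$ together with the uniform moment bound. First I would fix a compact set $K\subset D$ and choose a closed differentiable contour $\Gamma\subset D$ such that $K$ lies strictly inside $\Gamma$ (possible since $D$ is a simply connected domain containing $K$); enlarging $K$ slightly if necessary, I may assume $K$ itself has nonempty interior and that a slightly larger compact set $K'$ still lies inside $\Gamma$. By Lemma~\ref{lem:random_analytic_ineq} applied with $p=0$ and with $p=1$, and using the hypothesis $\E|U_N(t)|<f(t)$ together with continuity of $f$ (hence boundedness of $f$ on the compact contour $\Gamma$), there is a constant $C=C(K,\Gamma)$, independent of $N$, such that
$$
\E\Bigl[\sup_{t\in K}|U_N(t)|\Bigr]\leq C\oint_{\Gamma}\E|U_N(w)|\,|\dd w|\leq C\oint_{\Gamma} f(w)\,|\dd w|=:M_K<\infty,
$$
and similarly $\E[\sup_{t\in K}|U_N'(t)|]\leq M_K'<\infty$ with $M_K'$ independent of $N$.

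Next I would convert these two uniform moment bounds into tightness in $C(K)$. By Markov's inequality, for every $\eta>0$ one can pick $R=R(\eta)$ with $\P[\sup_{t\in K}|U_N(t)|>R]<\eta$ for all $N$, which controls the pointwise size. For equicontinuity, note that for $s,t\in K$ lying in a common convex subset of $D$ (or connected by a short path inside the region bounded by $\Gamma$), $|U_N(s)-U_N(t)|\leq |s-t|\sup_{u}|U_N'(u)|$ where the supremum is over the relevant path; choosing the contour and $K$ so that the bound from Lemma~\ref{lem:random_analytic_ineq} applies, we get $\P[\sup_{|s-t|\leq\delta,\,s,t\in K}|U_N(s)-U_N(t)|>\eps]\leq \delta M_K'/\eps$, which can be made $<\eta$ uniformly in $N$ by taking $\delta$ small. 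Thus the sequence $\{U_N\}$ satisfies the Arzelà--Ascoli tightness criterion on $C(K)$: for every $\eta,\eps>0$ there are $R$ and $\delta$ such that the probability of the event $\{\sup_K|U_N|\leq R\}\cap\{\text{modulus of continuity on }K\text{ at scale }\delta\leq\eps\}$ is at least $1-2\eta$ for all $N$, and the corresponding set of functions is relatively compact in $C(K)$.

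Finally I would assemble the pieces. Exhaust $D$ by an increasing sequence of compact sets $K_1\subset K_2\subset\cdots$ with $\bigcup_j K_j=D$ and each $K_j$ contained in the interior of $K_{j+1}$. Given $\eta>0$, apply the previous step to each $K_j$ with tolerance $\eta 2^{-j}$ to produce, for each $j$, a relatively compact set $\mathcal{K}_j\subset C(K_j)$ with $\P[U_N|_{K_j}\notin\mathcal{K}_j]<\eta 2^{-j}$ for all $N$. The set $\mathcal{K}=\{g\in C(D):g|_{K_j}\in\mathcal{K}_j\text{ for all }j\}$ is relatively compact in $C(D)$ (uniform-on-compacts convergence is metrized by a countable family of seminorms, one per $K_j$, so relative compactness on each $K_j$ plus the diagonal argument gives relative compactness in $C(D)$), and $\P[U_N\notin\mathcal{K}]\leq\sum_j\eta 2^{-j}=\eta$ for all $N$. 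This is exactly tightness of $\{U_N\}$ on $C(D)$.

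The main obstacle, such as it is, is the bookkeeping in the equicontinuity estimate: one must make sure the contour $\Gamma$ and the compacts $K\subset K'$ are chosen so that the derivative bound from Lemma~\ref{lem:random_analytic_ineq} controls $|U_N(s)-U_N(t)|$ for all nearby $s,t$ in $K$ — this is where simple connectedness of $D$ and the freedom to slightly enlarge $K$ are used. Everything else is a routine application of Markov's inequality and the characterization of tightness in spaces of continuous functions via Arzelà--Ascoli.
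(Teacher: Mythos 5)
Your argument is correct and follows the paper's approach exactly: fix a compact $K\subset D$, enclose it by a contour $\Gamma\subset D$, apply Lemma~\ref{lem:random_analytic_ineq} with $p=0,1$ to get uniform-in-$N$ bounds on $\E[\sup_K|U_N|]$ and $\E[\sup_K|U_N'|]$, and then conclude tightness on $C(K)$. What you spell out in detail (Markov plus the mean-value/Arzel\`a--Ascoli estimate, then the diagonal gluing over an exhaustion of $D$ by compacts) is precisely what the paper compresses into ``by standard arguments.''
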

\begin{proof}
Let $K\subset D$ be a compact set. Let $\Gamma$ be a contour enclosing $K$ and located inside $D$. By Lemma~\ref{lem:random_analytic_ineq},
$$
\E \left[\sup_{t\in K} |U_N(t)|\right]
\leq
C \oint_{\Gamma} f(w) |\dd w|,
\;\;\;
\E \left[\sup_{t\in K} |U_N'(t)|\right]
\leq
C \oint_{\Gamma} f(w) |\dd w|.
$$
By standard arguments, this implies that the sequence $U_N$ is tight on $C(K)$.
\end{proof}

\begin{lemma}\label{lem:random_analytic_point_proc}
Let $U,U_1,U_2,\ldots$ be random analytic functions on $D$ such that $U_N$
converges as $N\to\infty$ to $U$ weakly on $C(D)$ and $\P[U\equiv 0]=0$.
 Then, for every continuous function $f:D\to\R$ with compact support, we have
$$
\sum_{z\in\C \colon U_N(z)=0} f(z) \todistr  \sum_{z\in\C \colon U(z)=0} f(z).
$$
\end{lemma}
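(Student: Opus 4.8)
The plan is to upgrade the weak convergence of the processes to an almost sure statement and then read off the convergence of the zero counts from the argument principle. Since $D$ is open in $\C$, hence $\sigma$-compact, the space $C(D)$ with the topology of uniform convergence on compact sets is Polish, so by the Skorokhod representation theorem we may assume, after passing to a new probability space (which leaves the laws of the real random variables $\sum_{z : U_N(z)=0} f(z)$ unchanged, hence is harmless for proving convergence in distribution), that $U_N \to U$ almost surely in $C(D)$. It then suffices to prove that under this coupling $\sum_{z : U_N(z) = 0} f(z) \to \sum_{z : U(z) = 0} f(z)$ almost surely, the sums being understood with multiplicities; recall that the hypothesis $\P[U \equiv 0] = 0$ guarantees that on an event of full probability the zero set of $U$ is discrete in $D$.

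Fix such a realization, together with the event of uniform convergence on compacts. Let $K = \supp f$ and let $\eta > 0$. By uniform continuity of $f$, pick $r > 0$ with $|f(z) - f(z')| < \eta$ whenever $|z - z'| \le r$; shrinking $r$, arrange that the finitely many distinct zeros $w_1, \dots, w_M$ of $U$ lying within distance $r$ of $K$ — these include every zero of $U$ in $K$ — have pairwise disjoint disks $\overline{B_r(w_j)}$ with a slightly larger disk still contained in $D$, on which $U$ vanishes only at $w_j$, with multiplicity $\nu_j$, and in particular does not vanish on the circle $\partial B_r(w_j)$. Set $\Omega_0 = \bigcup_{j=1}^M B_r(w_j)$.

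Two applications of uniform convergence now close the gap. First, $U$ does not vanish on the compact set $K \setminus \Omega_0$, so $\inf_{K \setminus \Omega_0}|U| > 0$, and hence $U_N$ has no zero in $K \setminus \Omega_0$ for all large $N$; thus for large $N$ every zero of $U_N$ lying in $K$ — the only ones that contribute to $\sum_{z : U_N(z)=0} f(z)$ — lies in $\Omega_0$. Second, for each $j$, using that $U_N \to U$ and (by Weierstrass) $U_N' \to U'$ uniformly on $\partial B_r(w_j)$ while $U$ is bounded away from $0$ there, the integer $\tfrac{1}{2\pi i}\oint_{\partial B_r(w_j)} U_N'/U_N$ converges to $\tfrac{1}{2\pi i}\oint_{\partial B_r(w_j)} U'/U = \nu_j$, hence equals $\nu_j$ for all large $N$; so $U_N$ has exactly $\nu_j$ zeros, counted with multiplicity, inside $B_r(w_j)$. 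Since $f$ varies by less than $\eta$ on each $B_r(w_j)$, it follows that for all large $N$
\[
\Bigl| \sum_{z : U_N(z) = 0} f(z) - \sum_{j=1}^{M} \nu_j f(w_j) \Bigr| \le \eta \sum_{j=1}^{M} \nu_j .
\]
Because $f$ vanishes off $K$, the second sum equals $\sum_{z : U(z)=0} f(z)$. Letting $\eta \downarrow 0$ gives the almost sure convergence, and with it the asserted weak convergence.

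The point needing care is the behaviour of the zeros of $U_N$ near the boundary of $\supp f$: a priori they might accumulate there, where $f$ need not be small, so spurious contributions are not excluded by the local Hurwitz-type count alone — it is the uniform lower bound for $|U|$ on the complementary compact set $K \setminus \Omega_0$, available precisely because $\P[U \equiv 0] = 0$ forces the zeros of $U$ to be discrete and hence finite in number in $K$, that rules this out. Everything else is the standard mechanism by which zeros depend continuously on an analytic function; no tightness input is required beyond the given weak convergence.
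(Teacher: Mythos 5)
Your proof is correct and is essentially the same approach as the paper's. The paper observes that the map $\varphi\mapsto\sum_{\varphi(z)=0}f(z)$ is continuous on the set of nonzero analytic functions in $C(D)$ (citing Rouch\'e) and applies the continuous mapping theorem; you package the identical continuity content via a Skorokhod representation and then carry out the Rouch\'e/argument-principle argument explicitly, which amounts to spelling out the CMT rather than invoking it.
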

\begin{remark}
Equivalently, the zero set of $U_N$, considered as a point process on $D$, converges weakly to the zero set of $U$.
\end{remark}
\begin{proof}
Let $H$ be the closed linear subspace of $C(D)$ consisting of all analytic
functions. Consider a functional $\Psi \colon H\to \R$ mapping an analytic function
$\varphi$ which is not identically $0$ to $\sum_{z} f(z)$, where the sum is over
all zeros of $\varphi$. Define  also $\Psi(0)=0$. It is an easy consequence of
Rouch\'e's theorem that $\Psi$ is continuous on $H\bsl\{0\}$. Note that $H\bsl\{0\}$ is a set of
full measure with respect to the law of $U$.  Recall that $U_N\to U$ weakly on
$H$. By the continuous mapping theorem~\cite[\S3.5]{resnick_book}, $\Psi(U_N)$
converges in distribution to $\Psi(U)$. This proves the lemma.
\end{proof}

\subsection{Proof of Theorem~\ref{theo:zeros_in_B3}}
A standard approximation argument shows that we can assume that $f$ is infinitely differentiable. Let $\lambda$ be the Lebesgue measure on $\C$.
In his computation of the limiting density of zeros, \citet{DerridaComplexREM1991} used the fact that $\frac 1 {2\pi}\Delta \log |\ZZZ_N|$ (where $\Delta$ is the Laplacian interpreted in the distributional sense) gives the measure counting the zeros of $\ZZZ_N$. That is,
\begin{equation}\label{eq:laplace_log_zeros}
\sum_{\beta\in\C \colon \ZZZ_N(\beta)=0} f(\beta)
=
\frac 1 {2\pi}
\int_{\C} \log |\ZZZ_N(\beta)| \Delta f(\beta) \lambda(\dd \beta).
\end{equation}
A proof of~\eqref{eq:laplace_log_zeros} can be found in~\cite[Section~2.4]{peres_etal_book}. Recall that $p(\beta)$ has been defined in Theorem~\ref{theo:log_scale}. We have
\begin{equation}\label{eq:distr_laplace_p}
\int_{\C} p(\beta) \Delta f(\beta) \lambda(\dd\beta) = \int_{\C} f(\beta) \Xi(\dd\beta).
\end{equation}
Indeed, Green's  identity gives
$$
\int_{B_i} p(\beta) \Delta f(\beta) \lambda(\dd \beta)
=
\int_{B_i} \Delta p(\beta) f(\beta) \lambda(\dd \beta)
+
\oint_{\partial B_i} \left(p(\beta)\frac{\partial f(\beta)}{\partial \nn} - f(\beta)\frac{\partial p(\beta)}{\partial \nn}\right) |\dd \beta|.
$$
Here, $\nn$ is the unit normal to the boundary of $B_i$ pointing outwards $B_i$
and $\frac {\partial}{\partial \nn}$ is the corresponding directional
derivative. The first term on the right-hand side is equal to $2\int_{\C}
f(\beta) \Xi_3(\dd \beta)$ for $i=3$ and to $0$ for $i=1,2$. Adding Green's
identities for $i=1,2,3$, noting that $\frac{\partial f}{\partial \nn}$ has no
jumps and computing the jumps of $\frac{\partial p}{\partial \nn}$ on the
boundaries between the different $B_i$'s, we arrive
at~\eqref{eq:distr_laplace_p}.

Recall that $p_N(\beta)=\frac 1n \log |\ZZZ_N(\beta)|$. From~\eqref{eq:laplace_log_zeros} and~\eqref{eq:distr_laplace_p} we conclude that Theorem~\ref{theo:zeros_in_B3} is equivalent to
$$
\int_{\C}  p_N(\beta) \Delta f(\beta) \lambda(\dd \beta) \toprobab \int_{\C} p(\beta)  \Delta f(\beta)\lambda(\dd \beta).
$$
We will show that this holds even in $L^1$.  By  Fubini's theorem, it suffices to show that
\begin{equation}\label{eq:conv_zeros_L1}
\lim_{N\to\infty}   \int_{\C}  \E |p_N(\beta)-p(\beta)| |\Delta f(\beta)| \lambda(\dd \beta) = 0.
\end{equation}
We know from Theorem~\ref{theo:log_scale} that $\lim_{N\to\infty} \E
|p_N(\beta)-p(\beta)|=0$, for every $\beta\in \C$. To complete the proof, we
need to interchange the limit and the integral. We may represent $f$ as a sum of
two functions, the first one vanishing on $|\beta|<1/4$ and the second one
vanishing outside $|\beta|<1/2$. Since the contribution of the second function
to~\eqref{eq:theo:zeros_in_B3} vanishes by Theorem~\ref{theo:no_zeros_B1}, we
may assume that $f$ vanishes on $|\beta|<1/4$. With this assumption, the use of
the dominated convergence  theorem is justified by
Remark~\ref{rem:proof_zeros_B3_dominated_conv}. \hfill $\Box$

\subsection{Proof of Theorem~\ref{theo:no_zeros_B1}} The idea of the proof is to
show that the fluctuations of $\ZZZ_N(\beta)$ around its expectation are of
smaller order than the expectation, in phase $B_1$. We don't rely on the
expression for the limiting log-partition function $p$.   Let $\Gamma$ be a
differentiable contour enclosing the set $K$ and located inside $B_1$. We have
\begin{align*}
\P[\ZZZ_N(\beta)=0, \text{ for some } \beta\in K]
&\leq
\P\left[\sup_{\beta\in K} \left|\frac{\ZZZ_N(\beta)-\E \ZZZ_N(\beta)}{\E \ZZZ_N(\beta)}\right|\geq 1\right]\\
&\leq
\E \sup_{\beta\in K} \left|\frac{\ZZZ_N(\beta)-\E \ZZZ_N(\beta)}{\E \ZZZ_N(\beta)}\right|\\
&\leq
C \oint_{\Gamma} \E \left|\frac{\ZZZ_N(\beta)-\E \ZZZ_N(\beta)}{\E \ZZZ_N(\beta)}\right| |\dd w|,
\end{align*}
where the last step is by Lemma~\ref{lem:random_analytic_ineq}. Note that $|\E
\ZZZ_N(\beta)|=N^{1+\frac 12 (\sigma^2-\tau^2)}$. To complete the proof, we need
to show that there exist  $\eps>0$ and $C>0$ depending on $\Gamma$ such that, for
every $\beta\in \Gamma$, $N\in\N$,
\begin{equation}\label{eq:B1_proof_need}
\E \left|\ZZZ_N(\beta)-\E \ZZZ_N(\beta)\right| < C N^{1-\eps+\frac 12 (\sigma^2-\tau^2)}.
\end{equation}
Since $\Gamma\subset B_1$, we can choose $\eps>0$ so small that $\Gamma\subset B_1'(\eps)\cup B_1''(\eps)$, where
\begin{align*}
B_1'(\eps)&=\{\beta\in\C \colon \sigma^2+\tau^2<1-2\eps\},\\
B_1''(\eps)&=\{\beta\in\C \colon (|\sigma|-\sqrt 2)^2-\tau^2>2\eps, 1/2<\sigma^2<2\}.
\end{align*}
We have
$$
\E|\ZZZ_N(\beta)-\E \ZZZ_N(\beta)|^2 = N \E |\ee^{\beta\sqrt n X}- \E \ee^{\beta\sqrt n X}|^2 \leq
N\E \ee^{2\sigma\sqrt n X}= N^{1+2\sigma^2}.
$$
If $\beta\in B_1'(\eps)$, then it follows that
$$
\E \left|\ZZZ_N(\beta)-\E \ZZZ_N(\beta)\right|
\leq
N^{\frac 12 +\sigma^2}
\leq
 N^{1-\eps+\frac 12 (\sigma^2-\tau^2)}.
$$
This implies~\eqref{eq:B1_proof_need}. Assume now that $\beta\in B_1''(\eps)$ and $\sigma>0$.
For $k=1,\ldots,N$, define random variables
$$
U_{k,N}=\ee^{\beta \sqrt n X_k-\sigma \sqrt {2}n} \ind_{X_k \leq \sqrt {2n}},
\;\;\;
V_{k,N}=\ee^{\beta \sqrt n X_k-\sigma \sqrt {2}n} \ind_{X_k > \sqrt {2n}},
$$
By Part~\ref{lem:saddle_weak1} of Lemma~\ref{lem:saddle_weak}, we have
\begin{equation}\label{eq:B1_proof_eq1}
\E \left| \sum_{k=1}^N (U_{k,N}-\E U_{k,N})\right|^2
\leq
N\E |U_{1,N}|^2
=
N\ee^{-2\sqrt 2 \sigma n} \E [\ee^{2\sigma \sqrt n X} \ind_{X<\sqrt {2n}}]
<1.
\end{equation}
Similarly, by Part~\ref{lem:saddle_weak2} of Lemma~\ref{lem:saddle_weak},
\begin{equation}\label{eq:B1_proof_eq2}
\E \left|\sum_{k=1}^N (V_{k,N} - \E V_{k,N})\right|
\leq
2N\E |V_{k,N}|
=
2N \ee^{-\sigma\sqrt 2 n} \E [\ee^{\sigma \sqrt n X} \ind_{X>\sqrt {2n}}]
<
2.
\end{equation}
Combining~\eqref{eq:B1_proof_eq1} and~\eqref{eq:B1_proof_eq2}, we obtain $\E
\left|\ZZZ_N(\beta)-\E \ZZZ_N(\beta)\right|\leq 3\ee^{\sigma \sqrt {2}n}$. Since
$\beta\in B_1''(\eps)$, this implies the required
estimate~\eqref{eq:B1_proof_need}. \hfill $\Box$

As a by-product, we obtain a  proof of the formula for the limiting
log-partition function in phase $B_1$ which is simpler than the proof given in
Section~\ref{subsec:proof_theo_log_scale}. We will use only the information
about the first two truncated moments of $\ZZZ_N(\beta)$.
\begin{proposition}
For $\beta\in B_1$, we have $\lim_{N\to\infty} p_N(\beta)=1+\frac 12 (\sigma^2-\tau^2)$ in probability.
\end{proposition}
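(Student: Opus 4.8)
The plan is to avoid the distributional limit theorems of Section~\ref{subsec:proof_theo_log_scale} and argue directly from the first two moments of $\ZZZ_N(\beta)$. The key point is that in phase $B_1$ the partition function concentrates around its mean: one has $|\E\ZZZ_N(\beta)|=N^{1+\frac12(\sigma^2-\tau^2)}$, while the deviation $\ZZZ_N(\beta)-\E\ZZZ_N(\beta)$ is of strictly smaller order. This deviation bound is exactly what was established inside the proof of Theorem~\ref{theo:no_zeros_B1}: after reducing to $\sigma\ge 0$ via the symmetry~\eqref{eq:symmetries} and splitting a fixed $\beta\in B_1$ according to whether $\sigma^2+\tau^2<1$ or else $(|\sigma|-\sqrt 2)^2>\tau^2$ together with $1/2<\sigma^2<2$ (one of these always holds for $\beta\in B_1$), the crude second-moment bound $\E|\ZZZ_N-\E\ZZZ_N|^2\le N^{1+2\sigma^2}$ in the first case, and the truncation estimates \eqref{eq:B1_proof_eq1}--\eqref{eq:B1_proof_eq2} in the second, yield a constant $\eps=\eps(\beta)>0$ with
$$
\E\bigl|\ZZZ_N(\beta)-\E\ZZZ_N(\beta)\bigr|\le C\,N^{1-\eps+\frac12(\sigma^2-\tau^2)}.
$$
Since the bound is needed only at the single point $\beta$, the Cauchy-formula step (Lemma~\ref{lem:random_analytic_ineq}) plays no role here.

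Dividing by $|\E\ZZZ_N(\beta)|$, I would conclude that $W_N:=\ZZZ_N(\beta)/\E\ZZZ_N(\beta)$ satisfies $\E|W_N-1|\le CN^{-\eps}\to 0$, hence $W_N\to 1$ in $L^1$, in particular in probability. Transferring this to the logarithmic scale, write, using $n=\log N$,
$$
p_N(\beta)=\frac1n\log|\ZZZ_N(\beta)|=\frac1n\log|\E\ZZZ_N(\beta)|+\frac1n\log|W_N|=1+\tfrac12(\sigma^2-\tau^2)+\frac1n\log|W_N|,
$$
so the statement reduces to showing that $\tfrac1n\log|W_N|\to 0$ in probability.

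The one delicate point is that $\log|W_N|=-\infty$ whenever $\ZZZ_N(\beta)=0$, so one cannot simply apply the continuous mapping theorem to the map $\log|\cdot|$. I would get around this by truncation: on the event $\{|W_N-1|<\tfrac12\}$ one has $\tfrac12<|W_N|<\tfrac32$ and hence $\tfrac1n\bigl|\log|W_N|\bigr|<\tfrac{\log 2}{n}\to 0$; since $\P[|W_N-1|\ge\tfrac12]\to 0$, it follows that $\P[\tfrac1n|\log|W_N||>\delta]\to 0$ for every $\delta>0$, which is precisely convergence in probability. I expect this truncation to be the only genuine subtlety; everything else is routine, and it is worth emphasising that the argument uses neither the fluctuation theorems of Section~\ref{subsec:fluct} nor the no-atoms Lemma~\ref{lem:zeta_no_atoms} --- only the two elementary moment estimates recalled above, which is precisely why it is shorter than the proof in Section~\ref{subsec:proof_theo_log_scale}.
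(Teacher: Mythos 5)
Your argument is essentially identical to the paper's proof following Theorem~\ref{theo:no_zeros_B1}: both rely on the moment estimate~\eqref{eq:B1_proof_need} (equivalently~\eqref{eq:tech24}), together with $|\E\ZZZ_N(\beta)|=N^{1+\frac12(\sigma^2-\tau^2)}$, and both handle the singularity of the logarithm by noting that for large $n$ the event $|\log|W_N||>n\delta$ forces $|W_N-1|>\tfrac12$, whose probability is then controlled by Markov's inequality. Your observations that the Cauchy-formula step is not needed for a single fixed $\beta$, and that the split of $B_1$ into $B_1'(\eps)\cup B_1''(\eps)$ indeed covers every point of $B_1$, are both correct and consistent with what the paper does.
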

\begin{proof}
We have shown in~\eqref{eq:B1_proof_need} that, for every $\beta\in B_1$, there
exist $C>0$ and $\eps>0$ depending on $\beta$ such that for all $N\in\N$,
\begin{equation}\label{eq:tech24}
\E \left|\frac {\ZZZ_N(\beta)}{\E \ZZZ_N(\beta)}-1\right|<C N^{-\eps}.
\end{equation}
Let $p(\beta)=1+\frac 12 (\sigma^2-\tau^2)$. Note that $\frac 1n \log |\E
\ZZZ_N(\beta)|=p(\beta)$ for all $N\in\N$.  It follows that for every $\delta>0$
and all sufficiently large $N$,
\begin{equation}\label{eq:tech24a}
\P[|p_N(\beta)-p(\beta)|>\delta]
=
\P\left[\left|\log \left|\frac{\ZZZ_N(\beta)}{\E \ZZZ_N(\beta)}\right|\right| > n \delta \right]
\leq
\P\left[\left|\frac{\ZZZ_N(\beta)}{\E \ZZZ_N(\beta)}-1\right| > \frac 12 \right].
\end{equation}
In the last step, we have used that $|\log |z||>n\delta$ implies that
$|z-1|>1/2$, for large $n$. By~\eqref{eq:tech24} and the Markov inequality we
can estimate the right-hand side of~\eqref{eq:tech24a} by $2CN^{-\eps}$, which
implies that $p_N(\beta)$ converges to $p(\beta)$ in probability.
\end{proof}

\subsection{Proof of Theorems~\ref{theo:local_zeros_small_beta}
and~\ref{theo:boundary_zeros1}} Recall that $\GGG$ is the Gaussian analytic
function defined in~\eqref{eq:def_GGG}. 
Theorem~\ref{theo:local_zeros_small_beta} will be deduced from the following
result.
\begin{theorem}\label{theo:weak_conv_small_beta}
Fix some $\beta_0=\sigma_0+i\tau_0$ with $\sigma_0^2<1/2$ and $\tau_0\neq 0$.
Define a random process $\{G_N(t)\colon t\in\C\}$ by
\begin{equation}\label{eq:def_GN}
G_N(t):=\frac{\ZZZ_N\left(\beta_0+\frac{t}{\sqrt n}\right)- N^{1+\frac 12 (\beta_0+\frac t {\sqrt n})^2}}{N^{\frac 12+ (\sigma_0+\frac t {\sqrt n})^2}}.
\end{equation}
Then,  the process $G_N$ converges weakly, as $N\to\infty$, to the process
$\ee^{-t^2/2}\GGG(t)$ on $C(\C)$.
\end{theorem}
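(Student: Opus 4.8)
The plan is to use the standard two-step route for weak convergence of random analytic functions: prove (i) tightness of $(G_N)$ in $C(\C)$ and (ii) convergence of all finite-dimensional distributions of $G_N$ to those of $\ee^{-t^2/2}\GGG$; since a centered Gaussian analytic function is determined by its covariance and pseudo-covariance kernels, step (ii) will simultaneously identify the limit. I would first record that $N^{1+\frac12(\beta_0+t/\sqrt n)^2}=\E\,\ZZZ_N(\beta_0+t/\sqrt n)$ (by $\E[\ee^{w\sqrt n X}]=N^{w^2/2}$, cf.\ Lemma~\ref{lem:moments_Wn}), so that, setting $V_{k,N}^{(t)}:=\ee^{(\beta_0+t/\sqrt n)\sqrt n X_k}$, one has $G_N(t)=\big(N^{1/2+(\sigma_0+t/\sqrt n)^2}\big)^{-1}\sum_{k=1}^N\big(V_{k,N}^{(t)}-\E V_{k,N}^{(t)}\big)$, a normalized sum of $N$ i.i.d.\ centered complex summands, and in particular $\E G_N(t)=0$.

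For tightness I would bound $\E|G_N(t)|\le(\E|G_N(t)|^2)^{1/2}$ and estimate, using $\E|G_N(t)|^2\le N\,\E|V_{1,N}^{(t)}|^2/|N^{1/2+(\sigma_0+t/\sqrt n)^2}|^2$ together with $\E|V_{1,N}^{(t)}|^2=N^{2(\sigma_0+\Re t/\sqrt n)^2}$ and $\Re(\sigma_0+t/\sqrt n)^2=(\sigma_0+\Re t/\sqrt n)^2-(\Im t)^2/n$, that $\E|G_N(t)|^2\le\ee^{2(\Im t)^2}$, a fixed continuous function of $t$ independent of $N$. Lemma~\ref{lem:random_analytic_weak_conv} then gives tightness on $C(\C)$. (As a sanity check, $\E|\ee^{-t^2/2}\GGG(t)|^2=\ee^{-\Re(t^2)}\ee^{|t|^2}=\ee^{2(\Im t)^2}$ as well.)

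For the finite-dimensional distributions I would fix $t_1,\dots,t_m\in\C$, view $(G_N(t_1),\dots,G_N(t_m))$ as an $\R^{2m}$-valued sum of $N$ i.i.d.\ centered vectors, and apply the multivariate Lindeberg central limit theorem (equivalently Theorem~\ref{theo:gnedenko} with $\nu=0$). Two inputs are needed. First, convergence of covariances: from $\E[V_{1,N}^{(s)}\overline{V_{1,N}^{(t)}}]=\ee^{\frac n2(2\sigma_0+(s+\bar t)/\sqrt n)^2}$ and $\E[V_{1,N}^{(s)}V_{1,N}^{(t)}]=\ee^{\frac n2(2\beta_0+(s+t)/\sqrt n)^2}$, subtracting the products of the first moments and dividing by the normalizations (using $N^{a/\sqrt n}=\ee^{a\sqrt n}$, $N^{b/n}=\ee^{b}$), one finds $\E[G_N(s)\overline{G_N(t)}]\to\ee^{-\frac12(s-\bar t)^2}$ and $\E[G_N(s)G_N(t)]\to0$; here the mean-correction term in the first limit is $O(N^{-\sigma_0^2-\tau_0^2})$ and the leading term of the pseudo-covariance is $O(N^{-2\tau_0^2})$, both negligible, the latter precisely because $\tau_0\neq0$. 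Second, the Lindeberg condition: because $\sigma_0^2<1/2$, the event that the norm of the $k$-th summand exceeds $\eps$ is contained in $\{X_k>\sqrt n\,a_N\}$ with $\liminf_N a_N>2\sigma_0$, so Part~\ref{lem:saddle_weak2} of Lemma~\ref{lem:saddle_weak} makes the truncated second moment vanish — exactly the mechanism used in the proof of Theorem~\ref{theo:fluct_small_sigma}. This yields convergence of $(G_N(t_1),\dots,G_N(t_m))$ to a centered complex Gaussian vector with covariances $\ee^{-\frac12(t_j-\bar t_l)^2}$ and vanishing pseudo-covariances.

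Finally I would identify the limit: $\ee^{-t^2/2}\GGG$ is a centered Gaussian analytic function with $\E[\ee^{-s^2/2}\GGG(s)\,\overline{\ee^{-t^2/2}\GGG(t)}]=\ee^{-s^2/2-\bar t^2/2}\ee^{s\bar t}=\ee^{-\frac12(s-\bar t)^2}$ and $\E[\ee^{-s^2/2}\GGG(s)\,\ee^{-t^2/2}\GGG(t)]=0$, so its finite-dimensional distributions coincide with the Gaussian limits found above; since $C(\C)$ is a Polish space, tightness together with convergence of finite-dimensional distributions gives that $G_N$ converges weakly to $\ee^{-t^2/2}\GGG$ on $C(\C)$. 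The step I expect to be the main obstacle (more bookkeeping than genuine difficulty) is the covariance computation: keeping track of the complex powers of $N$ in the centering and normalization, and checking that the outcome is exactly the kernel of $\ee^{-t^2/2}\GGG$ — it is there that the hypotheses $\sigma_0^2<1/2$ (through Lindeberg) and $\tau_0\neq0$ (to kill the pseudo-covariance and the mean-correction, so that the limit is a genuine \emph{complex} Gaussian rather than a degenerate one) enter essentially.
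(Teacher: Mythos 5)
Your proposal is correct and follows essentially the same route as the paper's proof: the same decomposition of $G_N$ into i.i.d.\ centered summands, the same second-moment tightness bound fed into Lemma~\ref{lem:random_analytic_weak_conv}, and the same multivariate Lindeberg CLT with the covariance kernel $\ee^{-(s-\bar t)^2/2}$ and vanishing pseudo-covariance (using $\tau_0\neq0$) and mean correction. The only difference is cosmetic ordering — you treat tightness before the finite-dimensional distributions, the paper does the reverse.
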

\begin{proof}
For $k=1,\ldots,N$, define a random process $\{W_{k,N}(t)\colon t\in\C\}$ by
$$
W_{k,N}(t) = N^{-1/2} \ee^{(\beta_0\sqrt n +t)X_k - (\sigma_0 \sqrt n + t)^2}.
$$
Then, $ G_N(t)=\sum_{k=1}^N (W_{k,N}(t)-\E W_{k,N}(t)) $. First, we show that
the convergence stated in Theorem~\ref{theo:weak_conv_small_beta} holds in the
sense of finite-dimensional distributions. Take $t_1,\ldots,t_d\in\C$. Write
$\WWW_{k,N}=(W_{k,N}(t_1), \ldots, W_{k,N}(t_d))$.  We need to prove that
\begin{equation}\label{eq:conv_vectors_G}
\sum_{k=1}^N (\WWW_{k,N}-\E \WWW_{k,N})
\todistr
(\ee^{-t_1^2/2}\GGG(t_1),\ldots, \ee^{-t_d^2/2}\GGG(t_d)).
\end{equation}
Let  $W_N$ be a process having the same law as the $W_{k,N}$'s and define
$\WWW_N=(W_N(t_1),\ldots,W_N(t_d))$.  A straightforward computation shows that
for all $t,s\in\C$,
\begin{align}
N\E[W_N(t)\overline{W_N(s)}]
&=
\ee^{-(t-\overline{s})^2/2},\label{eq:conv_WN_proc_1}
\\
\lim_{N\to\infty} N|\E[W_N(t)W_N(s)]|
&=
 0. \label{eq:conv_WN_proc_2}
\end{align}
Also, we have
\begin{equation}\label{eq:conv_WN_proc_3}
\lim_{N\to\infty} \sqrt N |\E [W_N(t)]|
=
\lim_{N\to\infty} \ee^{-\frac 12 (\sigma_0^2+\tau_0^2) n +O(\sqrt n)}
=
0.
\end{equation}
Note that by~\eqref{eq:def_GGG},
$$
\E [\ee^{-t^2/2} \GGG(t)\overline{\ee^{-s^2/2}\GGG(s)}]=\ee^{-(t-\overline{s})^2/2},
\qquad
\E [\ee^{-t^2/2} \GGG(t) \ee^{-s^2/2}\GGG(s)]=0.
$$
We see that the covariance matrix of the left-hand side
of~\eqref{eq:conv_vectors_G} converges to the covariance matrix of the
right-hand side of~\eqref{eq:conv_vectors_G} if we view both sides as
$2d$-dimensional real random vectors.  To complete the proof
of~\eqref{eq:conv_vectors_G},  we need to verify the Lindeberg condition: for
every $\eps>0$,
\begin{equation}\label{eq:lindeberg_multidim}
\lim_{N\to\infty} N \E [|\WWW_N|^2 \ind_{|\WWW_N|>\eps}]=0.
\end{equation}
 For $l=1,\ldots,d$, let $A_l$ be the random event $|W_{N}(t_l)|\geq
|W_{N}(t_j)|$ for all $j=1,\ldots,d$. On $A_l$, we have $|\WWW_N|^2\leq d
|W_N(t_l)|^2$. It follows that
$$
N \E [|\WWW_N|^2 \ind_{|\WWW_N|>\eps}]
\leq
 d  \sum_{l=1}^d N \E \left[|W_N(t_l)|^2 \ind_{|W_N(t_l)|>\frac{\eps}{\sqrt d}}\right]
\to 0,
$$
where the last step is by the same argument as in~\eqref{eq:proof_lindeberg}.
This completes the proof of the finite-dimensional convergence stated
in~\eqref{eq:conv_vectors_G}. The tightness follows from
Lemma~\ref{lem:random_analytic_weak_conv} which can be applied since
$$
\E |G_N(t)|\leq  \sqrt{\E[|G_N(t)|^2]} \leq \sqrt{N \E [|W_N(t)|^2]}= \ee^{(\Im t)^2}.
$$
The last equality follows from~\eqref{eq:conv_WN_proc_1}.
\end{proof}
\begin{proof}[Proof of Theorem~\ref{theo:local_zeros_small_beta}]
If   $\beta_0\in B_3$, then the expectation term in the definition of $G_N$, see~\eqref{eq:def_GN}, can
be ignored: we have $\lim_{N\to\infty} |G_N(t)-U_N(t)|=0$ uniformly on compact
sets, where
$$
U_N(t)=N^{-\frac 12-(\sigma_0+\frac t {\sqrt n})^2} \ZZZ_N\left(\beta_0+\frac{t}{\sqrt n}\right).
$$
It follows from Theorem~\ref{theo:weak_conv_small_beta} that $U_N$ converges to the process
$\ee^{-t^2/2} \GGG(t)$ weakly on $C(\C)$.
 Applying Lemma~\ref{lem:random_analytic_point_proc}, we obtain the statement of
 Theorem~\ref{theo:local_zeros_small_beta}.
\end{proof}

\begin{proof}[Proof of Theorem~\ref{theo:boundary_zeros1}]
Let $\delta_N$ be a bounded sequence such that $n\sigma_0\tau_0 - \delta_N\in
2\pi \Z$.  Taking $t=\beta_0 \frac {s+ i \delta_N} {\sqrt n}$ in
Theorem~\ref{theo:weak_conv_small_beta}, we obtain that weakly on $C(\C)$,
$$
G_N\left(\beta_0\frac {s+i \delta_N} {\sqrt n}\right) \toweak \GGG(0).
$$
Doing elementary transformations, we arrive at
$$
\frac{\ZZZ_N\left(\beta_0\left(1+\frac {s+ i \delta_N} n\right)\right)}{N^{\frac 12 + (\sigma_0+\beta_0\frac {s+ i \delta_N } n)^2}} \toweak  \ee^{-s}+\GGG(0).
$$
The zeros of the right-hand side are located at $s = 2 \pi i k +\xi$, $k\in\Z$,
where $\xi=-\log (-\GGG(0))$. The proof is completed by applying
Lemma~\ref{lem:random_analytic_point_proc}.
\end{proof}

\subsection{Proof of Theorems~\ref{theo:zeta_poi_exists}, \ref{theo:local_zeros_large_beta} and~\ref{theo:boundary_zeros2}}
\begin{proof}[Proof of Theorem~\ref{theo:zeta_poi_exists}]
Fix a compact set $K$ contained in the half-plane $\sigma>1/2$. Define  random
$C(K)$-valued elements $S_k(\beta)=s_1(\beta)+\ldots+s_k(\beta)$, where
$$
s_k(\beta)=\sum_{j=1}^{\infty} P_j^{-\beta} \ind_{k\leq P_j<k+1} -\int_{k}^{k+1} t^{-\beta} \dd t, \;\;\; \beta\in K.
$$
Note that $s_1,s_2,\ldots$ are independent.
By the properties of the Poisson process,
\begin{equation}\label{eq:zeta_campbell}
\E [s_k(\beta)]=0,\;\;\;
\sum_{k=1}^{\infty} \E [|s_k(\beta)|^2]=\int_{1}^{\infty} t^{-2\sigma} \dd t < \infty.
\end{equation}
Thus, as long as $\sigma>1/2$, the sequence $\{S_k(\beta) \}_{k\in\N}$ is an
$L^2$-bounded  martingale.  Hence, $S_k(\beta)$ converges a.s.\ to a limiting
random variable denoted by $S(\beta)$.  We need to show that the convergence is
uniform a.s. It follows from~\eqref{eq:zeta_campbell} and
Lemma~\ref{lem:random_analytic_weak_conv} that the sequence $S_k$, $k\in\N$, is tight
on $C(K)$. Hence, $S_k$ converges weakly on $C(K)$ to the process $S$.  By the
It\^{o}--Nisio theorem~\cite{ito_nisio}, this implies that $S_k$ converges to
$S$ a.s.\ as a random element of $C(K)$. This proves the theorem.
\end{proof}
\begin{proof}[Proof of Theorem~\ref{theo:local_zeros_large_beta}]
Let us first describe the idea. Consider the case $\sigma>1/\sqrt 2$. Arrange
the values $X_1,\ldots, X_{N}$ in an increasing order, obtaining the  order
statistics $X_{1:N}\leq\ldots \leq X_{N:N}$. It turns out that the main
contribution to the sum $\ZZZ_N(\beta)=\sum_{k=1}^{N} \ee^{\beta \sqrt n X_k}$
comes from the upper order statistics $X_{N-k:N}$, where $k=0,1,\ldots$. Their
joint limiting distribution is well-known in the extreme-value theory,
see~\cite[Corollary~4.19(i)]{resnick_book}, and will be recalled now.  Denote by
$\MMM$ the space of locally finite counting measures on $\bar
\R=\R\cup\{+\infty\}$. We endow $\MMM$ with the (Polish) topology of vague
convergence. A point process on $\bar\R$ is a random element with values in
$\MMM$. Let $P_1,P_2,\ldots$  be the arrivals of the unit intensity Poisson
process on the positive half-line. Define the sequence $b_N$ as
in~\eqref{eq:def_wN}, that is
$$
b_N=\sqrt {2n}- \frac {\log (4\pi n)}{2\sqrt {2 n}}.
$$
\begin{proposition}\label{prop:resnick}
The point process $\pi_N:=\sum_{k=1}^N \delta(\sqrt n(X_k-b_N))$ converges as
$N\to\infty$ to the point process $\pi_{\infty}=\sum_{k=1}^{\infty}
\delta(-(\log P_k)/\sqrt 2)$ weakly on $\MMM$.
\end{proposition}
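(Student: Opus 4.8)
The plan is to recognize Proposition~\ref{prop:resnick} as a standard instance of Poisson convergence for point processes generated by a row-wise i.i.d.\ triangular array, reducing the whole statement to a single scalar Gaussian tail asymptotic. Write $\xi_N:=\sqrt n(X_1-b_N)$ for the generic term of the array. By the general theory of such arrays --- either by computing Laplace functionals directly, $\E[\ee^{-\pi_N(g)}]=(1-\tfrac1N\cdot N\,\E[1-\ee^{-g(\xi_N)}])^N$ for $g\ge0$ continuous with compact support, or by invoking standard Poisson-convergence results for null arrays (see, e.g., \cite[Proposition~3.21, Corollary~4.19(i)]{resnick_book}) --- the process $\pi_N$ converges weakly on $\MMM$ to the Poisson point process with mean measure $\mu$ as soon as $N\,\P[\xi_N\in\,\cdot\,]$ converges vaguely on $\bar\R$ to $\mu$. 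Since $\bar\R=\R\cup\{+\infty\}$ is compactified only at the upper end, its relatively compact sets are exactly those bounded below: the bulk of the sample, where $\sqrt n(X_k-b_N)\sim-\sqrt2\,n\to-\infty$, leaves every relatively compact set and is legitimately forgotten, and the vague limit is pinned down by the half-lines $(x,+\infty]$, $x\in\R$. The candidate $\mu$ will have a density, so no boundary subtlety arises. To identify $\mu$, observe that the arrival times $P_k$ form a unit-intensity Poisson process on $(0,\infty)$, whose image under $t\mapsto-(\log t)/\sqrt2$ is the Poisson process on $\R$ with intensity $\sqrt2\,\ee^{-\sqrt2 x}\,\dd x$; hence $\pi_\infty=\ppp(\sqrt2\,\ee^{-\sqrt2 x}\dd x)$ and $\mu((x,+\infty])=\ee^{-\sqrt2 x}$. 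It therefore remains to prove $N\,\P[X_1>b_N+x/\sqrt n]\to\ee^{-\sqrt2 x}$ for every real $x$.

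For this tail estimate, set $z_N:=b_N+x/\sqrt n\to+\infty$ and use $1-\Phi(z)\sim(\sqrt{2\pi}\,z)^{-1}\ee^{-z^2/2}$ as $z\to+\infty$ (the reflection $\Phi(z)=1-\Phi(-z)$ applied to \eqref{eq:Phi_asympt}), so that $N\,\P[X_1>z_N]\sim N(\sqrt{2\pi}\,z_N)^{-1}\ee^{-z_N^2/2}$. Expanding $z_N^2=b_N^2+2b_Nx/\sqrt n+x^2/n$ and inserting the explicit $b_N$ from \eqref{eq:def_wN}, which gives $b_N^2=2n-\log(4\pi n)+o(1)$ and $b_N/\sqrt n\to\sqrt2$, one gets $z_N^2/2=n-\tfrac12\log(4\pi n)+\sqrt2\,x+o(1)$ and $\sqrt{2\pi}\,z_N\sim\sqrt{4\pi n}$; since $\ee^{-n}=N^{-1}$ this yields $N\,\P[X_1>z_N]\sim N\cdot(4\pi n)^{-1/2}\cdot N^{-1}\sqrt{4\pi n}\,\ee^{-\sqrt2 x}=\ee^{-\sqrt2 x}$, as required. (Equivalently, one may feed the relation $\sqrt{2\pi}\,b_N\,\ee^{b_N^2/2}\sim N$ from \eqref{eq:def_wN} straight into the estimate; and conceptually this is just the classical Gaussian extreme-value limit \cite[Corollary~4.19(i)]{resnick_book}, rescaled by the factor $\sqrt n/b_N\to1/\sqrt2$, which sends $\ppp(\ee^{-x}\dd x)$ to $\ppp(\sqrt2\,\ee^{-\sqrt2 x}\dd x)$.)

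I do not expect a genuine obstacle: this is a textbook extreme-value statement. The two points that deserve a little care are the bookkeeping around the one-sided compactification $\bar\R$ --- verifying that the bulk of the sample really escapes every relatively compact set, so that $\pi_\infty$ records only the top order statistics --- and the appearance of the exponent $\sqrt2$ rather than $1$ in the limiting intensity, which is forced by the discrepancy between the imposed normalization $\sqrt n$ and the natural Gaussian extremal scale $b_N\sim\sqrt{2n}$.
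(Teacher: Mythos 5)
Your proof is correct and takes essentially the same route as the paper, which simply cites \cite[Corollary~4.19(i)]{resnick_book} for this fact; you supply the standard reduction to the Poisson-convergence criterion for i.i.d.\ rows together with the Gaussian tail computation $N\,\P[X_1>b_N+x/\sqrt n]\to\ee^{-\sqrt2\,x}$, and your identification of the limiting intensity $\sqrt2\,\ee^{-\sqrt2 x}\,\dd x$ via the change of variables $t\mapsto-(\log t)/\sqrt2$ matches the stated $\pi_\infty$.
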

Utilizing this result, we will show that it is possible to approximate
$\ZZZ_N(\beta)$ (after appropriate normalization) by $\tilde \zeta_P(\beta/\sqrt
2)$ in the half-plane $\sigma>1/\sqrt 2$. Consider now the case $\sigma<-1/\sqrt
2$. This time, the main contribution to the sum $\ZZZ_N(\beta)$ comes from the
lower order statistics $X_{k:N}$, $k=1,2,\ldots$ Their joint limiting
distribution is the same as for the upper order statistics, only the sign should
be reversed.  Moreover, it is known that the upper and the lower order
statistics become asymptotically independent as $N\to\infty$; see~\cite{ikeda}
or~\cite[Cor. 5.28]{resnick_book}. Thus, in the half-plane $\sigma<-1/\sqrt 2$
it is possible to approximate $\ZZZ_N(\beta)$ by an independent copy of
$\zeta_P(-\beta/\sqrt 2)$. In the rest of the proof we make this idea rigorous.
For simplicity of notation we restrict ourselves to the half-plane
$D=\{\beta\in\C \colon \sigma>1/\sqrt 2\}$.

\begin{theorem}\label{theo:weak_conv_large_beta}
The following convergence holds weakly on $C(D)$:
$$
\xi_N(\beta):=\frac{\ZZZ_N(\beta)-N\E [\ee^{\beta \sqrt n X}1_{X<b_N}]}{\ee^{\beta \sqrt n b_N}}
\toweak
\tilde \zeta_{P}\left(\frac{\beta}{\sqrt 2}\right).
$$
\end{theorem}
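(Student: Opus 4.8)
The plan is to isolate the contribution of the upper order statistics of $X_1,\dots,X_N$, which after the rescaling $u_k:=\sqrt n(X_k-b_N)$ are governed by the point process $\pi_N$ of Proposition~\ref{prop:resnick}, and to show that the remaining (bulk) part of the sum, once centered, is negligible uniformly in $N$. Writing $\ee^{\beta\sqrt n X_k}/\ee^{\beta\sqrt n b_N}=\ee^{\beta u_k}$ and fixing a cutoff $M>0$, I would split
\[
\xi_N(\beta)=A_N^{(M)}(\beta)+R_N^{(M)}(\beta),
\]
where $A_N^{(M)}(\beta)=\sum_{k\,:\,u_k>-M}\ee^{\beta u_k}-N\E[\ee^{\beta\sqrt n(X-b_N)}\ind_{-M<\sqrt n(X-b_N)<0}]$ carries the top order statistics together with the matching piece of the centering, and $R_N^{(M)}(\beta)=\sum_{k\,:\,u_k\le -M}\ee^{\beta u_k}-N\E[\ee^{\beta\sqrt n(X-b_N)}\ind_{\sqrt n(X-b_N)\le -M}]$ is the centered remainder. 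The final statement follows from three facts: (i) $A_N^{(M)}\toweak\tilde\zeta_P(\,\cdot/\sqrt2\,;\ee^{\sqrt2 M})$ on $C(D)$ as $N\to\infty$, in the notation of~\eqref{eq:zeta_poi_tilde}; (ii) $\tilde\zeta_P(\,\cdot/\sqrt2\,;T)\to\tilde\zeta_P(\,\cdot/\sqrt2\,)$ on $C(D)$ as $T\to\infty$, which is Theorem~\ref{theo:zeta_poi_exists}; (iii) $\lim_{M\to\infty}\limsup_{N\to\infty}\P[\sup_{t\in K}|R_N^{(M)}(t)|>\delta]=0$ for every compact $K\subset D$ and every $\delta>0$. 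Granting (i)--(iii), the standard approximation criterion for weak convergence (let $N\to\infty$ first, then $M\to\infty$) gives $\xi_N\toweak\tilde\zeta_P(\,\cdot/\sqrt2\,)$ on $C(D)$.

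For (i), the key computation is that $N$ times the density of $u_1=\sqrt n(X-b_N)$ at the point $u$ converges, using $\sqrt{2\pi}b_N\ee^{b_N^2/2}\sim N$ and $b_N/\sqrt n\to\sqrt2$ (see~\eqref{eq:def_wN}), to $\sqrt2\,\ee^{-\sqrt2 u}$, which is precisely the intensity of $\pi_\infty=\sum_k\delta(-(\log P_k)/\sqrt2)$. Hence, by dominated convergence, the deterministic centering in $A_N^{(M)}$ converges, uniformly for $\beta$ in compact subsets of $D$, to $\sqrt2\int_{-M}^0\ee^{(\beta-\sqrt2)u}\,\dd u=\int_1^{\ee^{\sqrt2 M}}t^{-\beta/\sqrt2}\,\dd t$ (substitute $t=\ee^{-\sqrt2 u}$). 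For the random part, the functional that sends a locally finite configuration having finitely many points in $(-M,\infty)$, none at $-M$, to the entire function $\beta\mapsto\sum_{k\,:\,v_k>-M}\ee^{\beta v_k}$ is continuous into $C(D)$; combined with Proposition~\ref{prop:resnick} and the continuous mapping theorem (and the fact that $\pi_\infty$ a.s.\ charges the fixed level $-M$ with probability zero) this gives $\sum_{k\,:\,u_k>-M}\ee^{\beta u_k}\toweak\sum_{k\,:\,P_k<\ee^{\sqrt2 M}}P_k^{-\beta/\sqrt2}$ on $C(D)$. Since the deterministic centering converges, Slutsky's lemma assembles the two pieces into (i).

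For (iii), $R_N^{(M)}(\beta)$ is a centered sum of $N$ i.i.d.\ summands, so
\[
\E|R_N^{(M)}(\beta)|^2\le N\,\E[\ee^{2\sigma\sqrt n(X-b_N)}\ind_{X<b_N-M/\sqrt n}]=N\ee^{-2\sigma\sqrt n b_N}\,\E[\ee^{2\sigma\sqrt n X}\ind_{X<b_N-M/\sqrt n}].
\]
Since $2\sigma>\sqrt2$ for $\beta\in D$, Lemma~\ref{lem:saddle_point} (first case of~\eqref{eq:lemma_saddle}) applies, and after inserting $\sqrt{2\pi}b_N\ee^{b_N^2/2}\sim N$ one obtains $\E|R_N^{(M)}(\beta)|^2\le C\,\ee^{-(2\sigma-\sqrt2)M}$ for $N$ large, uniformly for $\beta$ on any contour $\Gamma\subset D$ on which $\sigma$ stays bounded away from $1/\sqrt2$. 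By Lemma~\ref{lem:random_analytic_ineq}, $\E[\sup_{t\in K}|R_N^{(M)}(t)|]\le C\oint_\Gamma\E|R_N^{(M)}(w)|\,|\dd w|\le C\,\ee^{-(\sigma_\Gamma-1/\sqrt2)M}$ with $\sigma_\Gamma:=\inf_{w\in\Gamma}\Re w>1/\sqrt2$, and Markov's inequality yields (iii). Taking $M=0$ in the same variance bound, Lemma~\ref{lem:random_analytic_weak_conv} shows moreover that $(\xi_N)$ is tight on $C(D)$, as needed.

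The step I expect to be the main obstacle is making the continuous mapping argument in (i) fully rigorous: the functional $\beta\mapsto\sum_k\ee^{\beta v_k}$ is unbounded when a point escapes to $+\infty$, so one must first introduce an auxiliary upper cutoff $M'$, use tightness of the maximum $\sqrt n(X_{N:N}-b_N)$ — which converges weakly to $-(\log P_1)/\sqrt2<\infty$ — to discard points above $M'$ with probability tending to $1$, apply the continuous mapping theorem to the now bounded functional on $(-M,M')$, and only then let $M'\to\infty$. One also has to keep all these convergences uniform over compact sets of $\beta$ rather than pointwise, which is exactly what makes the $C(D)$-valued (as opposed to merely finite-dimensional) approximation criterion available; the remaining ingredients — the Laplace-type expansion of the truncated mean, the second-moment bound on the remainder, and the passage $T\to\infty$ in Theorem~\ref{theo:zeta_poi_exists} — are routine.
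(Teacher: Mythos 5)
Your proposal reproduces the paper's proof essentially verbatim: the decomposition $\xi_N=A_N^{(M)}+R_N^{(M)}$ (with $A_N^{(M)}=\xi_N^A-e_N^A$ and $R_N^{(M)}=\Delta_N^A$ in the paper's notation), the continuous-mapping argument via Proposition~\ref{prop:resnick} plus the deterministic computation of the truncated mean for part (i), the second-moment bound combined with Lemma~\ref{lem:random_analytic_ineq} for part (iii), and the three-step passage $N\to\infty$, then $A\to\infty$ via Theorem~\ref{theo:zeta_poi_exists}, all match Lemmas~\ref{lem:proof_conv_to_zeta_1} and~\ref{lem:proof_conv_to_zeta_2}. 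The only nuance: the "auxiliary upper cutoff $M'$" you worry about is unnecessary in the paper's framework, because $\MMM$ is taken to be counting measures on the \emph{compactified} line $\bar\R=\R\cup\{+\infty\}$, so vague convergence on $\MMM$ already controls possible escape to $+\infty$ and one simply restricts the continuous-mapping theorem to configurations not charging $\{-A,+\infty\}$, a set of full $\pi_\infty$-measure.
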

The proof consists of two lemmas. Take $A>0$ and write $\xi_N(\beta)=\xi_N^A(\beta)-e_N^A(\beta)+\Delta_N^A(\beta)$, where
\begin{align*}
\xi_N^A(\beta)&=\sum_{k=1}^N \ee^{\beta \sqrt n (X_k-b_N)}\ind_{b_N-\frac A{\sqrt n}<X_k},\\
e_N^A(\beta)&=N\E\left[\ee^{\beta \sqrt n (X_k-b_N)}\ind_{b_N-\frac A{\sqrt n}\leq X_k<b_N}\right],\\
\Delta_N^A(\beta)&=\sum_{k=1}^N \left( \ee^{\beta \sqrt n (X_k-b_N)}\ind_{X_k \leq  b_N - \frac A{\sqrt n}} - \E\left[\ee^{\beta \sqrt n (X_k-b_N)}\ind_{X_k\leq b_N-\frac A{\sqrt n}}\right]\right).
\end{align*}
\begin{lemma}\label{lem:proof_conv_to_zeta_1}
Let $\tilde \zeta_P(\cdot;\cdot)$ be defined as in~\eqref{eq:zeta_poi_tilde}.
Then, the following convergence holds weakly on $C(D)$:
$$
\xi_N^A(\beta)-e_N^A(\beta)\toweak  \tilde \zeta_{P}\left(\frac{\beta}{\sqrt 2}; \ee^{\sqrt 2A}\right).
$$
\end{lemma}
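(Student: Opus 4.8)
The plan is to recognise $\xi_N^A$ as a single continuous functional of the point process of rescaled energies $\pi_N=\sum_{k=1}^N\delta(\sqrt n(X_k-b_N))$, to pass to the limit through Proposition~\ref{prop:resnick} and the continuous mapping theorem, and to dispose of the deterministic centering $e_N^A$ by a direct Laplace-type computation. Throughout, recall that $D=\{\beta\in\C\colon\sigma>1/\sqrt2\}$ and that $C(D)$ carries the topology of uniform convergence on compact subsets of $D$.

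First I would rewrite $\xi_N^A$ in terms of $\pi_N$. Substituting $x=\sqrt n(X_k-b_N)$ gives
$$
\xi_N^A(\beta)=\sum_{k\,:\,\sqrt n(X_k-b_N)>-A}\ee^{\beta\sqrt n(X_k-b_N)}=\int_{(-A,\infty]}\ee^{\beta x}\,\pi_N(\dd x)=:\Phi_A(\pi_N)(\beta),
$$
where $\Phi_A(\mu)(\beta)=\int_{(-A,\infty]}\ee^{\beta x}\,\mu(\dd x)$ is defined for every locally finite counting measure $\mu$ on $\bar\R$ with $\mu(\{+\infty\})=0$. Since $[-A,\infty]$ is compact in $\bar\R$, such a $\mu$ assigns it finite mass, so $\Phi_A(\mu)$ is a finite sum of entire functions of $\beta$, hence an element of $C(D)$. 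The key point is that $\Phi_A$ is continuous at every $\mu$ with $\mu(\{-A\})=\mu(\{+\infty\})=0$: if $\mu_n\to\mu$ vaguely then, by the standard point-process convergence lemmas (see~\cite[\S3.5]{resnick_book}), the finitely many points of $\mu_n$ in $(-A,\infty]$ converge --- in number and in position --- to those of $\mu$; the absence of an atom at $-A$ prevents a point from escaping through the boundary, and the absence of an atom at $+\infty$ ensures all limiting points are finite, so that $\ee^{\beta x}$ is well defined along them. Consequently the associated finite exponential sums converge uniformly for $\beta$ in compact subsets of $D$, i.e.\ in $C(D)$. Since $\pi_\infty=\sum_{k\geq1}\delta(-(\log P_k)/\sqrt2)$ and $\pi_N$ both a.s.\ carry no atom at the fixed point $-A$ and none at $+\infty$, the continuous mapping theorem together with Proposition~\ref{prop:resnick} yields
$$
\xi_N^A(\beta)=\Phi_A(\pi_N)(\beta)\toweak\Phi_A(\pi_\infty)(\beta)=\sum_{k\geq1}P_k^{-\beta/\sqrt2}\,\ind_{P_k\in[0,\ee^{\sqrt2 A}]}
$$
weakly on $C(D)$, where I used that $x_k:=-(\log P_k)/\sqrt2>-A$ iff $P_k<\ee^{\sqrt2 A}$ and $\ee^{\beta x_k}=P_k^{-\beta/\sqrt2}$.

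Next I would identify the limit of $e_N^A$. Writing $\nu_N(\dd x)=N\,\P[\sqrt n(X-b_N)\in\dd x]$, one has $e_N^A(\beta)=\int_{-A}^0\ee^{\beta x}\,\nu_N(\dd x)$, and $\nu_N$ has Lebesgue density
$$
\frac{N}{\sqrt{2\pi n}}\,\ee^{-b_N^2/2}\,\ee^{-b_N x/\sqrt n}\,\ee^{-x^2/(2n)};
$$
using $\sqrt{2\pi}\,b_N\ee^{b_N^2/2}\sim N$ and $b_N\sim\sqrt{2n}$ from~\eqref{eq:def_wN}, this density converges to $\sqrt2\,\ee^{-\sqrt2 x}$ uniformly on $[-A,0]$, whence $e_N^A(\beta)\to\int_{-A}^0\sqrt2\,\ee^{(\beta-\sqrt2)x}\,\dd x$ uniformly on compact subsets of $D$. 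The substitution $t=\ee^{-\sqrt2 x}$ turns the last integral into $\int_1^{\ee^{\sqrt2 A}}t^{-\beta/\sqrt2}\,\dd t$. Adding this deterministically convergent sequence to the weakly convergent $\xi_N^A$ (a Slutsky argument in $C(D)$) and recalling the definition~\eqref{eq:zeta_poi_tilde} of $\tilde\zeta_P(\cdot\,;\cdot)$ gives
$$
\xi_N^A(\beta)-e_N^A(\beta)\toweak\sum_{k\geq1}P_k^{-\beta/\sqrt2}\,\ind_{P_k\in[0,\ee^{\sqrt2 A}]}-\int_1^{\ee^{\sqrt2 A}}t^{-\beta/\sqrt2}\,\dd t=\tilde\zeta_P\!\left(\frac{\beta}{\sqrt2};\ee^{\sqrt2 A}\right)
$$
weakly on $C(D)$, which is the assertion.

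The step I expect to be the main obstacle is the continuity of $\Phi_A$ on $\MMM$ equipped with the vague topology: one must check carefully that vague convergence of the counting measures forces the associated exponential sums --- which have a random number of summands --- to converge uniformly on compacts of $D$, with due attention to the boundary point $-A$ and the compactification point $+\infty$. This is routine given the point-process machinery of~\cite{resnick_book}, but it is precisely the place where the ``extremal'' nature of $\xi_N^A$ (finiteness of the sum over $(-A,\infty]$) is used; the complementary non-extremal part of $\ZZZ_N$, which this argument deliberately discards, is what the companion lemma on $\Delta_N^A$ will have to control separately.
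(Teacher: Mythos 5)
Your proposal is correct and follows essentially the same route as the paper: rewriting $\xi_N^A$ as a functional of the rescaled extremal point process $\pi_N$, invoking Proposition~\ref{prop:resnick} together with the continuous mapping theorem (with the same continuity set, namely measures not charging $-A$ or $+\infty$), and computing the deterministic limit of $e_N^A$ directly. The only cosmetic differences are that the paper names the functional $\Psi$ and writes the $e_N^A$ computation via the substitution $y=\sqrt n(x-b_N)$ rather than through the density of $\nu_N$ and $t=\ee^{-\sqrt2 x}$.
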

\begin{proof}
Recall that by Proposition~\ref{prop:resnick} the point process $\pi_N$
converges to the point process $\pi_{\infty}$ weakly on $\MMM$. Consider a
functional $\Psi\colon \MMM \to C(D)$ which maps a locally finite counting measure
$\rho=\sum_{i\in I}\delta(y_i)\in \MMM$ to the function
$\Psi(\rho)(\beta)=\sum_{i\in I} \ee^{\beta y_i}\ind_{y_i>-A}$, where $\beta\in
D$. Here, $I$ is at most countable index set. If $\rho$ charges the point
$+\infty$, define $\Psi(\rho)$, say, as $0$.  The functional $\Psi$ is
continuous on the set of all  $\rho\in \MMM$ not charging the points $-A$ and
$+\infty$, which is a set of full measure with respect to the law of
$\pi_{\infty}$. It follows from the continuous mapping
theorem~\cite[\S3.5]{resnick_book} that $\xi_N^A=\Psi(\pi_N)$ converges weakly
on $C(D)$ to $\Psi(\pi_{\infty})$. Note that
$$
\Psi(\pi_{\infty})(\beta)=\sum_{k=1}^{\infty}P_k^{-\beta/\sqrt 2}\ind_{P_k<\ee^{\sqrt 2 A}}.
$$
We prove the convergence of $e_N^A(\beta)$. Using the change of variables $\sqrt
n(x-b_N)=y$, we obtain
\begin{align*}
e_N^A(\beta)
=
\frac{N}{\sqrt {2\pi}} \int_{b_N-\frac{A}{\sqrt n}}^{b_N} \ee^{\beta \sqrt n (x-b_N)} \ee^{-\frac{x^2}{2}}\dd x
=
\frac{N}{\sqrt {2\pi n}} \int_{-A}^{0} \ee^{\beta y} \ee^{-\frac 12 (b_N+\frac{y}{\sqrt n})^2}\dd y.
\end{align*}
Recalling that $\sqrt{2\pi}b_N \ee^{b_N^2/2}\sim N$ and $b_N\sim \sqrt {2n}$ as $N\to\infty$, we obtain that $\lim_{N\to\infty}e_N^A(\beta)=\int_{1}^{\ee^{\sqrt 2 A}} t^{-\beta/\sqrt 2} \dd t$, as required.
\end{proof}
\begin{lemma}\label{lem:proof_conv_to_zeta_2}
For every compact set $K\subset D$, there is $C>0$ such that, for all sufficiently large $N$,
$$
\E\left[\sup_{\beta\in K} |\Delta_N^A(\beta)|\right]
\leq C \ee^{(1-\sqrt 2 \sigma) A/2}.
$$
\end{lemma}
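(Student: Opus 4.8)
The plan is to pass from the supremum over $K$ to a contour integral and then to estimate $\E|\Delta_N^A(\beta)|$ pointwise by a second-moment bound. Note first that $\Delta_N^A$ is a random entire function of $\beta$: each summand $\ee^{\beta\sqrt n(X_k-b_N)}$ is entire, and by Lemma~\ref{lem:trunc_moment_Phi} the subtracted expectation equals $\ee^{-\beta\sqrt n b_N}\ee^{\beta^2 n/2}\Phi(b_N-\tfrac A{\sqrt n}-\beta\sqrt n)$, which is entire because $\Phi$ continues analytically to all of $\C$. Put $\sigma_K=\min_{\beta\in K}\Re\beta$, so $\sigma_K>1/\sqrt2$, and fix a differentiable contour $\Gamma\subset D$ enclosing $K$ so tightly that $\Re w\ge\sigma_K-\delta$ on $\Gamma$, where $\delta>0$ is chosen small enough that both $\sigma_K-\delta>1/\sqrt2$ and $\sqrt2-2(\sigma_K-\delta)\le1-\sqrt2\,\sigma_K$; this is possible precisely because $\sigma_K>1/\sqrt2$. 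By Lemma~\ref{lem:random_analytic_ineq},
\[
\E\!\left[\sup_{\beta\in K}|\Delta_N^A(\beta)|\right]\le C_{K,\Gamma}\oint_\Gamma\E|\Delta_N^A(w)|\,|\dd w|\le C\sup_{w\in\Gamma}\E|\Delta_N^A(w)|,
\]
so it suffices to bound $\E|\Delta_N^A(w)|$ uniformly for $w\in\Gamma$.

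Fix $w\in\Gamma$ and write $s=\Re w$, so $s\ge\sigma_K-\delta$. By Cauchy--Schwarz, $\E|\Delta_N^A(w)|\le(\E|\Delta_N^A(w)|^2)^{1/2}$; since $\Delta_N^A(w)$ is a sum of $N$ independent centred complex random variables, its second moment is $N$ times the variance of a single summand, hence at most
\[
N\,\E\!\left[\ee^{2s\sqrt n(X-b_N)}\ind_{X\le b_N-\frac A{\sqrt n}}\right]=N\ee^{-2s\sqrt n b_N}\,\E\!\left[\ee^{2s\sqrt n X}\ind_{X<b_N-\frac A{\sqrt n}}\right].
\]
I would evaluate the truncated moment with the first case of Lemma~\ref{lem:saddle_point} (equation~\eqref{eq:lemma_saddle}), applied with exponent coefficient $2s$ and truncation parameter $a(n)=b_N/\sqrt n-A/n\to\sqrt2$; this is the relevant case because $2s>\sqrt2$ throughout $D$. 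Feeding in the defining relations $\sqrt{2\pi}b_N\ee^{b_N^2/2}\sim N$ and $b_N\sim\sqrt{2n}$, a routine computation gives
\[
\lim_{N\to\infty}N\ee^{-2s\sqrt n b_N}\,\E\!\left[\ee^{2s\sqrt n X}\ind_{X<b_N-\frac A{\sqrt n}}\right]=\frac{\sqrt2}{2s-\sqrt2}\,\ee^{(\sqrt2-2s)A},
\]
and the convergence is uniform over $w\in\Gamma$ since the right-hand side depends continuously on $s$ alone. Hence $\limsup_{N\to\infty}\E|\Delta_N^A(w)|\le\big(\tfrac{\sqrt2}{2s-\sqrt2}\big)^{1/2}\ee^{(\sqrt2-2s)A/2}$.

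It remains to combine the two steps: on $\Gamma$ we have $s\ge\sigma_K-\delta>1/\sqrt2$, so the prefactor is bounded by $\big(\sqrt2/(2(\sigma_K-\delta)-\sqrt2)\big)^{1/2}$, while for $A>0$ the inequality $\sqrt2-2s\le\sqrt2-2(\sigma_K-\delta)\le1-\sqrt2\,\sigma_K$ (by the choice of $\delta$) gives $\ee^{(\sqrt2-2s)A/2}\le\ee^{(1-\sqrt2\,\sigma_K)A/2}$; plugging this into the display above yields the asserted estimate, with $C$ depending only on $K$. I do not expect a real obstacle here. The two points requiring care are tracking the $A$-dependence faithfully through the saddle-point asymptotics and checking that the $1+o(1)$ error in Lemma~\ref{lem:saddle_point} is uniform over the compact arc $\Gamma$; both are routine. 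The one genuinely important feature is that the $L^2$ estimate must be used rather than a triangle inequality: the crude bound $\E|\Delta_N^A(w)|\le 2N\,\E[\ee^{s\sqrt n(X-b_N)}\ind_{X\le b_N-A/\sqrt n}]$ is of order $\ee^{n(s-\sqrt2)^2/2}$ and blows up when $1/\sqrt2<s<\sqrt2$, so one must exploit the cancellation among the independent summands.
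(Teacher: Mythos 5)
Your proof is correct and follows essentially the same route as the paper: Lemma~\ref{lem:random_analytic_ineq} to pass from the supremum to a contour integral, then a pointwise second-moment bound (exploiting independence and centering of the summands) for $\E|\Delta_N^A(\beta)|$, evaluated via the truncated exponential moment asymptotics. The only cosmetic difference is that you invoke Lemma~\ref{lem:saddle_point} where the paper directly combines Lemma~\ref{lem:trunc_moment_Phi} with \eqref{eq:Phi_asympt}; incidentally, your careful tracking of the $A$-exponent (obtaining $(\sqrt 2-2\sigma)A$ rather than $(1-\sqrt 2\sigma)A$ for the second moment, and then checking this implies the stated bound) is more faithful to the definition of $\Delta_N^A$ than the paper's own computation, which silently replaces the truncation $b_N-A/\sqrt n$ by $b_N-A/b_N$.
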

\begin{proof}
Let $\Gamma$ be a contour enclosing $K$ and located inside $D$. First, $\E
[\Delta_N^A(\beta)]=0$ by definition. Second, uniformly in $\beta\in \Gamma$  it
holds that
\begin{align*}
\E [|\Delta_N^A(\beta)|^2]
&\leq
N \E \left[\ee^{2\sigma\sqrt n (X-b_N)}\ind_{X<b_N-\frac{A}{b_N}} \right]\\
&=
N \ee^{-2\sigma \sqrt n b_N} \ee^{2\sigma^2 n} \Phi\left(b_N-\frac{A}{b_N}-2\sigma \sqrt n\right)\\
&\leq
C \ee^{(1-\sqrt 2 \sigma) A},
\end{align*}
where the second step follows from Lemma~\ref{lem:trunc_moment_Phi} and the last
step follows from~\eqref{eq:Phi_asympt}. By Lemma~\ref{lem:random_analytic_ineq},
we have
$$
\E\left[\sup_{\beta\in K} |\Delta_N^A(\beta)|\right]
\leq
C \oint_{\Gamma} \E |\Delta_N^A(\beta)| |\dd \beta|
\leq
C \ee^{(1-\sqrt 2 \sigma) A/2}.
$$
The proof is complete.
\end{proof}
\begin{proof}[Proof of Theorem~\ref{theo:weak_conv_large_beta}]
By Theorem~\ref{theo:zeta_poi_exists}, we have the weak convergence
$$
\tilde \zeta_P\left(\frac{\beta}{\sqrt 2}; \ee^{\sqrt 2 A}\right)
\overset{d}{\underset{A\to\infty}\longrightarrow}
\tilde \zeta_P\left(\frac{\beta}{\sqrt 2}\right).
$$
Together with Lemmas~\ref{lem:proof_conv_to_zeta_1}
and~\ref{lem:proof_conv_to_zeta_2}, this implies
Theorem~\ref{theo:weak_conv_large_beta} by a standard argument; see for
example~\cite[Lemma~6.7]{kabluchko1}.
\end{proof}

The proof of Theorem~\ref{theo:local_zeros_large_beta} can be completed as
follows. For $\sigma>1/\sqrt 2$, Lemma~\ref{lem:saddle_point} yields
$$
\lim_{N\to\infty} N \ee^{-\beta \sqrt n b_N} \E [\ee^{\beta \sqrt n X}1_{X<b_N}]
=
\begin{cases}
\frac{\sqrt 2}{\beta-\sqrt 2}, & \text{ if } |\sigma|+|\tau|>\sqrt 2,\\
\infty, & \text{ if } |\sigma|+|\tau| \leq \sqrt 2.
\end{cases}
$$
The first equality holds uniformly on compact subsets of $B_2$. By
Theorem~\ref{theo:weak_conv_large_beta}, the process $\ee^{-\beta \sqrt n
b_N}\ZZZ_N(\beta)$ converges to $\zeta_P(\beta/\sqrt 2)$ weakly on the space of
continuous functions on the set $B_2\cap \{\sigma>1/\sqrt 2\}$. Similarly, on
the space of continuous functions on $B_2\cap \{\sigma<-1/\sqrt 2\}$ the same
process converges weakly to an independent copy of $\zeta_P(-\beta/\sqrt 2)$. By
Lemma~\ref{lem:random_analytic_point_proc}, this implies
Theorem~\ref{theo:local_zeros_large_beta}.
\end{proof}

\begin{proof}[Proof of Theorem~\ref{theo:boundary_zeros2}]
Let $d_N'$ be a complex sequence such that
$$
d_N'+ \beta_0 \frac{\log (4\pi n)}{2\sqrt 2}-i\tau_0^2 n\in 2\pi i \Z
\;\;\;
\text{and}
\;\;\;
d_N'=O(\log n).
$$
Write
$\beta_N=\beta_0+\frac{s+d_N'}{(\beta_0-\sqrt 2)n}$, where $s\in\C$ is a new
variable. Note that $\lim_{N\to\infty}\beta_N=\beta_0$.
Let $X\sim N_{\R}(0,1)$. Applying Remark~\ref{rem:saddle} and noting that the second
term on the right-hand side in~\eqref{eq:lem_saddle_complex_two_terms} converges
to $\frac{\sqrt 2}{\beta_0-\sqrt 2}$, we obtain
\begin{align*}
\lim_{N\to\infty} N \ee^{-\beta_N \sqrt n b_N} \E [ \ee^{\beta_N \sqrt n X} \ind_{X<b_N}]
&=
\lim_{N\to\infty} N \ee^{-\beta_N \sqrt n b_N} \ee^{\frac{\beta_N^2 n} 2 }+\frac{\sqrt 2}{\beta_0-\sqrt 2}\\
&=
\ee^{s} + \frac{\sqrt 2}{\beta_0-\sqrt 2}.
\end{align*}
By
Theorem~\ref{theo:weak_conv_large_beta} and~\eqref{eq:zeta_zeta_tilde}, the following holds weakly on $C(\C)$:
$$
\ee^{-\beta_N \sqrt n b_N} \ZZZ_N\left( \beta_0+\frac{s+d_N'}{(\beta_0-\sqrt 2)n} \right)
\toweak
\ee^{s} + \zeta_P\left(\frac{\beta_0}{\sqrt 2}\right).
$$
The zeros of the right-hand side are located  at $s=2\pi i k + \eta$, $k\in\Z$,
where $\eta=\log (-\zeta_P(\beta_0/\sqrt 2))$. Define $d_N=d_N'/(\sqrt 2
\tau_0)$.  The theorem follows from Lemma~\ref{lem:random_analytic_point_proc}
after elementary transformations.
\end{proof}

\subsection{Proof of Proposition~\ref{prop:zeta_boundary}} Let $\tau\neq 0$ be
fixed.  Let $S(\beta)$ be a random variable defined as in the proof of
Theorem~\ref{theo:zeta_poi_exists}. Take $a,b\in\R$. For $\sigma>1/2$, consider
a random variable
$$
Y(\sigma)
=
a\Re S(\beta)+b \Im S(\beta)
=
\lim_{k\to\infty} \left(\sum_{j=1}^{\infty} f(P_j;\sigma) \ind_{1\leq P_j<k} -\int_{1}^{k} f(t;\sigma) \dd t\right),
$$
where $f(t;\sigma)=\sqrt {a^2+b^2} t^{-\sigma} \cos (\tau \log t - \theta)$ and  $\theta\in\R$ is such that $\cos \theta = \frac {a}{\sqrt{a^2+b^2}}$ and $\sin \theta =  \frac {b}{\sqrt{a^2+b^2}}$.

We need to show that $\sqrt{2\sigma-1}\, Y(\sigma)$ converges, as
$\sigma\downarrow 1/2$, to a centered real Gaussian distribution with variance
$(a^2+b^2)/2$. By the properties of the Poisson process, the log-characteristic
function of $Y(\sigma)$ is given by
$$
\log \E \ee^{iz Y(\sigma)}
=
\int_{1}^{\infty}\left(\ee^{izf(t;\sigma)}-1-izf(t;\sigma)+\frac {z^2}2 f^2(t;\sigma)\right)\dd t
-
\frac {z^2} 2 \int_{1}^{\infty}  f^2(t;\sigma)\dd t.
$$
We will compute the second term and show that the first term is negligible. By elementary integration, we have
\begin{align}
\int_{1}^{\infty} f^2(t;\sigma)\dd t
&=
\frac {a^2+b^2}2 \int_{1}^{\infty} \frac{1+\cos(2\tau\log t-2\theta)}{t^{2\sigma}}\dd t \label{eq:proof_zeta_bound1} \\
&=
\frac {a^2+b^2}2 \left( \frac{1}{2\sigma-1} - \Re\frac{\ee^{-2\theta i}}{(1-2\sigma)+2 i \tau} \right). \notag
\end{align}
Using the inequalities $|\ee^{ix}-1-ix+\frac {x^2}2|\leq |x|^3$ and $|f(t;\sigma)|< C t^{-\sigma}$, we obtain
\begin{equation} \label{eq:proof_zeta_bound2}
\left|\int_{1}^{\infty}\left(\ee^{izf(t;\sigma)}-1-izf(t;\sigma)+\frac {z^2}2 f^2(t;\sigma)\right)\dd t\right|
\leq
\frac{C}{3\sigma-1} |z|^3.
\end{equation}
Bringing~\eqref{eq:proof_zeta_bound1} and~\eqref{eq:proof_zeta_bound2} together and recalling that $\tau\neq 0$, we arrive at
\begin{equation} \label{eq:proof_zeta_bound3}
\lim_{\sigma\downarrow 1/2} \log \E \ee^{i \sqrt {2\sigma-1}\, z Y(\sigma)}
=
-\frac 14 (a^2+b^2) z^2.
\end{equation}
This proves the result for $\tau\neq 0$. For $\tau=0$, the limit
is~\eqref{eq:proof_zeta_bound3} is $-a^2z^2/2$. \hfill $\Box$

 \subsection*{Acknowledgments.} We are grateful to Mikhail~Sodin and Avner~Kiro
for pointing out a mistake in the first version of the paper. We also thank the
unknown referee for useful comments, as well as Dmitry~Zaporozhets for useful
discussions. AK thanks the Institute of Stochastics of Ulm University for kind
hospitality.

\bibliographystyle{plainnat}

\begin{thebibliography}{44}
\providecommand{\natexlab}[1]{#1}
\providecommand{\url}[1]{\texttt{#1}}
\expandafter\ifx\csname urlstyle\endcsname\relax
  \providecommand{\doi}[1]{doi: #1}\else
  \providecommand{\doi}{doi: \begingroup \urlstyle{rm}\Url}\fi

\bibitem[Abramowitz and Stegun(1964)]{abramowitz_stegun}
M.~Abramowitz and I.~Stegun.
\newblock \emph{Handbook of mathematical functions with formulas, graphs, and
  mathematical tables}, volume~55 of \emph{National Bureau of Standards Applied
  Mathematics Series}.
\newblock U.S.~Government Printing Office, Washington, 1964.

\bibitem[Ben~Arous et~al.(2005)Ben~Arous, Bogachev, and
  Molchanov]{ben_arous_bogachev_molchanov}
G.~Ben~Arous, L.~Bogachev, and S.~Molchanov.
\newblock Limit theorems for sums of random exponentials.
\newblock \emph{Probab.~Theory Related Fields}, 132\penalty0 (4):\penalty0
  579--612, 2005.

\bibitem[Bena et~al.(2005)Bena, Droz, and Lipowski]{BenaEtAl2005}
I.~Bena, M.~Droz, and A.~Lipowski.
\newblock {Statistical mechanics of equilibrium and nonequilibrium phase
  transitions: The Yang--Lee formalism}.
\newblock \emph{Int.~J.~Mod.~Phys.~B}, 19\penalty0 (29):\penalty0 4269--4329,
  2005.

\bibitem[Biggins(1992)]{biggins}
J.D. Biggins.
\newblock {Uniform convergence of martingales in the branching random walk.}
\newblock \emph{Ann.~Probab.}, 20\penalty0 (1):\penalty0 137--151, 1992.

\bibitem[Biskup et~al.(2000)Biskup, Borgs, Chayes, Kleinwaks, and
  Koteck{\'y}]{BiskupEtAl2000}
M.~Biskup, C.~Borgs, J.T. Chayes, L.J. Kleinwaks, and R.~Koteck{\'y}.
\newblock {General theory of Lee--Yang zeros in models with first-order phase
  transitions}.
\newblock \emph{Phys.~Rev.~Lett.}, 84\penalty0 (21):\penalty0 4794--4797, 2000.

\bibitem[Bogachev(2006)]{bogachev_weibull}
L.~Bogachev.
\newblock Limit laws for norms of {IID} samples with {W}eibull tails.
\newblock \emph{J.~Theoret.~Probab.}, 19\penalty0 (4):\penalty0 849--873, 2006.

\bibitem[Bovier(2006)]{BovierBook2006}
A.~Bovier.
\newblock \emph{Statistical mechanics of disordered systems. A mathematical
  perspective}.
\newblock Cambridge Series in Statistical and Probabilistic Mathematics.
  Cambridge University Press, Cambridge, 2006.

\bibitem[Bovier et~al.(2002)Bovier, Kurkova, and
  L{\"o}we]{bovier_kurkova_loewe}
A.~Bovier, I.~Kurkova, and M.~L{\"o}we.
\newblock Fluctuations of the free energy in the {REM} and the {$p$}-spin {SK}
  models.
\newblock \emph{Ann.~Probab.}, 30\penalty0 (2):\penalty0 605--651, 2002.

\bibitem[Cranston and Molchanov(2005)]{cranston_molchanov}
M.~Cranston and S.~Molchanov.
\newblock Limit laws for sums of products of exponentials of iid random
  variables.
\newblock \emph{Israel J.~Math.}, 148:\penalty0 115--136, 2005.

\bibitem[Cs{\"o}rg{\H{o}}(1981)]{csoergoe}
S.~Cs{\"o}rg{\H{o}}.
\newblock {Limit behaviour of the empirical characteristic function.}
\newblock \emph{Ann.~Probab.}, 9:\penalty0 130--144, 1981.

\bibitem[Derrida(1980)]{Derrida1980}
B.~Derrida.
\newblock Random-energy model: Limit of a family of disordered models.
\newblock \emph{Phys.~Rev.~Lett.}, 45:\penalty0 79--82, Jul 1980.

\bibitem[Derrida(1981)]{Derrida1981}
B.~Derrida.
\newblock Random-energy model: An exactly solvable model of disordered systems.
\newblock \emph{Phys.~Rev.~B}, 24\penalty0 (5):\penalty0 2613--2626, 1981.

\bibitem[Derrida(1991)]{DerridaComplexREM1991}
B.~Derrida.
\newblock The zeroes of the partition function of the random energy model.
\newblock \emph{Physica A: Stat.~Mech.~Appl.}, 177\penalty0 (1--3):\penalty0 31
  -- 37, 1991.

\bibitem[Derrida et~al.(1993)Derrida, Evans, and Speer]{DerridaEvansSpeer1993}
B.~Derrida, M.~Evans, and E.~Speer.
\newblock Mean field theory of directed polymers with random complex weights.
\newblock \emph{Comm.~Math.~Phys.}, 156\penalty0 (2):\penalty0 221--244, 1993.

\bibitem[Dobrinevski et~al.(2011)Dobrinevski, Le~Doussal, and
  Wiese]{DobrinevskiLeDoussalWiese2011}
A.~Dobrinevski, P.~Le~Doussal, and K.J. Wiese.
\newblock Interference in disordered systems: A particle in a complex random
  landscape.
\newblock \emph{Phys.~Rev.~E}, 83\penalty0 (6):\penalty0 061116, 2011.

\bibitem[D{\"u}ring and Kurchan(2010)]{DuringKurchan2010}
G.~D{\"u}ring and J.~Kurchan.
\newblock {Statistical mechanics of Monte Carlo sampling and the sign problem}.
\newblock \emph{Europhys.~Lett.}, 92\penalty0 (5):\penalty0 50004, 2010.

\bibitem[Eisele(1983)]{eisele}
T.~Eisele.
\newblock On a third-order phase transition.
\newblock \emph{Comm.~Math.~Phys.}, 90\penalty0 (1):\penalty0 125--159, 1983.

\bibitem[Feuerverger and Mureika(1977)]{feuerverger_mureika}
A.~Feuerverger and R.~Mureika.
\newblock {The empirical characteristic function and its applications.}
\newblock \emph{Ann.~Stat.}, 5:\penalty0 88--97, 1977.

\bibitem[Gnedenko and Kolmogorov(1968)]{gnedenko_kolmogorov_book}
B.~V. Gnedenko and A.~N. Kolmogorov.
\newblock \emph{Limit distributions for sums of independent random variables}.
\newblock Translated from the Russian, annotated, and revised by K. L. Chung.
  Addison-Wesley Publishing Co., 1968.

\bibitem[Heathcote and H{\"u}sler(1990)]{heathcote_huesler}
C.~Heathcote and J.~H{\"u}sler.
\newblock {The first zero of an empirical characteristic function.}
\newblock \emph{Stochastic Processes Appl.}, 35\penalty0 (2):\penalty0
  347--360, 1990.

\bibitem[Hough et~al.(2009)Hough, Krishnapur, Peres, and
  Vir{\'a}g]{peres_etal_book}
B.~Hough, M.~Krishnapur, Y.~Peres, and B.~Vir{\'a}g.
\newblock \emph{Zeros of {G}aussian analytic functions and determinantal point
  processes}, volume~51 of \emph{University Lecture Series}.
\newblock American Mathematical Society, Providence, RI, 2009.

\bibitem[H{\"u}sler(1989)]{huesler}
J.~H{\"u}sler.
\newblock {First zeros of empirical characteristic functions and extreme values
  of Gaussian processes.}
\newblock In \emph{Statistical data analysis and inference. Invited papers
  presented at the international conference, held in Neuch\^atel, Switzerland,
  1989, in honor of C. R. Rao}, pages 177--182. North-Holland, 1989.

\bibitem[Ikeda(1963)]{ikeda}
S.~Ikeda.
\newblock Asymptotic equivalence of probability distributions with applications
  to some problems of asymptotic independence.
\newblock \emph{Ann.~Inst.~Statist.~Math.}, 15:\penalty0 87--116, 1963.

\bibitem[It{\^o} and Nisio(1968)]{ito_nisio}
K.~It{\^o} and M.~Nisio.
\newblock On the convergence of sums of independent {B}anach space valued
  random variables.
\newblock \emph{Osaka J.~Math.}, 5:\penalty0 35--48, 1968.

\bibitem[Jan{\ss}en(2010)]{janssen}
A.~Jan{\ss}en.
\newblock Limit laws for power sums and norms of i.i.d.\ samples.
\newblock \emph{Probab.~Theory Related Fields}, 146\penalty0 (3-4):\penalty0
  515--533, 2010.

\bibitem[Kabluchko(2011)]{kabluchko1}
Z.~Kabluchko.
\newblock Functional limit theorems for sums of independent geometric
  {B}rownian motions.
\newblock \emph{Bernoulli}, 17\penalty0 (3):\penalty0 942--968, 2011.

\bibitem[Kabluchko(2012)]{kabluchko12}
Z.~Kabluchko.
\newblock Critical points of random polynomials with independent identically
  distributed roots.
\newblock \emph{Submitted}, 2012.
\newblock Preprint available at http://arxiv.org/abs/1206.6692.

\bibitem[Kabluchko and Zaporozhets(2012)]{kabluchko_zaporozhets}
Z.~Kabluchko and D.~Zaporozhets.
\newblock Universality for zeros of random analytic functions.
\newblock \emph{Submitted}, 2012.
\newblock Preprint available at http://arxiv.org/abs/1205.5355.

\bibitem[Koukiou(1993)]{koukiou}
F.~Koukiou.
\newblock {Analyticity of the partition function of the random energy model.}
\newblock \emph{J.~Phys.~A, Math.~Gen.}, 26\penalty0 (23):\penalty0
  L1207--L1210, 1993.

\bibitem[Lee and Yang(1952)]{lee_yang2}
T.~D. Lee and C.~N. Yang.
\newblock Statistical theory of equations of state and phase transitions. {II}.
  {L}attice gas and {I}sing model.
\newblock \emph{Physical Rev.~(2)}, 87:\penalty0 410--419, 1952.

\bibitem[Meerschaert and Scheffler(2001)]{meerschaert_book}
M.~Meerschaert and H.-P. Scheffler.
\newblock \emph{{Limit distributions for sums of independent random vectors.
  Heavy tails in theory and practice.}}
\newblock {Chichester: Wiley}, 2001.

\bibitem[Moukarzel and Parga(1991)]{moukarzel_parga}
C.~Moukarzel and N.~Parga.
\newblock Numerical complex zeros of the random energy model.
\newblock \emph{Physica A: Stat.~Mech.~Appl.}, 177\penalty0 (1--3):\penalty0 24
  -- 30, 1991.

\bibitem[Moukarzel and Parga(1992{\natexlab{a}})]{moukarzel_parga_analytic}
C.~Moukarzel and N.~Parga.
\newblock Analytic determination of the complex field zeros of {REM}.
\newblock \emph{J.~Phys.~I France}, 2\penalty0 (3):\penalty0 251--261,
  1992{\natexlab{a}}.

\bibitem[Moukarzel and Parga(1992{\natexlab{b}})]{moukarzel_parga_magnetic}
C.~Moukarzel and N.~Parga.
\newblock The {REM} zeros in the complex temperature and magnetic field planes.
\newblock \emph{Physica A: Stat.~Mech.~Appl.}, 185\penalty0 (1--4):\penalty0
  305--315, 1992{\natexlab{b}}.

\bibitem[Nazarov and Sodin(2010)]{nazarov_sodin}
F.~Nazarov and M.~Sodin.
\newblock What is {$\ldots$} a {G}aussian entire function?
\newblock \emph{Notices Amer.~Math.~Soc.}, 57\penalty0 (3):\penalty0 375--377,
  2010.

\bibitem[Obuchi and Takahashi(2012)]{obuchi_takahashi}
T.~Obuchi and K.~Takahashi.
\newblock Partition-function zeros of spherical spin glasses and their
  relevance to chaos.
\newblock \emph{J.~Phys.~A}, 45\penalty0 (12):\penalty0 125003, 2012.

\bibitem[Olivieri and Picco(1984)]{olivieri_picco}
E.~Olivieri and P.~Picco.
\newblock On the existence of thermodynamics for the random energy model.
\newblock \emph{Comm.~Math.~Phys.}, 96\penalty0 (1):\penalty0 125--144, 1984.

\bibitem[Petrov(1995)]{petrov_book}
V.~V. Petrov.
\newblock \emph{Limit theorems of probability theory. Sequences of independent
  random variables}, volume~4 of \emph{Oxford Studies in Probability}.
\newblock Oxford University Press, New York, 1995.

\bibitem[Resnick(1987)]{resnick_book}
S.~Resnick.
\newblock \emph{Extreme values, regular variation, and point processes},
  volume~4 of \emph{Applied Probability.}
\newblock Springer-Verlag, New York, 1987.

\bibitem[Rva{\v{c}}eva(1962)]{rvaceva}
E.~L. Rva{\v{c}}eva.
\newblock On domains of attraction of multi-dimensional distributions.
\newblock In \emph{Select.~{T}ransl.~{M}ath.~{S}tatist.~and {P}robability,
  {V}ol.~2}, pages 183--205. American Mathematical Society, 1962.

\bibitem[Samorodnitsky and Taqqu(1994)]{samorodnitsky_taqqu_book}
G.~Samorodnitsky and M.~Taqqu.
\newblock \emph{Stable non-{G}aussian random processes: Stochastic models with
  infinite variance}.
\newblock Stochastic Modeling. Chapman \& Hall, New York, 1994.

\bibitem[Takahashi(2011)]{takahashi}
K~Takahashi.
\newblock Replica analysis of partition-function zeros in spin-glass models.
\newblock \emph{J.~Phys.~A}, 44\penalty0 (23):\penalty0 235001, 2011.

\bibitem[Talagrand(2011)]{talagrand_book2}
M.~Talagrand.
\newblock \emph{Mean field models for spin glasses. {V}olume {I}}, volume~54 of
  \emph{Ergebnisse der Mathematik und ihrer Grenzgebiete. 3.~Folge.}
\newblock Springer-Verlag, Berlin, 2011.

\bibitem[Yang and Lee(1952)]{lee_yang1}
C.~N. Yang and T.~D. Lee.
\newblock Statistical theory of equations of state and phase transitions. {I}.
  {T}heory of condensation.
\newblock \emph{Physical Rev.~(2)}, 87:\penalty0 404--409, 1952.

\end{thebibliography}

\end{document}